\documentclass{amsart}

\usepackage{graphicx}
\usepackage{wrapfig}
\usepackage{extarrows} 
\usepackage{relsize}
\usepackage{tikz}
\usetikzlibrary{cd}
\usepackage{amsthm} 
\usepackage{amsfonts}
\usepackage{amssymb,latexsym, mathrsfs}
\usepackage{amsmath, mathtools}  
\usepackage{verbatim,enumerate}
\usepackage{xcolor}
\usepackage{hyperref}
\usepackage[utf8]{inputenc}
\usepackage[noabbrev,capitalise]{cleveref}

\numberwithin{equation}{section}

\theoremstyle{plain}
\newtheorem{theorem}{Theorem}[section]

\newtheorem*{theorem*}{Theorem}

\newtheorem{lemma}[theorem]{Lemma}
\newtheorem*{lemma*}{Lemma}
\newtheorem{proposition}[theorem]{Proposition}
\newtheorem*{proposition*}{Proposition}
\newtheorem{corollary}[theorem]{Corollary}
\newtheorem*{corollary*}{Corollary}

\newtheorem*{theoreme*}{Théorème}
\newtheorem*{lemme*}{Lemme}
\newtheorem*{corollaire*}{Corollaire}
\newtheorem*{definitionf*}{Définition}

\theoremstyle{definition} 
\newtheorem{definition}[theorem]{Definition}
\newtheorem*{definition*}{Definition}
\newtheorem{example}[theorem]{Example}
\newtheorem{remark}[theorem]{Remark}

\DeclareMathOperator{\En}{End}
\DeclareMathOperator{\EEn}{\mathcal{E}\hspace*{-0,5mm}\text{nd}}
\DeclareMathOperator{\Aut}{Aut}

\DeclareMathOperator{\Mod}{mod}
\DeclareMathOperator{\proj}{proj}
\DeclareMathOperator{\per}{per}
\DeclareMathOperator{\pero}{per^{[-1,0]}}
\DeclareMathOperator{\inj}{inj}
\DeclareMathOperator{\Id}{1}
\DeclareMathOperator{\Ker}{Ker}

\DeclareMathOperator{\Imm}{Im}
\DeclareMathOperator{\Hom}{Hom}
\DeclareMathOperator{\cHom}{\mathcal{H}om}

\DeclareMathOperator{\Cone}{Cone}
\DeclareMathOperator{\CCone}{Cocone}
\DeclareMathOperator{\EE}{\mathbb{E}}
\DeclareMathOperator{\add}{add}

\DeclareMathOperator{\thi}{thick}
\DeclareMathOperator{\Filt}{Filt}


\def\K{\ensuremath{\mathcal{K} }}

\newcommand{\X}{\mathcal{X}}
\newcommand{\Y}{\mathcal{Y}}
\newcommand{\Tt}{\mathscr{T}}
\newcommand{\Ww}{\mathscr{W}}
\newcommand{\T}{\mathcal{T}}
\newcommand{\F}{\mathcal{F}}
\newcommand{\U}{\mathcal{U}}
\newcommand{\W}{\mathcal{W}}
\newcommand{\C}{\mathcal{C}}
\newcommand{\D}{\mathcal{D}}
\newcommand{\Z}{\mathcal{Z}}
\newcommand{\HH}{\mathcal{H}}
\newcommand{\J}{\mathcal{J}}
\newcommand{\Ss}{\mathcal{S}}

\newcommand{\Kb}{\mathcal{K}^{b}(\proj \Lambda)}
\newcommand{\Kproj}{\mathcal{K}^{[-1,0]}(\proj \Lambda)}
\newcommand{\scon}[5]{\begin{tikzcd}[column sep = small, ampersand replacement=\&] #1 \arrow[r, rightarrowtail, "#2"] \& #3 \arrow[r, twoheadrightarrow, "#4"] \& #5 \end{tikzcd}}
\newcommand{\con}[6]{\begin{tikzcd}[column sep = small, ampersand replacement=\&] #1 \arrow[r, rightarrowtail, "#2"] \& #3 \arrow[r, twoheadrightarrow, "#4"] \& #5 \arrow[r, dashrightarrow, "#6"] \& {\!} \end{tikzcd}}
\newcommand{\defl}[3]{\begin{tikzcd}[column sep = small, ampersand replacement=\&] #1 \arrow[r, twoheadrightarrow, "#2"] \& #3 \end{tikzcd}}
\newcommand{\infl}[3]{\begin{tikzcd}[column sep = small, ampersand replacement=\&] #1 \arrow[r, rightarrowtail, "#2"] \& #3 \end{tikzcd}}
\newcommand{\complex}[3]{\begin{tikzcd}[sep=small, cramped, ampersand replacement=\&] #1 \arrow[d, "#2"] \\ #3 \end{tikzcd}}

\DeclareMathOperator{\ctor}{cotor}
\DeclareMathOperator{\cctor}{c-cotor}
\DeclareMathOperator{\silt}{silt}
\DeclareMathOperator{\tautilt}{s\tau-tilt}

\DeclareMathOperator{\wide}{wide}

\DeclareMathOperator{\fwide}{l-wide}

\DeclareMathOperator{\ftor}{f-tors}
\DeclareMathOperator{\tor}{tors}

\DeclareMathOperator{\fthi}{inj-thick}
\DeclareMathOperator{\smc}{smc}
\DeclareMathOperator{\tsmc}{2-smc}
\DeclareMathOperator{\tsilt}{2-silt}

\usepackage{array}
\usetikzlibrary{calc}
\usetikzlibrary{shapes}
\usepackage{tabularx}
\usepackage{longtable}
\pgfdeclarelayer{bg}
\pgfsetlayers{bg,main}

\begin{document}
\title{On \lowercase{g}-finiteness in the category of projective presentations}
\author{Monica Garcia}
\address{Laboratoire de Mathématiques de Versailles, UVSQ, CNRS, Université Paris-Saclay. Office 3305, Bâtiment Fermat, 45 Avenue des États-Unis, 78035 Versailles CEDEX, France}
\email{monica.garcia@uvsq.fr}
%
\begin{abstract}
	
	\smallskip
	
	We provide new equivalent conditions for an algebra $\Lambda$ to be $g$-finite, analogous to those established by L.~Demonet, O.~Iyama, and G.~Jasso, but within the category of projective presentations $\Kproj$. We show that an algebra has finitely many isomorphism classes of basic $2$-term silting objects if and only if all cotorsion pairs in $\Kproj$ are complete. Furthermore, we establish that this criterion is also equivalent to all thick subcategories in $\Kproj$ having enough injective and projective objects.\\
	\noindent \textbf{Keywords.} Projective presentation, g-finite algebra, thick subcategory, cotorsion pair, silting object. 
\end{abstract}

\maketitle

\tableofcontents
	
\section{Introduction}

The notion of support $\tau$-tilting module \cite{adachi2014tilting} was inspired in part by the additive categorification of cluster algebras (see \cite{amiot2009cluster, buan2006tilting} or survey papers \cite{keller2019algebres, amiot2011generalized}). When $\Lambda$ is the Jacobi algebra $\mathcal{J}(Q,W)$ associated with a quiver $Q$ with non-degenerate potential $W$, the set of reachable basic 2-term silting complexes is in bijection with the set of clusters of the cluster algebra $\mathcal{A}_Q$ associated to $(Q,W)$. One of the first questions to be settled when cluster algebras were introduced is whether the set of clusters of a given cluster algebra $\mathcal{A}_Q$ is finite. This turned out to be equivalent to $Q$ being mutation-equivalent to a quiver of Dynkin type \cite{fomin2003finite}. Since Dynkin quivers are representation finite, $\K^{[-1,0]}(\proj J(Q,W))$ has only finitely many isoclasses of indecomposable objects and thus finitely many isoclasses of basic $2$-term silting objects. For a general finite-dimensional $\Bbbk$-algebra $\Lambda$, this does not have to be the case, prompting the introduction of the following definition.
 
\begin{definition}\cite{demonet2019tilting}
	Let $\Lambda$ be a finite-dimensional $\Bbbk$-algebra. We say that $\Lambda$ is \textit{$g$-finite} if it admits only finitely many isomorphism classes of basic $2$-term silting objects. 
\end{definition}

The study of $g$-finite algebras was introduced by L.~Demonet, O.~Iyama and G.~Jasso in \cite{demonet2019tilting}, who showed that an algebra $\Lambda$ being $g$-finite has deep implications on the structure of $\Mod \Lambda$. 

\begin{theorem}\cite[Theorems 3.8 and 4.2]{demonet2019tilting}\label{demonet_finite} Let $\Lambda$ be a finite-dimensional $\Bbbk$-algebra. The following are equivalent: 
	\begin{enumerate}
		\item $\Lambda$ is $g$-finite. 
		\item There exist finitely many functorially finite torsion classes in $\Mod \Lambda$.
		\item All torsion classes in $\Mod \Lambda$ are functorially finite.
		\item There exist finitely many isomorphism classes of bricks in $\Mod \Lambda$.  
	\end{enumerate}
\end{theorem}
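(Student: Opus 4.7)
The plan is to establish the four equivalences through bijective correspondences, combined with the lattice-theoretic structure of $\tors \Lambda$ and its brick labeling.

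For $(1) \Leftrightarrow (2)$, I would invoke the Adachi--Iyama--Reiten bijection: sending a basic 2-term silting complex $P^{-1} \to P^0$ in $\K^b(\proj \Lambda)$ to $\Fac(H^0(P^\bullet))$ gives a one-to-one correspondence between basic 2-term silting complexes and functorially finite torsion classes. Finiteness on one side is then tautologically equivalent to finiteness on the other, so the problem reduces to statements about $\tors \Lambda$.

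The remaining implications I would handle through the lattice $\tors \Lambda$, which is complete and semidistributive, and whose Hasse quiver admits a canonical brick labeling: every cover relation $\T' \lessdot \T$ is assigned a single brick in $\T \cap {}^\perp \T'$, and this labeling is injective on arrows sharing a common source or target. From this dictionary I would extract three ingredients. First, completely join-irreducible torsion classes are always functorially finite and are in bijection with bricks, so $(2) \Rightarrow (4)$ follows by noting that only finitely many join-irreducibles can lie below a finite family of f.f.\ torsion classes. Second, for $(3) \Rightarrow (2)$, if every torsion class is f.f., then semidistributivity and the injectivity of the brick labeling force the set of covers out of any given vertex to be finite; a further compactness argument, bounding arbitrary joins by finite ones under the f.f.\ hypothesis, then upgrades this to finiteness of the entire lattice. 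Third, $(2) \Rightarrow (3)$ proceeds by observing that with only finitely many f.f.\ torsion classes, every torsion class is sandwiched between consecutive f.f.\ ones, and the brick labeling forces the sandwich to collapse.

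The main obstacle I expect is the implication $(4) \Rightarrow (1)$: brick-finiteness has no a priori approximation-theoretic content, so it takes genuine work to convert it into silting-finiteness. My plan would be to use the brick labeling and semidistributivity to express every f.f.\ torsion class as a join of completely join-irreducible classes, each labeled by a unique brick, and then to bound the number of indecomposable $\tau$-rigid summands arising via mutation of support $\tau$-tilting modules by the total number of available bricks. This conversion from a combinatorial bound on bricks to a structural bound on silting objects is the deepest step, and is the core of the Demonet--Iyama--Jasso argument.
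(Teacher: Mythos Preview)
The paper does not prove this theorem. It is stated with an explicit citation to \cite[Theorems 3.8 and 4.2]{demonet2019tilting} and is used throughout as a known input from the literature; no argument for it appears anywhere in the text. Consequently there is no ``paper's own proof'' against which your proposal can be compared.

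A brief remark on your sketch itself: the overall strategy is plausible, but parts of it are anachronistic relative to the cited source. The systematic use of the brick labeling of the Hasse quiver of $\tor \Lambda$ and the completely semidistributive lattice structure is developed in later work (Demonet--Iyama--Reading--Reiten--Thomas), not in the original Demonet--Iyama--Jasso paper. The arguments in \cite{demonet2019tilting} for $(2)\Rightarrow(3)$ proceed more directly: one shows that if some torsion class is not functorially finite, then iterated mutation of support $\tau$-tilting pairs produces an infinite strictly increasing chain of functorially finite torsion classes. Your handling of $(4)\Rightarrow(1)$ is also vaguer than the actual proof, which constructs from any non-$\tau$-rigid module a one-parameter family of bricks. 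If you intend this as a self-contained write-up rather than a pointer to the literature, those two implications would need to be made substantially more concrete.
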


Functorially finite torsion classes are in one-to-one correspondence with \textit{left finite wide subcategories} \cite{marks2017torsion, ingalls2009noncrossing}. Since left finite wide subcategories are defined as those such the smallest torsion class containing them is functorially finite, the following corollary is immediate. 

\begin{corollary}\label{coro_wide_lf}
	 Let $\Lambda$ be a finite-dimensional $\Bbbk$-algebra. If $\Lambda$ is $g$-finite, then all wide subcategories are left finite. 
\end{corollary}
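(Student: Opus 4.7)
The proof plan is essentially a one-step unwinding of definitions, leveraging the machinery already set up in the excerpt. First I would recall the definition of a left finite wide subcategory: a wide subcategory $\W \subseteq \Mod \Lambda$ is \emph{left finite} precisely when the smallest torsion class $\mathrm{T}(\W)$ containing $\W$ is functorially finite in $\Mod \Lambda$. This definition is what makes the correspondence of Marks--\v{S}\v{t}ov\'i\v{c}ek and Ingalls--Thomas between functorially finite torsion classes and left finite wide subcategories tautological on the wide side.

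Assuming $\Lambda$ is $g$-finite, I would then invoke \Cref{demonet_finite}, specifically the implication (1)$\Rightarrow$(3), to conclude that \emph{every} torsion class in $\Mod \Lambda$ is functorially finite. In particular, for any wide subcategory $\W$, the torsion class $\mathrm{T}(\W)$ is functorially finite, which by definition means $\W$ is left finite. This gives the corollary directly, with no further combinatorial or homological input needed.

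There is really no obstacle here; the only thing to verify is that the definition of ``left finite'' being used is indeed the one that demands functorial finiteness of the associated torsion class (and not, say, functorial finiteness of $\W$ itself). Since the paragraph preceding the corollary already states this convention explicitly, the argument is immediate, and the corollary serves mainly as a motivating observation for what follows.
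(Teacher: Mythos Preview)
Your proposal is correct and matches the paper's approach exactly: the paper simply states that the corollary is immediate from the definition of left finite wide subcategory together with \cref{demonet_finite}, and your argument spells out precisely this implication.
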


In this article, we study the consequences that being $g$-finite has on the correspondences between complete cotorsion pairs, $2$-term silting objects and thick subcategories with enough injectives in the extriangulated category $\Kproj$ introduced in \cite{garcia2023thick}. Specifically, we provide analogs of \cref{demonet_finite} and \cref{coro_wide_lf}. 

\begin{theorem}\label{monica_finite}
	Let $\Lambda$ be a finite-dimensional $\Bbbk$-algebra. The following are equivalent: 
	\begin{enumerate}
		\item $\Lambda$ is $g$-finite.
		\item There exist finitely many complete cotorsion pairs in $\Kproj$.
		\item All cotorsion pairs in $\Kproj$ are complete. 
		\item There exist finitely many thick subcategories in $\Kproj$.
	\end{enumerate}
\end{theorem}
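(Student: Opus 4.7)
The plan is to prove the theorem via two chains of equivalences, $(1) \Leftrightarrow (2) \Leftrightarrow (3)$ and $(1) \Leftrightarrow (4)$, using \cref{demonet_finite} as a black box and transferring the relevant finiteness data between $\Kproj$ and $\Mod \Lambda$ through three bijections.

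First, I would set up (or import from \cite{garcia2023thick}) the following correspondences. (i) Basic $2$-term silting objects $T$ in $\Kproj$ correspond bijectively to complete cotorsion pairs in $\Kproj$, through $T \mapsto (\add T, T^\perp)$ in an appropriate sense. (ii) Arbitrary cotorsion pairs in $\Kproj$ correspond bijectively to torsion classes in $\Mod \Lambda$, with complete cotorsion pairs matching precisely the functorially finite ones; this refines the classical bijection between support $\tau$-tilting modules and functorially finite torsion classes. (iii) Thick subcategories of $\Kproj$ correspond bijectively to wide subcategories of $\Mod \Lambda$. Granting these, $(1) \Leftrightarrow (2)$ is immediate from (i). The equivalence $(1) \Leftrightarrow (3)$ follows by combining (ii) with \cref{demonet_finite}: $\Lambda$ is $g$-finite iff every torsion class in $\Mod \Lambda$ is functorially finite, iff every cotorsion pair in $\Kproj$ is complete. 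The implication $(3) \Rightarrow (2)$ can then be obtained by combining $(3) \Rightarrow (1)$ with $(1) \Rightarrow (2)$.

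For $(1) \Leftrightarrow (4)$, the strategy is to route through \cref{demonet_finite}(4). If $\Lambda$ is $g$-finite, there are finitely many bricks, hence finitely many semibricks, and by Ringel's bijection between semibricks and wide subcategories, finitely many wide subcategories of $\Mod \Lambda$. Applying (iii), this produces finitely many thick subcategories of $\Kproj$. Conversely, if (4) holds, correspondence (iii) forces finitely many wide subcategories in $\Mod \Lambda$, and in particular finitely many left finite wide subcategories. Those are in bijection with functorially finite torsion classes in $\Mod \Lambda$, and hence with basic $2$-term silting objects, yielding $(4) \Rightarrow (1)$.

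The main obstacle is the refined correspondence (ii). The bijection between \emph{complete} cotorsion pairs and functorially finite torsion classes is essentially the silting-$\tau$-tilting dictionary, but extending it to arbitrary cotorsion pairs and arbitrary torsion classes, with completeness on one side aligning on the nose with functorial finiteness on the other, requires a careful analysis of approximations in the extriangulated structure of $\Kproj$. Checking that (iii) transports thick subcategories without loss is a similarly delicate point, since in the extriangulated setting thick subcategories need not behave as classically in triangulated categories. Once (ii) and (iii) are established, the rest of the proof is bookkeeping around \cref{demonet_finite}.
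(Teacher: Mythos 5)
Your treatment of $(1)\Leftrightarrow(2)\Leftrightarrow(3)$ matches the paper's: the key input is exactly the extension of the Pauksztello--Zvonareva correspondence to a bijection between \emph{all} cotorsion pairs in $\Kproj$ and \emph{all} torsion classes in $\Mod\Lambda$ (\cref{cotortorbij}), which the paper obtains by explicitly computing ${}^{\perp_1}(-)$ and $(-)^{\perp_1}$ via $\EE(X_M,Y)\simeq D\Hom_\Lambda(H^0(Y),\tau M)$. You correctly isolate this as the statement that needs proving, and the rest of that chain is indeed routine given \cref{demonet_finite}.

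The gap is in $(1)\Rightarrow(4)$. Your correspondence (iii), a bijection between all thick subcategories of $\Kproj$ and all wide subcategories of $\Mod\Lambda$, is not available and is not a technical refinement one can defer: the paper only has a well-defined map $\Ww:\thi\Lambda\to\wide\Lambda$ restricting to a bijection between thick subcategories \emph{with enough injectives} and \emph{left finite} wide subcategories (\cref{theo-thick-wide}). Injectivity of $\Ww$ on arbitrary thick subcategories is precisely what is at stake; a priori many thick subcategories could share the same image, and for the Kronecker algebra the tubes already give thick subcategories containing no nonzero presilting object, so the general behaviour of $\thi\Lambda$ is genuinely wild outside the $g$-finite case. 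The paper's route is different: it proves (\cref{gfinite-thick}) that when $\Lambda$ is $g$-finite every thick subcategory equals $\thi_{[-1,0]}(U)$ for some $2$-term presilting object $U$, so finiteness of $\thi\Lambda$ follows directly from finiteness of presilting objects. That theorem rests on \cref{presilt_in_thick} --- every nonzero extension-closed subcategory contains a nonzero presilting object or some $P\oplus P[1]$ --- whose proof uses the completeness of the $g$-vector fan for $g$-finite algebras together with the degeneration result \cref{degeneration}, followed by an inductive silting-reduction argument to saturate the presilting object into a generator. This is the core of the implication and cannot be absorbed into "bookkeeping". Your converse $(4)\Rightarrow(1)$ is fine, since it only needs the restricted bijection of \cref{theo-thick-wide} (or, more directly, \cref{main_coro}).
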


\begin{theorem*}[\textbf{\ref{allthick}}]
	Suppose $\Lambda$ is $g$-finite. Then all thick subcategories of $\Kproj$ have enough injectives. 
\end{theorem*}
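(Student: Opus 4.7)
The plan is to derive \cref{allthick} from condition (3) of \cref{monica_finite} by invoking the correspondence between thick subcategories of $\Kproj$ and cotorsion pairs developed in \cite{garcia2023thick}. The key feature of that correspondence is that a thick subcategory $\T$ is associated with a canonical cotorsion pair (obtained by forming the appropriate Ext-perpendiculars of $\T$ inside $\Kproj$), and that the completeness of this cotorsion pair is precisely the assertion that every object of $\T$ admits an inflation into an injective object of $\T$ and a deflation from a projective object of $\T$. In other words, under the bijection between thick subcategories and cotorsion pairs, "enough injectives and projectives in $\T$" translates to "completeness of the associated cotorsion pair".

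Granting this translation, the proof reduces to a one-line application of \cref{monica_finite}. Namely, assuming $\Lambda$ is $g$-finite, part (3) of that theorem guarantees that every cotorsion pair in $\Kproj$ is complete. Applying this to the cotorsion pair attached to an arbitrary thick subcategory $\T$ yields that $\T$ has enough injectives (and, analogously, enough projectives, matching the phrasing in the abstract).

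The hard part will be ensuring that the correspondence from \cite{garcia2023thick} is available in exactly the generality needed: one needs a bijection (or at least an association in both directions) between \emph{all} thick subcategories and \emph{all} cotorsion pairs in $\Kproj$, not only those with the completeness/enough-injectives property. If the bijection in \cite{garcia2023thick} is phrased only under a completeness hypothesis, one must independently verify that any thick subcategory $\T$ gives rise to a cotorsion pair via the Ext-perpendicular construction, and that completeness of this cotorsion pair is truly equivalent to $\T$ having enough injectives. The cleanest way to settle this is to extract injective hulls inside $\T$ directly from the conflations witnessing completeness: given $X \in \T$, the defining conflation of the cotorsion pair gives an inflation $X \rightarrowtail I$ with cokernel in the left half, and one checks that $I$ lies in $\T$ and is injective in $\T$, because $\T$ is closed under extensions and cones.
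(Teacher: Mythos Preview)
Your strategy has a genuine gap at its core. The correspondence in \cite{garcia2023thick} recorded here as \cref{beta} is a map $\beta:\ctor\Lambda\to\thi\Lambda$ going \emph{from} cotorsion pairs \emph{to} thick subcategories, and it is only asserted to be a bijection after restricting the source to complete cotorsion pairs and the target to thick subcategories with enough injectives. There is no general inverse defined on all of $\thi\Lambda$. So even granting that every cotorsion pair is complete (condition~(3) of \cref{monica_finite}), what you obtain is that the image of $\beta$ is precisely $\fthi\Lambda$; you do not obtain that $\beta$ is surjective onto $\thi\Lambda$, which is what you need.

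Your proposed fix in the last paragraph does not work. If you build a cotorsion pair from an arbitrary thick $\T$ by taking Ext-perpendiculars, say $(\X,\Y)=({}^{\perp_1}(\T^{\perp_1}),\T^{\perp_1})$, then completeness gives, for $X\in\T$, a conflation $X\rightarrowtail I\twoheadrightarrow X'$ with $I\in\Y$ and $X'\in\X$. But only one of the three terms is known to lie in $\T$; closure under extensions and cones is useless here, and there is no reason for $I$ to belong to $\T$. What this conflation actually shows is that $\beta(\X,\Y)$ (which does have enough injectives) contains $\T$, but it may be strictly larger. The whole difficulty of the theorem is ruling out that strict containment.

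The paper's proof is completely different and considerably more involved. It first establishes, via silting reduction (\cref{rhothick}) and a degeneration argument (\cref{presilt_in_thick}, \cref{gfinite-thick}), that under $g$-finiteness every thick subcategory is of the form $\thi_{[-1,0]}(U)$ for some $2$-term presilting $U$. It then passes through the bijection $\Ww/\Tt$ with left-finite wide subcategories (\cref{WwTt}) to produce a thick subcategory $\thi_{[-1,0]}(V)$ with enough injectives that contains $\thi_{[-1,0]}(U)$, and finally uses Bongartz completion together with the equality $|U|=|V|$ (both giving the same $\tau$-perpendicular category) and \cref{maximal_silt} to force $\thi_b(U)=\thi_b(V)$, whence $\thi_{[-1,0]}(U)=\thi_{[-1,0]}(V)$ by \cref{thickisthick}.
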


To show condition $\mathit{(3)}$ of \cref{monica_finite} we extend a result of D.~Pauksztello and A.~Zvonareva \cite{pauksztello2023cotorsion}. 

\begin{theorem*}[\textbf{\ref{cotortorbij}}]
	Let $\Lambda$ be a finite-dimensional $\Bbbk$-algebra. Then the functor $H^0 : \Kproj \rightarrow \Mod \Lambda$ induces a bijection 
	\begin{align*}
		H^0  : \ctor \Lambda &\rightarrow \tor \Lambda \\
		(\X, \Y) &\mapsto H^0(\Y) . 
	\end{align*}
\end{theorem*}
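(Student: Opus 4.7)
The plan is to prove the bijection in three moves: verify $H^0(\Y)$ is a torsion class, show the map is injective, and exhibit a preimage for every torsion class. The inverse map will turn out to be $\T \mapsto \Y_{\T}$, where $\Y_{\T} := \{Y \in \Kproj : H^0(Y) \in \T\}$.

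For \textbf{well-definedness}, I would show $\T := H^0(\Y)$ is closed under extensions and quotients. Closure under extensions uses a standard horseshoe construction: given $0 \to M_1 \to M \to M_2 \to 0$ with $M_i = H^0(Y_i)$ and $Y_i \in \Y$, one builds a termwise-split conflation $Y_1 \to Y \to Y_2$ in $\Kproj$ with $H^0(Y) = M$, and extension-closedness of $\Y$ forces $Y \in \Y$ and $M \in \T$. Closure under quotients is subtler and exploits that $\Kproj$ sits in two consecutive degrees of $\Kb$, so $\Ext^2_{\Kproj} = 0$. Given $Y \in \Y$ with $H^0(Y) = M$ and a surjection $M \twoheadrightarrow N$ with kernel $K$, pick any projective $P_K \twoheadrightarrow K$ and lift to $P_K \to Y^0$, then set $Y_N := (Y^{-1} \oplus P_K \to Y^0)$, so $H^0(Y_N) = N$. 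A mapping-cone computation shows the natural inclusion $Y \to Y_N$ has cone with cohomology $P_K$ in degree $-1$; since bounded complexes of projectives with the same cohomology are isomorphic in $\Kb$, the cone is isomorphic to $P_K[1]$, and rotating the resulting triangle gives a conflation $P_K \to Y \to Y_N$ in $\Kproj$. The term $P_K$ is a stalk in degree $0$, hence projective in $\Kproj$ and in $\X$. Applying $\Hom(X', -)$ for $X' \in \X$ and using $\Ext^1(X', Y) = 0$ together with $\Ext^2_{\Kproj}(X', P_K) = 0$ yields $\Ext^1(X', Y_N) = 0$, so $Y_N \in \X^{\perp_1} = \Y$ and $N \in \T$.

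For \textbf{injectivity}, I would use that any 2-term projective presentation of a module $M$ is isomorphic in $\Kproj$ to $Y_{\min} \oplus P[1]$ for the minimal presentation $Y_{\min}$ of $M$ and some projective $P$. The shifted projectives $P[1]$ are precisely the injective objects of $\Kproj$ and thus lie in $\Y$; combined with closure of $\Y$ under direct summands, this shows $\Y = \Y_{\T}$ for $\T = H^0(\Y)$, so $(\X, \Y)$ is uniquely determined by $\T$. For \textbf{surjectivity}, the result of Pauksztello and Zvonareva associates to every torsion class $\T$ a cotorsion pair in $\Kproj$ whose right half is $\Y_{\T}$, exhibiting $\T$ in the image.

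The main obstacle is the closure-under-quotients step: the mapping-cone computation temporarily leaves $\Kproj$ (producing a three-term complex), and one must invoke that quasi-isomorphism equals isomorphism in $\Kb$ to re-identify the cone as an object of $\Kproj$. The extriangulated bookkeeping needed to then transport membership of $\Y$ across the conflation via $\Ext^2$-vanishing is where the extension beyond the complete-cotorsion-pair case actually happens.
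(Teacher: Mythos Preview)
Your well-definedness and injectivity arguments are essentially sound, but there is a genuine gap at the surjectivity step. You write that ``the result of Pauksztello and Zvonareva associates to every torsion class $\T$ a cotorsion pair in $\Kproj$ whose right half is $\Y_\T$''. This is precisely what Pauksztello--Zvonareva do \emph{not} establish in general: as recorded in \cref{PZoriginal}, their result says that $\psi$ is well-defined on all of $\ctor\Lambda$ and that it restricts to a bijection between \emph{complete} cotorsion pairs and \emph{functorially finite} torsion classes. Producing a cotorsion pair from an arbitrary torsion class is exactly the new content of the theorem, and it is where all the work in the paper's proof goes. The paper shows, via explicit computations with the Auslander--Reiten translate and the identity $\EE(X_M,Y)\simeq D\Hom_\Lambda(H^0(Y),\tau M)$, that $({}^{\perp_1}\Y_\T)^{\perp_1}=\Y_\T$ for every torsion class $\T$; without this (or an equivalent argument) your surjectivity claim is unsupported, and the proof is incomplete.

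Two smaller remarks. First, your direct proof of well-definedness re-proves what the paper simply cites from \cite{pauksztello2023cotorsion}; that is fine but it is not where the difficulty lies. Second, in your quotient-closure step, the assertion that ``bounded complexes of projectives with the same cohomology are isomorphic in $\Kb$'' is false as a general principle. What \emph{is} true, and suffices here, is that a bounded complex of projectives whose only nonzero cohomology is a projective module $P$ concentrated in a single degree is homotopy equivalent to the corresponding stalk complex (split off contractible summands degree by degree, using that surjections onto projectives and injections with projective cokernel split). With that correction your conflation $P_K\rightarrowtail Y\twoheadrightarrow Y_N$ and the $\EE^2$-vanishing argument do go through.
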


Recall that the set of torsion classes of $\Mod \Lambda$ equipped with the inclusion forms a lattice, that is, a poset in which any two elements have an unique least upper bound (or \textit{join}), and an unique greatest lower bound (or \textit{meet}) (see for instance \cite{thomas2021intro}). In a similar way, we can equip the set of cotorsion pairs in $\Kproj$ with a poset structure: for any $(\X, \Y)$ and $(\X', \Y') \in \ctor \Lambda$ we say that $(\X, \Y) \leq (\X', \Y')$ if $\Y \subset \Y'$. Since $H^0$ preserves inclusions, \cref{cotortorbij} gives the following corollary. 
\begin{corollary} Let $\Lambda$ be a finite-dimensional $\Bbbk$-algebra. Then the set $\ctor \Lambda$ of cotorsion pairs in $\Kproj$ forms a lattice which is isomorphic to the lattice $\tor \Lambda$ of torsion classes in $\Mod \Lambda$. 
\end{corollary}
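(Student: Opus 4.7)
The plan is to derive the corollary from \cref{cotortorbij} by upgrading the bijection $H^0 : \ctor \Lambda \to \tor \Lambda$ to a poset isomorphism with respect to the orders specified in the paragraph preceding the statement. Since $\tor \Lambda$ is already known to be a lattice under inclusion, this formally equips $\ctor \Lambda$ with a lattice structure and identifies it with $\tor \Lambda$.

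First, I would record that $H^0$ is order-preserving: by definition, $(\X,\Y) \leq (\X',\Y')$ means $\Y \subset \Y'$, and applying the functor $H^0$ immediately yields $H^0(\Y) \subset H^0(\Y')$. This is the observation already made just before the corollary. Second, I would check that the inverse of $H^0$ is also order-preserving, i.e.\ that $H^0(\Y) \subset H^0(\Y')$ forces $\Y \subset \Y'$. This requires unpacking the construction of the inverse map used to prove \cref{cotortorbij}: the right-hand side $\Y_\T$ of the cotorsion pair associated to a torsion class $\T$ should be recovered from $\T$ by a procedure that is manifestly monotone in $\T$ (typically as the preimage of $\T$ under $H^0$ intersected with an $\Ext$-vanishing condition inherited from the order on $\tor \Lambda$). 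Monotonicity then gives $\Y_{\T} \subset \Y_{\T'}$ whenever $\T \subset \T'$.

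Once both $H^0$ and its inverse are shown to be order-preserving, the rest is formal: for $(\X,\Y),(\X',\Y') \in \ctor \Lambda$, their meet in $\ctor \Lambda$ is the unique cotorsion pair whose image under $H^0$ is $H^0(\Y) \cap H^0(\Y')$, and the join is the preimage of the torsion class generated by $H^0(\Y) \cup H^0(\Y')$. The main obstacle is the second step, the monotonicity of the inverse, which depends on having an explicit enough description of the cotorsion pair attached to a given torsion class in the proof of \cref{cotortorbij}; everything else is a purely order-theoretic transfer.
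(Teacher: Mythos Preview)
Your proposal is correct and matches the paper's approach: the paper treats the corollary as an immediate consequence of \cref{cotortorbij} together with the remark that $H^0$ preserves inclusions, and your write-up simply fills in the details (in particular, the monotonicity of the inverse). Note that in the proof of \cref{cotortorbij} the inverse is given by $\T \mapsto (\prescript{\perp_1}{}{\Y_\T},\Y_\T)$ with $\Y_\T = \{X \in \Kproj \mid H^0(X) \in \T\}$, i.e.\ just the preimage of $\T$ under $H^0$, so the monotonicity you flag as the ``main obstacle'' is in fact immediate---no additional $\Ext$-vanishing condition is needed.
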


To establish the equivalence between being $g$-finite and condition $\mathit{(4)}$, we show that when $\Lambda$ is $g$-finite, then all thick subcategories in $\Kproj$ contain a presilting object. The proof of this fact relies on geometric results on the degeneration of objects of the triangulated category $\K^b(\proj \Lambda)$ \cite{bernt2005degenerations}. We also develop a reduction argument by explicitly computing the silting reduction of the extriangulated category $\Kproj$ by a presilting object in the manner of that developed by O.~Iyama and D.~Yang for triangulated categories in \cite{iyama2018silting}. Moreover, we show that the categories obtained after reduction are equivalent to the extriagnulated category $\per^{[-1, 0]}(\Gamma)$ of $2$-term complexes over a certain non positive dg-algebra $\Gamma$. 

\section*{Acknowledgments}
I am grateful to P.-G. Plamondon for his insights and support throughout the development of my Ph.D. thesis, from which this article originates. I would also like to thank E. D. B{\o}rve for the many interesting discussions and for sharing with me a preliminary version of \cite{borve2024silting}. I also thank A. Shah for his comments, which led to the addition of \cref{thekey1}. Finally, I thank the organizers of the Oberwolfach Workshop ``Cluster Algebras and Its Applications", where this work was first presented. 

\section{Preliminaries}
The goal of this section is to introduce the necessary preliminaries for Sections \ref{sec_reduction} and \ref{sec_thickg}. We recall the definition of the extriangulated category of $2$-term perfect complexes $\per^{[-1,0]}(\Gamma)$ over a dg algebra $\Gamma$, of which $\Kproj$ is a special case. For a broader introduction to dg categories see \cite{keller1994DG, keller2006dg}. Most of the results in this section are taken from \cite{brustle2013ordered, koening2014silting}. Throughout this paper, we fix a field $\Bbbk$. 
\subsection{dg algebras and dg categories}
Recall that a \textit{differential graded algebra}, or dg algebra for short, is a graded $\Bbbk$-algebra $\Gamma = \bigoplus_{i \in \mathbb{Z}} \Gamma^i$ equipped with a homogeneous $\Bbbk$-linear map of degree one $d_\Gamma : \Gamma \rightarrow \Gamma$ such that 
	\begin{enumerate}[i)]
		\item $d_\Gamma(ab) = d_\Gamma(a)b + (-1)^i a d_\Gamma(b) \ $  $\forall a \in \Gamma ^i\ \text{and } b \in \Gamma$;
		\item $d_\Gamma \circ d_\Gamma = 0$.
	\end{enumerate}
	We call $d_\Gamma$ the \textit{differential} of $\Gamma$. We say a a dg algebra $\Gamma$ is \textit{non-positive} if $\Gamma^i = 0$ for all $i \geq 1$. We say that $\Gamma$ it is \textit{finite-dimensional} if it is finite-dimensional as a $\Bbbk$-vector space. 
A dg $\Gamma$-module is a graded right $\Gamma$-module $X = \bigoplus_{i \in \mathbb{Z}} X^i$ equipped with a homogeneous $\Bbbk$-linear map of degree one $d_X: X \rightarrow X$ such that 
\[d_X(xa) = d_X(x)a + (-1)^i x \,d_\Gamma(a) \ \forall a \in \Gamma \ \text{and } x \in X^i.\]
For a given dg $\Gamma$-module $X$, we denote by $H^i(X) = \Ker d^i_X/\Imm d^{i-1}_X$. Recall that for any $X$ and $Y$ dg $\Gamma$-modules we have the dg $\Bbbk$-module
	\[ \cHom_\Gamma(X, Y) = \bigoplus_{i \in \mathbb{Z}} \cHom^i_\Gamma(X, Y)\]
	where $\cHom^i(X, Y)$ is the subset of $\displaystyle\prod_{j \in \mathbb{Z}} \Hom_\Gamma(X^j, Y^{j+i})$ of elements $(f_j)_{j \in \mathbb{Z}}$ such that 
	\[f_j(x)a = f_{j+n}(xa) \text{ for all } x \in X^j \text{ and } a \in \Gamma ^n;\]
	whose differential is given by the map 
	\[ f \in \cHom^i_\Gamma(X, Y) \mapsto f \circ d_X - (-1)^id_Y \circ f.\]
When $X = Y$, we will write $\cHom_\Gamma(X, Y) = \EEn_\Gamma(X)$. Note that given two dg $\Gamma$-modules $X$ and $Y$, the kernel of $d^0$, which we denote by $Z^0(\cHom_\Gamma(X,Y))$, is nothing but the set of $\Gamma$-linear maps that commute with the differentials of $X$ and $Y$. For any dg $\Gamma$-module $X$ with differential $d_X$ and $i \in \mathbb{Z}$, let $X[i]$ be the dg module whose underlying graded $\Gamma$-module is $\bigoplus_{j \in \mathbb{Z}} X^{i+j}$ equipped with the differential $(-1)^id_X$, then the sets $H^i(\cHom_\Gamma(X,Y))$ correspond to the sets $Z^0(\cHom_\Gamma(X,Y[i]))$ modulo the homotopy relation. 

\begin{definition} Let $\Gamma$ be a dg algebra. We denote by $\C(\Gamma)$ the category whose objects are dg $\Gamma$-modules and whose morphism spaces are given by
	\[\Hom_{\C(\Gamma)}(X, Y) = Z^0(\cHom_\Gamma(X, Y)).\]
We will denote by $\K(\Gamma)$ the category whose objects are the same as those of $\C(\Gamma)$ but whose morphism spaces are given by
\[\Hom_{\K(\Gamma)}(X, Y) = H^0(\cHom_\Gamma(X, Y)).\]
The \textit{derived category} associated to $\Gamma$ is the triangulated quotient
\[\D(\Gamma) = \K(\Gamma) / \mathcal{N},\]
where $\mathcal{N}$ is the thick subcategory of objects whose cohomologies are all $0$. In other words, $\D(\Gamma)$ is the Verdier localisation of $\K(\Gamma)$ by quasi-isomorphisms.
\end{definition}

We will denote by $\D_{fd}(\Gamma)$ the full subcategory of $\D(\Gamma)$ of dg $\Gamma$-modules whose total cohomology is finite-dimensional. We will consider as well the category $\per(\Gamma) = \thi_{\D(\Gamma)}(\Gamma)$ the smallest triangulated full subcategory of $\D(\Gamma)$ containing $\Gamma$ and closed under direct summands. The subcategory  $\per(\Gamma)$ is known as the \textit{category of perfect complexes} over $\Gamma$. 

\begin{example}
	Let $\Lambda$ be a finite-dimensional $\Bbbk$-algebra. When $\Gamma$ is seen as a dg algebra concentrated in degree $0$, $\C(\Lambda)$ is precisely the category $\C(\text{Mod } \Lambda)$ of complexes of (not necessarily finite-dimensional) $\Lambda$-modules, $\D(\Lambda)$ is $\D(\text{Mod } \Lambda)$, $\D_{fd}(\Lambda)$ corresponds to $\D^b(\Mod \Lambda)$ and $\per(\Lambda)$ is equivalent to $\Kb$. 
\end{example}
	
\begin{theorem}\cite[Lemma 4.1]{koening2014silting} \cite[Theorem 3.8 b)]{keller2006dg}\label{kellermorita} Let $\C$ be an algebraic triangulated category, that is, equivalent to the stable category of a Frobenius category. Suppose that $\C$ has split idempotents and a silting object $X \in \C$. Then there exists a non-positive dg algebra $\Gamma$ and a triangle equivalence 
	\[\per(\Gamma) \xlongrightarrow{\simeq} \C\]
which takes $\Gamma$ to $X$. In particular $H^0(\Gamma) \simeq \Hom_{\per(\Gamma)}(\Gamma, \Gamma) \simeq \Hom_{\C}(X,X)$. 
\end{theorem}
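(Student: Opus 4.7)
The plan is to realise $\C$ as $\per(\Gamma)$ by constructing $\Gamma$ as the dg endomorphism algebra of $X$ inside a dg enhancement of $\C$. Since $\C$ is algebraic, it is equivalent as a triangulated category to $H^0(\mathcal{A})$ for some pretriangulated dg category $\mathcal{A}$, which can be built explicitly from a Frobenius model whose stable category is $\C$. Having such an enhancement is essential because higher homotopies are needed in order to define a coherent dg algebra of endomorphisms at $X$, rather than only the graded $\Hom$-algebra.

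First I would set $\Gamma = \EEn_{\mathcal{A}}(X)$, the dg endomorphism algebra of a chosen lift of $X$ to $\mathcal{A}$. By construction $H^i(\Gamma) \simeq \Hom_\C(X, X[i])$ for every $i \in \mathbb{Z}$. The silting hypothesis $\Hom_\C(X, X[i]) = 0$ for $i > 0$ implies that the cohomology of $\Gamma$ is concentrated in non-positive degrees, so the good truncation $\tau_{\leq 0}\Gamma$ is quasi-isomorphic to $\Gamma$ and is a non-positive dg algebra; I would replace $\Gamma$ by this truncation.

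Next I would produce a triangle functor $F : \per(\Gamma) \to \C$. Using the enhancement, the derived tensor product $? \otimes_\Gamma^{\mathbb{L}} X : \D(\Gamma) \to H^0(\mathcal{A}) \simeq \C$ sends $\Gamma$ to $X$, and restricting to $\per(\Gamma) = \thi_{\D(\Gamma)}(\Gamma)$ yields the desired $F$ with $F(\Gamma) = X$. On $\Gamma$ itself, the induced map $\Hom_{\per(\Gamma)}(\Gamma, \Gamma[i]) \to \Hom_\C(X, X[i])$ is an isomorphism for every $i$ by construction. Full faithfulness of $F$ on all of $\per(\Gamma)$ then follows by a standard dévissage: the subcategory of those $M \in \per(\Gamma)$ for which $\Hom_{\per(\Gamma)}(\Gamma, M[i]) \to \Hom_\C(X, F(M)[i])$ is bijective for all $i$ is triangulated, closed under summands, and contains $\Gamma$, hence equals $\per(\Gamma)$; a symmetric argument extends the isomorphism to both variables.

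Finally, the essential image of $F$ is a thick subcategory of $\C$ containing $X$; because $X$ is silting in a category with split idempotents, it generates $\C$ as a thick subcategory, so $F$ is essentially surjective, giving the desired triangle equivalence. The identification $H^0(\Gamma) \simeq \Hom_\C(X, X)$ is the $i = 0$ case of the cohomological isomorphism above. The main obstacle I expect is setting up the dg enhancement and the derived tensor functor $? \otimes_\Gamma^{\mathbb{L}} X$ carefully enough to guarantee $F(\Gamma) = X$ on the nose and to turn the graded $\Hom$-comparison into a triangle equivalence; this relies on the machinery of cofibrant replacements for dg modules and is the technical core of the proof. Once that scaffolding is in place, the cohomological comparison and the dévissage step are entirely routine.
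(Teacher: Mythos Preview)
The paper does not give its own proof of this theorem: it is stated in the preliminaries with citations to \cite{koening2014silting} and \cite{keller2006dg} and then used as a black box. There is therefore no ``paper's proof'' to compare your proposal against.

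That said, your sketch is essentially the standard argument behind the cited results. You correctly identify the key ingredients: (i) the algebraic hypothesis provides a dg enhancement $\mathcal{A}$ so that one can form the dg endomorphism algebra $\Gamma = \EEn_{\mathcal{A}}(X)$; (ii) the silting condition forces $H^i(\Gamma) = 0$ for $i>0$, so after truncation $\Gamma$ may be taken non-positive; (iii) the derived tensor functor $?\otimes_\Gamma^{\mathbb{L}} X$ gives a triangle functor $\per(\Gamma)\to\C$ sending $\Gamma$ to $X$; (iv) full faithfulness follows by d\'evissage from the isomorphism on $\Hom(\Gamma,\Gamma[i])$; and (v) essential surjectivity follows from $\thi_\C(X)=\C$ together with split idempotents. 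This is precisely the line of argument in Keller's work, and Koenig--Yang's Lemma~4.1 packages the non-positivity observation. Your caveat about the technical care needed in setting up the enhancement and the derived tensor functor is well placed: that is indeed where the real work lies, and it is the reason the result is quoted rather than reproved here.
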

\subsection{$2$-term silting objects and $2$-term simple-minded collections}

Let $\D$ be a triangulated category which is essentially
small, Krull–Schmidt, $\Bbbk$-linear and Hom-finite with shift functor $\Sigma$. Recall that a silting object $V$ of $\D$ is an object that satisfying $\Hom_{\D}(V, \Sigma ^i V) = 0$ for all $i > 0$ and such that $\thi_{\D}(V) = \D$.  Let $V$ be a silting object in $\D$, we say that an object $X \in \D$ is \textit{$2$-term} with respect to $V$ if there exist $Y, Y' \in \add(V)$ and a triangle 
\[Y' \rightarrow Y \rightarrow X \dashrightarrow Y'[1]. \] 
We will denote by $V * \Sigma V$ the full subcategory of $2$-term objects with respect to $V$. When $\D = \per(\Gamma)$, where $\Gamma$ is a finite-dimensional non-positive dg algebra, then $\Gamma$ is itself a silting object in $\per(\Gamma)$, since $\Hom_{\per(\Gamma)}(\Gamma, \Gamma[i]) \simeq H^i(\Gamma) = 0$ for all $i >0$. We write $\per^{[-1, 0]}(\Gamma) := \Gamma * \Gamma[1]$ for the full subcategory of $2$-term objects of $\per \Gamma$ with respect to $\Gamma$. We will refer to $\per^{[-1, 0]}(\Gamma)$ simply as the category of $2$-term objects of $\Gamma$ and denote by $\tsilt \Gamma$ the set isomorphism classes of basic $2$-term silting objects in $\per(\Gamma)$. 

\begin{lemma}[Bongartz Completion]\cite[Lemma 4.2]{iyama2014twoterm}\label{Bconp_silting}
	Let $\D$ be a triangulated category which is essentially
	small, Krull–Schmidt, $\Bbbk$-linear and Hom-finite with shift functor $\Sigma$. Suppose that $\D$ possesses a silting object $V \in \D$. Let $U \in \D$ be a presilting object in $V *\Sigma V$, then there exists and object $U' \in V * \Sigma V$ such that $U \oplus U'$ is a silting object in $\D$. 
\end{lemma}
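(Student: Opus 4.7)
The plan is to construct $U'$ as a Bongartz-type complement of $U$, following the standard universal-extension template adapted to the triangulated, silting setting.

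First, since $\D$ is Hom-finite and Krull--Schmidt, $\Hom_\D(U, \Sigma V)$ is a finite-dimensional $\Bbbk$-vector space, so we may pick a right $\add U$-approximation $\alpha : U^m \to \Sigma V$, meaning that $\Hom_\D(U, \alpha)$ is surjective. Completing $\alpha$ to a triangle and rotating yields
\[ V \longrightarrow U' \longrightarrow U^m \xrightarrow{\ \alpha\ } \Sigma V, \]
which defines $U'$.

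Next I would verify that $U \oplus U'$ is silting in $\D$. Thickness is immediate: the defining triangle places $V$ in $\thi_\D(U, U')$, and $\thi_\D(V) = \D$. For the Hom-vanishing $\Hom_\D(U \oplus U', \Sigma^i (U \oplus U')) = 0$ for $i > 0$, one checks the four components separately by applying $\Hom_\D(U, -)$, $\Hom_\D(-, U)$, and $\Hom_\D(-, U')$ to the defining triangle and reading off the long exact sequences. The essential inputs are the two auxiliary vanishings $\Hom_\D(U, \Sigma^i V) = 0$ for $i \geq 2$ and $\Hom_\D(V, \Sigma^i U) = 0$ for $i \geq 1$, both obtained by applying $\Hom_\D(V, -)$ or $\Hom_\D(-, V)$ to the $2$-term presentation of $U$ and using that $V$ is silting. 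The universality of $\alpha$ enters precisely in the case $\Hom_\D(U, \Sigma U') = 0$, where the surjection $\Hom_\D(U, U^m) \twoheadrightarrow \Hom_\D(U, \Sigma V)$ eliminates the remaining term of the long exact sequence.

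Finally, I would verify that $U' \in V * \Sigma V$. The cleanest route is to invoke the standard fact that, for a silting object $V$, the subcategory $V * \Sigma V$ is closed under extensions in $\D$ (ultimately because $\Hom_\D(V, \Sigma^2 V) = 0$). Since both $V$ and $U^m$ lie in $V * \Sigma V$, the defining triangle forces $U'$ to lie there as well. The step I expect to require the most careful bookkeeping is the Hom-vanishing $\Hom_\D(U', \Sigma^i U') = 0$, which chains together the previously established vanishings and demands attention to the orientation of each long exact sequence; the rest is essentially a formal consequence of the defining triangle together with the approximation property of $\alpha$.
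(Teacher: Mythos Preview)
The paper does not give its own proof of this lemma; it is quoted verbatim as \cite[Lemma 4.2]{iyama2014twoterm} and used as a black box. Your sketch reproduces exactly the standard argument from that reference: build $U'$ as the cocone of a right $\add(U)$-approximation $U^m\to\Sigma V$, then run the long exact sequences to verify the silting conditions and use extension-closedness of $V*\Sigma V$ to place $U'$ there. So there is nothing to compare against in this paper, but your outline is correct and matches the original source.
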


The following consequence of \cref{Bconp_silting} is of particular importance. It allows us to ascertain whether a $2$-term presilting object $U$ in a triangulated category $\D$ is silting by examining its number of non-isomorphic direct summands, which we denote by $|U|$. 

\begin{proposition}\cite[Proposition 4.3]{iyama2014twoterm}\label{maximal_silt} Let $\D$ be as in \cref{Bconp_silting} and let $U$ be a presilting object in $V *\Sigma V$. Then $U$ is silting if and only if $|V| = |U|$. 
\end{proposition}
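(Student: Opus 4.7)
The plan is to combine the Bongartz completion (\cref{Bconp_silting}) with the standard fact that every basic silting object of $\D$ has the same number of non-isomorphic indecomposable summands, equal to $\operatorname{rk} K_0(\D)$; this follows from the fact that the classes of the indecomposable summands of any basic silting object form a $\mathbb{Z}$-basis of $K_0(\D)$. Without loss of generality I may assume $U$ is basic, since passing to a basic representative preserves both $|U|$ and the silting property.

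For the direction ($\Rightarrow$), if $U$ is silting then applying the key $K_0$-invariance fact to both $V$ and $U$ immediately yields $|U| = \operatorname{rk} K_0(\D) = |V|$.

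For the converse ($\Leftarrow$), suppose $U$ is presilting with $|U| = |V|$. By \cref{Bconp_silting} there exists $U' \in V * \Sigma V$ with $U \oplus U'$ silting; let $W$ be a basic representative of $U \oplus U'$. Applying the forward direction to $W$ gives $|W| = |V| = |U|$. Since $U$ is a basic direct summand of $W$ already contributing $|V|$ non-isomorphic summands, no new indecomposable summand can come from $U'$, so $\add(W) = \add(U)$. Consequently $\thi(U) \supseteq \thi(W) = \D$, and combined with the presilting hypothesis this shows that $U$ is silting.

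The main obstacle is verifying the $K_0$-invariance statement. One clean route is via \cref{kellermorita}: there is a triangle equivalence $\per(\Gamma) \simeq \D$ sending $\Gamma \mapsto V$, which identifies $K_0(\D)$ with $K_0(\proj H^0\Gamma)$, a free abelian group of rank $|V|$. The corresponding statement for any other basic silting object then follows because the index map (the g-vector) sends the split Grothendieck group of the additive closure of any silting object isomorphically onto $K_0(\D)$, so the cardinality of a basic set of indecomposable summands is a categorical invariant equal to $|V|$.
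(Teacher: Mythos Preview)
The paper does not give its own proof of this proposition; it is quoted verbatim from \cite[Proposition 4.3]{iyama2014twoterm} as a preliminary result. Your argument is essentially the standard one found there and in Aihara--Iyama: Bongartz-complete, then use that any two basic silting objects have the same number of indecomposable summands to force $\add(W)=\add(U)$. The logic is correct.

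One small caution about your last paragraph: you justify the invariance of $|T|$ for basic silting $T$ by invoking \cref{kellermorita}, but that theorem requires $\D$ to be \emph{algebraic}, whereas \cref{Bconp_silting} (and hence \cref{maximal_silt}) only assumes $\D$ is essentially small, Krull--Schmidt, $\Bbbk$-linear and Hom-finite. In the paper's applications $\D$ is always $\per(\Gamma)$ or $\Kb$, so algebraicity holds and your route goes through; but for the statement as written you should instead appeal directly to the fact (e.g.\ \cite[Theorem 2.27]{aihara2012silting}) that the classes of the indecomposable summands of any basic silting object form a $\mathbb{Z}$-basis of $K_0(\D)$, which is proved from the description $\thi(T)=\bigcup_n T[-n]*\cdots*T[n]$ together with the presilting vanishing, and does not need algebraicity. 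With that adjustment your proof is complete.
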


\begin{definition}\cite{rickard2002equivalences}
	Let $\C$ be a triangulated category with shift functor $\Sigma$. A collection of objects $X_1, X_2, \cdots X_r$ is said to be \textit{simple-minded} if the following conditions hold for any $1 \leq i, j \leq n$:
	\begin{enumerate}[i)]
		\item $\Hom_\C(X_i, \Sigma^m X_j) = 0$ for all $m < 0$, 
		\item $\En_\C(X_i)$ is a division algebra and $\Hom_\C(X_i, X_j) = 0$ when $i \neq j$. 
		\item $\thi_\C(\{X_1, \cdots X_r\}) = \C$.
	\end{enumerate}
We denote by $\smc \Gamma$ the set of isomorphism classes of simple-minded collections of $\D_{fd}(\Gamma)$. 
\end{definition}
If $\Gamma$ is a finite-dimensional non-positive dg algebra, then the set $\{S_1, \cdots S_n\}$ of pairwise non-isomorphic simple $H^0(\Gamma)$-modules is a simple-minded collection in $\D_{fd}(\Gamma)$ \cite[Appendix A.1]{brustle2013ordered}. We say that a simple-minded collection of $\D_{fd}(\Gamma)$ is \textit{$2$-term} if $H^j(X_i) = 0$ for all $j \neq 0, -1$ and all $1\leq i \leq n$. We denote by $\tsmc \Gamma$ the set of isomorphism classes of $2$-term simple-minded collections. 

\begin{remark}\cite[Remark 4.11]{brustle2013ordered}
	Let $\Lambda$ be a finite-dimensional $\Bbbk$-algebra. Suppose that $\{X_1, \cdots X_n\}$ is a $2$-term simple-minded collection in $\D^b(\Mod \Lambda)$, then for any $1 \leq i \leq n$ the object $X_i$ belongs to either $\Mod \Lambda$ or $(\Mod \Lambda)[1]$. 
\end{remark}

\begin{theorem}\cite{keller2013weight, koening2014silting} \cite[Corollary 4.1]{brustle2013ordered}\label{silt_smc}
	Let $\Gamma$ be a homologically smooth non-positive dg algebra or a finite-dimensional $\Bbbk$-algebra. Then there exists a bijection 
	\[ \Omega : \silt \Gamma \rightarrow \smc \Gamma\]
	that restricts to a bijection $\Omega : \tsilt \Gamma \rightarrow \tsmc \Gamma$ between the set of $2$-term silting objects and $2$-term simple-minded collections. 
\end{theorem}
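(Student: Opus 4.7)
The plan is to realize $\Omega$ via the intermediate step of bounded t-structures on $\D_{fd}(\Gamma)$. To a silting object $V \in \per(\Gamma)$ I would associate the t-structure with aisle
\[
\D_V^{\leq 0} = \{X \in \D_{fd}(\Gamma) \,:\, \Hom_{\D(\Gamma)}(V, X[i]) = 0 \text{ for all } i > 0\},
\]
and co-aisle defined dually via vanishing of $\Hom(V, X[i])$ for $i < 0$. Under the hypothesis that $\Gamma$ is homologically smooth non-positive (or $\Lambda$ is finite-dimensional), this is a bounded t-structure whose heart $\HH_V$ is a finite-length abelian category; more precisely, $V$ becomes a projective generator and $\HH_V \simeq \Mod \En_{\HH_V}(V)$ has only finitely many isomorphism classes of simple objects. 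I would then define $\Omega(V)$ to be this finite set of simples, viewed as objects of $\D_{fd}(\Gamma)$. Conditions $(i)$ and $(ii)$ of a simple-minded collection are immediate from the axioms for a heart together with Schur's lemma, and condition $(iii)$ follows from boundedness of the t-structure together with the finite length of $\HH_V$.

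For the inverse, given a simple-minded collection $\{X_1, \dots, X_n\} \subset \D_{fd}(\Gamma)$, I would invoke the theorem of Al-Nofayee (generalising Rickard) that the extension closure $\Filt(X_1, \dots, X_n)$ is the heart of a bounded t-structure on $\D_{fd}(\Gamma)$. From this t-structure one produces a silting object using Koszul-type duality: under the smoothness/finiteness hypothesis, the Nakayama functor $- \otimes^{\mathbb{L}}_\Gamma D\Gamma$ yields a perfect pairing between $\per(\Gamma)$ and $\D_{fd}(\Gamma)$ that sends bounded co-t-structures with presilting co-hearts to bounded t-structures with length hearts. I would then verify that the two assignments are mutually inverse, essentially because the simples of $\HH_V$ form the ``dual basis'' to the indecomposable summands of $V$ under the Nakayama pairing.

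For the restriction to 2-term objects, the key observation is that $V$ is 2-term with respect to $\Gamma$ precisely when $\Gamma \in V \ast V[1]$, which in t-structure language means $\D_\Gamma^{\leq -1} \subseteq \D_V^{\leq 0} \subseteq \D_\Gamma^{\leq 0}$, where $\D_\Gamma^{\leq 0}$ denotes the standard aisle on $\D_{fd}(\Gamma)$. This sandwich condition is equivalent to every simple object of $\HH_V$ having nonzero cohomology only in degrees $-1$ and $0$, which is exactly the 2-term condition on the simple-minded collection. Since the construction of $\Omega$ is natural, this yields the claimed restricted bijection $\tsilt \Gamma \to \tsmc \Gamma$.

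The hardest part will be establishing that $\HH_V$ is really a finite-length abelian category equivalent to a module category: without homological smoothness (respectively finite-dimensionality) the heart can have infinitely many simples or none at all, and $\Omega(V)$ need not be a simple-minded collection. The construction of the inverse via Nakayama/Koszul duality is the other delicate, non-formal step, as it relies on the interplay between compact and cohomologically finite objects of $\D(\Gamma)$; once these two ingredients are in place, the 2-term restriction is essentially automatic.
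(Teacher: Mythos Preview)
The paper does not actually prove this theorem: it is quoted from \cite{keller2013weight, koening2014silting, brustle2013ordered} as a preliminary result. The only thing the paper adds is a one-paragraph description of the forward map $\Omega$ in the case $\Gamma = \Lambda$ a finite-dimensional algebra: given a silting object $U$, one applies \cref{kellermorita} to obtain a non-positive dg algebra $B$ and a triangle equivalence $\D(B) \to \D(\Lambda)$ sending $B$ to $U$, and $\Omega(U)$ is defined as the image of the simple $H^0(B)$-modules.

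Your outline is correct and is essentially the K\"onig--Yang approach from \cite{koening2014silting}: the bounded t-structure you attach to $V$ is exactly the one whose heart is $\Mod H^0(B)$ under the equivalence above, so your $\Omega(V)$ agrees with the paper's description. The paper's phrasing via \cref{kellermorita} is slightly more economical for the forward direction (it bypasses verifying that $(\D_V^{\leq 0}, \D_V^{\geq 0})$ is a t-structure with length heart, since this is automatic for the standard t-structure on $\D(B)$), whereas your route through t-structures makes the inverse and the 2-term restriction more transparent. Both are standard and lead to the same bijection; there is nothing to correct.
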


When $\Gamma = \Lambda$ is a finite-dimensional $\Bbbk$-algebra, the bijection in \cref{silt_smc} is given in the following way. Let $U$ be a silting object in $\per \Lambda$. By \cref{kellermorita}, there exists a non-positive dg algebra $B$ together with a triangle equivalence $\D(B) \rightarrow \D(\Lambda)$ that takes $B$ to $U$. The simple-minded collection $\{X_1, \cdots X_n\}$ corresponding to $U$ under the map $\Omega$ is the image under the equivalence $\D(B) \rightarrow \D(\Lambda)$ of a complete collection of non-isomorphic simple $H^0(B)$-modules. In particular, any simple-minded collection has $n = |U| = |\Lambda|$ elements.  

\subsection{0-Asulander extriangulated categories} 
Extriangulated categories were introduced by H.~Nakaoka and Y.~Palu in \cite{nakaoka2019extriangulated} as a mean to generalize both triangulated and exact categories. Examples of extriangulated categories include extension-closed full subcategories of an extriangulated category. In particular, for any given dg algebra $\Gamma$, the category $\per^{[-1,0]}(\Gamma)$ of $2$-term complexes is extriangulated when equipped with the extension functor given by $\EE(X,Y) = \Hom_{\per(\Lambda)}(X, Y[1])$ for any $X, Y \in \per^{[-1,0]}(\Gamma)$. 

We say that a sequence $\scon{X}{f}{Y}{g}{Z}$ in $\per^{[-1,0]}(\Gamma)$ is a \textit{conflation} if there exists $\delta \in \Hom_{\per(\Lambda)}(Z, X[1])$ such that $(f,g,\delta)$ is a triangle in $\per(\Gamma)$. In such cases, we say that $f$ is an \textit{inflation}, which we denote by $\infl{X}{f}{Y}$, and we write that $Z = \Cone(f)$. Similarly, we say that $g$ is a \textit{deflation}, which we denote by $\defl{Y}{g}{Z}$, and we write $X = \CCone(g)$. The following definition is due to M.~Gorsky, H.~Nakaoka and Y.~Palu.

\begin{definition}\cite[Definition 3.7]{gorsky2023hereditary} An extriangulated category is a \textit{0-Auslander} if it has enough projectives, enough injectives, global dimension at most one, dominant dimension at least one, and codominant dimension at least one.
\end{definition}

When $\Gamma$ is a non-positive dg algebra, $\per^{[-1, 0]}(\Gamma)$ is an example of a 0-Auslander extriangulated category. Indeed, by definition for any $X \in \per^{[-1, 0]}(\Gamma)$ there exists a conflation $\scon{U'}{}{U}{}{X}$ where $U, U' \in \add(\Gamma)$ are projective objects in $\per^{[-1, 0]}$. Similarly, there is a conflation $\scon{X}{}{V}{}{V'}$, where $V, V' \in \add(\Gamma[1])$ are injective objects. To see that $\per^{[-1,0]}(\proj \Lambda)$ has dominant and codominant dimension at least one (\cite[Definition 3.5]{gorsky2023hereditary}), remark that all projective objects in $\per^{[-1, 0]}(\Gamma)$ lie in $\add(\Gamma)$, all injective objects lie in $\add(\Gamma[1])$ and for all $U \in \add(\Gamma)$ we have a conflation $\scon{U}{}{0}{}{U[1]}$. 

\begin{proposition}\cite[Proposition 3.2, Corollary 3.3]{fang2023extriangulated} \label{idealquotient} Let $\C$ be a 0-Auslander extriangulated category and let $J$ be an ideal generated by morphisms with injective domain and projective codomain. Then the ideal quotient $\C/J$ is a $0$-Auslander extriangulated category. 
\end{proposition}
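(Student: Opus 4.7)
The plan is to verify the five defining conditions of a 0-Auslander extriangulated category one by one for $\C/J$, exploiting the following crucial observation: for any generator $\iota : I \to P$ of $J$, we have $\EE_\C(\iota,-) = 0$ and $\EE_\C(-,\iota) = 0$, because $\EE_\C(P,-) = 0$ and $\EE_\C(-,I) = 0$ by the very definition of projective and injective objects. In other words, the extension bifunctor $\EE_\C$ is entirely blind to the ideal $J$.

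First, I would endow $\C/J$ with an extriangulated structure by setting $\EE_{\C/J}(Z,X) := \EE_\C(Z,X)$; this descends to a well-defined bifunctor on $\C/J$ precisely because of the observation above. The realization in $\C/J$ of an element $\delta \in \EE_{\C/J}(Z,X)$ is the image, under the quotient functor $\pi : \C \to \C/J$, of a realization of $\delta$ in $\C$. Axioms (ET1) and (ET2) for $\C/J$ are immediate. For (ET3), (ET4), and their duals, one lifts a given diagram in $\C/J$ to a diagram in $\C$ by picking representatives, applies the corresponding axiom in $\C$, and projects the conclusion back down to $\C/J$; the fact that the representative morphisms are only defined modulo $J$ is harmless because $\EE_\C$ annihilates $J$.

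Next, I would check the structural conditions. If $P$ is projective in $\C$, then $\EE_{\C/J}(P,-) = \EE_\C(P,-) = 0$, so $P$ is projective in $\C/J$, and symmetrically for injectives; in particular, projective-injective objects in $\C$ remain projective-injective in $\C/J$. The defining conflations $\scon{P_1}{}{P_0}{}{X}$ and $\scon{X}{}{I_0}{}{I_1}$ in $\C$ descend via $\pi$ to conflations in $\C/J$, providing enough projectives and enough injectives. Likewise, the conflations $\scon{P}{}{Q}{}{R}$ with $Q$ projective-injective, which exist by dominant dimension $\geq 1$ in $\C$, descend to yield dominant dimension $\geq 1$ in $\C/J$; codominant dimension is handled dually. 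Finally, global dimension at most one follows at once, since $\EE^2_{\C/J}$ is a subquotient of $\EE^2_\C = 0$.

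I expect the main obstacle to lie in the first step: rigorously verifying that the candidate data really defines an extriangulated category, in particular checking axioms (ET4) and (ET4op). The subtle point is that a deflation in $\C/J$ is represented by a morphism in $\C$ that is only a deflation up to a correction by an element of $J$, so one must show that the $3 \times 3$-type compatibilities required by the axioms survive this indeterminacy. As before, the essential ingredient is that $\EE_\C$ vanishes on $J$, so any such correction can be reabsorbed without altering the underlying $\EE$-extension, and the axioms descend.
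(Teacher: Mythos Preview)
The paper does not give its own proof of this proposition: it is stated with a citation to \cite[Proposition 3.2, Corollary 3.3]{fang2023extriangulated} and no proof environment follows. So there is no in-paper argument to compare your proposal against.

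That said, your sketch is along the correct lines and matches the standard approach in the cited reference. The key observation---that $\EE_\C$ annihilates every generator of $J$ because $\EE_\C(P,-)=0$ and $\EE_\C(-,I)=0$---is exactly what makes $\EE_\C$ descend unchanged to $\C/J$, and once that is in hand the 0-Auslander conditions transfer for the reasons you give. One small clarification on your last paragraph: with your definition of the realization in $\C/J$ as the image under $\pi$ of a realization in $\C$, a deflation in $\C/J$ is \emph{by construction} the class of an honest deflation in $\C$, not merely ``a deflation up to a correction in $J$''; the genuine subtlety is rather that when verifying (ET3)/(ET4) you must lift arbitrary morphisms of $\C/J$ (not just deflations) back to $\C$, and these lifts are only determined modulo $J$. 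Your resolution---that the ambiguity is invisible to $\EE_\C$---is the right one.
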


We now recall the relation between $2$-term silting objects of a finite-dimensional non-positive dg algebra $\Gamma$ and those of the finite-dimensional algebra $H^0(\Gamma)$. Let $\bar{\Gamma} = H^0(\Gamma)$ and $p : \Gamma \rightarrow \bar{\Gamma}$ the canonical projection. The map $p$ gives rise to the triangulated functor
\begin{align*}
	 p_* : \per(\Gamma) &\longrightarrow \per(\bar{\Gamma})\\
	 X &\longmapsto X \otimes_{\bar{\Gamma}}\bar{\Gamma}, 		
\end{align*}  
which we refer to as the \textit{induction functor}. Since $\Hom_{\per(\Gamma)}(\Gamma, \Gamma) = \bar{\Gamma} = \Hom_{\bar{\Gamma}}(\bar{\Gamma}, \bar{\Gamma})$, then $p_*$ induces an equivalence $\add_{\per(\Gamma)}(\Gamma) \simeq \add_{\per(\bar{\Gamma})}(\bar{\Gamma})$, where $\per(\bar{\Gamma}) \simeq \K^b(\proj \bar{\Gamma})$. 

\begin{proposition}\cite[Proposition A.5]{brustle2013ordered}\label{indc_funct} Let $\mathcal{I}$ be the ideal of $\per^{[-1,0]}(\Gamma)$ consisting of morphisms factoring through morphisms $X[1]\rightarrow Y$ with $X, Y \in \add_{\per(\Gamma)}(\Gamma)$. Then $\mathcal{I}^2 = 0$ and $p_*$ induces an equivalence of $\Bbbk$-linear categories $\per^{[-1,0]}(\Gamma)/\mathcal{I} \rightarrow \K^{[-1,0]}(\proj H^0(\Gamma))$. In particular, $p_*$ is full, detects isomorphisms, preserves indecomposability and induces a bijection between isomorphism classes of objects of $\per^{[-1,0]}(\Gamma)$ and $\K^{[-1,0]}(\proj H^0(\Gamma))$. 
\end{proposition}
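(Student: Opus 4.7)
The plan is to verify four things in order: (a) $\mathcal{I}^2=0$; (b) $p_*$ kills $\mathcal{I}$, so descends to a $\Bbbk$-linear functor $\bar p_*:\per^{[-1,0]}(\Gamma)/\mathcal{I}\to\K^{[-1,0]}(\proj H^0(\Gamma))$; (c) $\bar p_*$ is essentially surjective; (d) $\bar p_*$ is fully faithful. Once $\bar p_*$ is shown to be an equivalence, the ``in particular'' claims follow essentially automatically: because $\mathcal{I}^2=0$, every element of $\mathrm{id}+\mathcal{I}$ is an automorphism, so a morphism $f$ whose image $p_*f$ is an isomorphism lifts an isomorphism of the quotient and is itself an isomorphism (detection of isomorphisms); preservation of indecomposability then holds because endomorphism rings are identified modulo a nilpotent ideal, preserving the property of being local; and the bijection on iso classes combines essential surjectivity with detection of isomorphisms. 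The main obstacle is (d).

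Parts (a)--(c) are quick. For (a), a composable pair of morphisms in $\mathcal{I}$ fits into a chain whose central arrow has the form $Y\to X'[1]$ with $Y,X'\in\add_{\per(\Gamma)}(\Gamma)$; this vanishes because $\Hom_{\per(\Gamma)}(\Gamma,\Gamma[1])\simeq H^1(\Gamma)=0$ by non-positivity of $\Gamma$. For (b), the same reasoning carried out inside $\per(\bar\Gamma)$, where $\bar\Gamma=H^0(\Gamma)$ is concentrated in degree zero, yields $\Hom_{\per(\bar\Gamma)}(\bar\Gamma,\bar\Gamma[1])=0$, and so $p_*$ annihilates $\mathcal{I}$. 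For (c), the identification $\Hom_{\per(\Gamma)}(\Gamma,\Gamma)=H^0(\Gamma)=\bar\Gamma=\Hom_{\per(\bar\Gamma)}(\bar\Gamma,\bar\Gamma)$ already makes $p_*$ an equivalence of the $\add$-subcategories; lifting any object $[P^{-1}\xrightarrow{\bar a}P^0]\in\K^{[-1,0]}(\proj\bar\Gamma)$ through this equivalence to $a:\tilde P^{-1}\to\tilde P^0$ in $\add\Gamma$ and taking $\Cone(a)\in\per^{[-1,0]}(\Gamma)$ produces a preimage.

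The main work is (d). Fix presentations $A^{-1}\xrightarrow{a}A^0\to A$ and $B^{-1}\xrightarrow{b}B^0\to B$ with components in $\add\Gamma$. My plan is to exploit the equivalence on $\add$-components from (c) together with the vanishings $\Hom_{\per(\Gamma)}(A^i,B^j[1])=0$ for $i,j\in\{-1,0\}$ coming from non-positivity, and to present both $\Hom_{\per(\Gamma)}(A,B)$ and $\Hom_{\K^{[-1,0]}(\proj\bar\Gamma)}(p_*A,p_*B)$ as equivalence classes of the same pairs $(\phi^{-1},\phi^0)$ of $\add\Gamma$-component maps satisfying $\phi^0 a=b\phi^{-1}$. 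The equivalence (c) matches numerators and the null-homotopy relation $(\phi^{-1},\phi^0)\sim(\phi^{-1}+\tilde h a,\phi^0+b\tilde h)$ on both sides. Comparing the two descriptions shows that the kernel of $p_*$ on $\Hom$ is precisely the image of the connecting-map action $\Hom_{\per(\Gamma)}(A^{-1}[1],B)\to\Hom_{\per(\Gamma)}(A,B)$. The heart of the argument is to identify this kernel with $\mathcal{I}(A,B)$: the inclusion $\mathcal{I}(A,B)\subseteq\ker p_*$ is part (b), while the converse amounts to showing that every morphism factoring through $A^{-1}[1]$ can, after adjustment by a suitable null-homotopy lifted from $\K^{[-1,0]}(\proj\bar\Gamma)$ via (c), be rewritten as a composition $A\to A^{-1}[1]\to B^0\to B$, placing it in $\mathcal{I}$ with $X=A^{-1}$ and $Y=B^0$. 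Establishing this rewriting --- which requires pairing the TR3-ambiguity of lifts in $\per(\Gamma)$ against the null-homotopy ambiguity in the quotient, and absorbing the component of $\psi:A^{-1}[1]\to B$ detected by $\delta_B\circ\psi\in\Hom(A^{-1},B^{-1})$ into a lifted null-homotopy --- is the principal technical obstacle. Once it is in place, $\bar p_*$ is fully faithful, hence an equivalence, and the proposition follows.
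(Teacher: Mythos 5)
Your steps (a)--(c) and the derivation of the ``in particular'' clauses from the equivalence are essentially correct. Two small remarks there: in (b) the vanishing you need is $\Hom_{\per(\bar{\Gamma})}(\bar{\Gamma}[1],\bar{\Gamma})\simeq H^{-1}(\bar{\Gamma})=0$, not $\Hom(\bar{\Gamma},\bar{\Gamma}[1])$; this holds because $\bar{\Gamma}$ is concentrated in degree zero, and it is precisely the failure of $H^{-1}(\Gamma)=0$ upstairs that makes $\mathcal{I}$ nonzero in the first place. Note also that the paper does not prove this proposition -- it is quoted from Brüstle--Yang -- but it does prove the companion statement \cref{thekey1} in full, and that proof contains exactly the technique your argument is missing.

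The genuine gap is step (d). You describe a plan (present both Hom-spaces as commuting pairs $(\phi^{-1},\phi^0)$ modulo homotopy and match kernels), but you yourself flag the decisive sub-step -- that every morphism in $\ker\bigl(p_*:\Hom_{\per(\Gamma)}(A,B)\to\Hom(p_*A,p_*B)\bigr)$ can be rewritten, after adjusting by a lifted null-homotopy, as a composite $A\to A^{-1}[1]\to B^0\to B$ -- as an unresolved ``principal technical obstacle.'' That is not a gloss over routine detail: for a genuine non-positive $\Gamma$ the space $\Hom(A^{-1}[1],B^0)\simeq H^{-1}(\cHom(A^{-1},B^0))$ is nonzero, a map $A^{-1}[1]\to B$ need not factor through $B^0$, and the lift $\tilde\psi$ of a morphism killing $A^0$ is only well defined up to the image of $\Hom(A^0[1],B)$, so one must genuinely play the TR3-ambiguity against the homotopy ambiguity downstairs, exactly as you anticipate. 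The way to close this is the argument the paper uses for \cref{thekey1}: apply $\Hom(-,B)$ and $\Hom(p_*(-),p_*B)$ to the conflation $A^{-1}\rightarrowtail A^0\twoheadrightarrow A$, observe that the terms at $A^0$ and $A^{-1}$ are matched isomorphically because $p_*$ is an equivalence on $\add(\Gamma)$ and because $\Hom(A^i,B^{-1}[1])=0$ on both sides, deduce fullness from the Four Lemma, and then read off from the same ladder that the kernel is exactly the image of $\Hom(A^{-1}[1],B^0)\to\Hom(A,B)$, i.e.\ $\mathcal{I}(A,B)$. As it stands, your proposal correctly locates the difficulty but does not overcome it, so the proof is incomplete.
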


\begin{remark}
	The previous proposition and \cref{idealquotient}, imply that the induction functor $p_*$ is an extriangulated functor and that the equivalence in \cref{indc_funct} induces an equivalence of $0$-Auslander extriangulated categories. Indeed, recall than an extriangulated functor between two extriangulated categories $\C$ and $\C'$ is given by an additive functor $\F : \C \rightarrow \C'$ and a natural transformation $\alpha : \EE_{\C} \Rightarrow \EE_{\C'}\circ(\F^{op}\times \F)$ such that for any $X, Z \in \C$ and $\delta \in \EE_{\C}(Z, X)$ realized by a conflation $\con{X}{f}{Y}{g}{Z}{\delta}$, then $\alpha(\delta) \in \EE_{\C'}(\F(Z), \F(X))$ is realized by the conflation $\con{\F(X)}{\F(f)}{\F(Y)}{\F(g)}{\F(Z)}{\alpha(\delta)}$ (\cite[Defintion 2.32]{shah2021transport}). Moreover, an extriangulated functor $(\F, \alpha)$ is an extriangulated equivalence if and only if $\F$ is an additive equivalence and $\alpha$ is a natural isomorphism (\cite[Proposition 2.13]{nakaoka2022localization}). In our case, the induction functor $p_*$ is an additive equivalence between $\per^{[-1,0]}(\Gamma)$ and $\K^{[-1,0]}(\proj H^0(\Gamma))$ by \cref{indc_funct}. The corresponding natural transformation is the one making $p_*: \per(\Gamma) \rightarrow K^b(\proj H^0(\Gamma))$ into a triangulated (and hence extriangulated \cite[Theorem 2.33]{shah2021transport}) functor. By \cite[Theorem 2.8]{fang2023extriangulated} for any $X, Z \in \per^{[-1,0]}(\Gamma)$ we have that 
	\[\EE_{\per^{[-1,0]}(\Gamma)/\mathcal{I}}(Z,X) = \EE_{\per^{[-1,0]}(\Gamma)}(Z,X),\]
	and that conflations in $\per^{[-1,0]}(\Gamma)/\mathcal{I}$ are precisely the image of those in $\per^{[-1,0]}(\Gamma)$. Thus, the only thing left to verify is that for any $X, Z \in \per^{[-1,0]}(\Gamma)$ the natural transformation associated to $p_*$ induces an isomorphism between \[ \Hom_{\per(\Gamma)}(Z, X[1]) \simeq \Hom_{K^b(\proj H^0(\Gamma))}(p_*(Z), p_*(X)[1]). \] This follows essentially from \cite[Proposition A.4]{brustle2013ordered}. We include a proof for the convenience of the reader. 
\end{remark}

\begin{proposition}\cite[Proposition A.4]{brustle2013ordered}\label{thekey1}
	Let $X, Z \in \per^{[-1,0]}(\Gamma)$. Then the functor $p_*$ induces an isomorphism 
	\[ \Hom_{\per(\Gamma)}(Z, X[1]) \simeq \Hom_{K^b(\proj H^0(\Gamma))}(p_*(Z), p_*(X)[1]). \]
\end{proposition}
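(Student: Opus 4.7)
My plan is to reduce the problem, via the $0$-Auslander projective presentations in $\per^{[-1,0]}(\Gamma)$, to the equivalence $\add_{\per(\Gamma)}(\Gamma) \simeq \add_{\per(\bar{\Gamma})}(\bar{\Gamma})$ recorded in the paragraph preceding \cref{indc_funct} (where $\bar{\Gamma} = H^0(\Gamma)$). First I fix conflations $Y' \rightarrowtail Y \twoheadrightarrow Z$ and $X' \rightarrowtail X_0 \twoheadrightarrow X$ with $Y, Y', X_0, X' \in \add(\Gamma)$, which exist because $\per^{[-1,0]}(\Gamma)$ has enough projectives. These lift to triangles in $\per(\Gamma)$, and applying the triangulated functor $p_*$ produces the corresponding conflations in $K^{[-1,0]}(\proj \bar{\Gamma})$ with terms in $\add(\bar{\Gamma})$.

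The key preliminary vanishing is $\Hom_{\per(\Gamma)}(W, X[1]) = 0$ for any $W \in \add(\Gamma)$. Applying $\Hom(W,-)$ to the triangle $X' \to X_0 \to X \to X'[1]$ reduces this to the vanishing of $\Hom(W, X_0[1])$ and $\Hom(W, X'[2])$, which in turn follows from non-positivity of $\Gamma$: $\Hom_{\per(\Gamma)}(\Gamma, \Gamma[i]) \cong H^i(\Gamma) = 0$ for $i > 0$. The analogous vanishing in $K^b(\proj \bar{\Gamma})$ is immediate since projective modules have no positive self-extensions.

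Applying $\Hom_{\per(\Gamma)}(-, X[1])$ to the triangle $Y' \to Y \to Z \to Y'[1]$ and using this vanishing yields an exact sequence
\[\Hom_{\per(\Gamma)}(Y, X) \longrightarrow \Hom_{\per(\Gamma)}(Y', X) \longrightarrow \Hom_{\per(\Gamma)}(Z, X[1]) \longrightarrow 0,\]
and the same holds in $K^b(\proj\bar{\Gamma})$ after applying $p_*$. Since cokernels of naturally isomorphic maps are isomorphic, the desired claim follows once I establish $\Hom_{\per(\Gamma)}(W, X) \simeq \Hom_{K^b(\proj\bar{\Gamma})}(p_*(W), p_*(X))$ for $W \in \add(\Gamma)$. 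A second parallel cokernel argument, applying $\Hom(W,-)$ to the conflation for $X$ and using $\Hom(W, X'[1]) = 0$ (which holds directly since $W, X' \in \add(\Gamma)$ and $\Gamma$ is non-positive), reduces this further to isomorphisms $\Hom(W, W') \simeq \Hom(p_*(W), p_*(W'))$ for $W, W' \in \add(\Gamma)$, which is exactly the already-recorded equivalence on $\add(\Gamma)$. The main obstacle is organizational rather than conceptual: verifying that every cokernel diagram commutes under $p_*$, which is forced by functoriality together with $p_*$ being triangulated; beyond non-positivity of $\Gamma$ no further input is needed.
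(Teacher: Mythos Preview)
Your proof is correct and follows essentially the same strategy as the paper: both reduce to the equivalence $\add_{\per(\Gamma)}(\Gamma)\simeq\add_{\per(\bar\Gamma)}(\bar\Gamma)$ by resolving $X$ and $Z$ inside $\per^{[-1,0]}(\Gamma)$ and comparing the resulting long exact $\Hom$-sequences under $p_*$. The only cosmetic difference is that the paper uses the ``mixed'' conflations $X'\rightarrowtail X\twoheadrightarrow X''[1]$ with $X',X''\in\add(\Gamma)$ and invokes the Four Lemma, whereas you use projective resolutions $X'\rightarrowtail X_0\twoheadrightarrow X$ together with the vanishing $\Hom(\add(\Gamma),X[1])=0$ to obtain cokernel presentations directly.
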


\begin{proof}
	Let $X, Z \in \per^{[-1,0]}(\Gamma)$. There are conflations
	\begin{gather}\label{conf1}
		\scon{X'}{}{X}{}{X''[1]}
	\end{gather}
	\begin{gather}\label{conf2}
		\scon{Z'}{}{Z}{}{Z''[1]}
	\end{gather} with $X', X'', Z', Z'' \in \add_{\per(\Gamma)}(\Gamma)$. By applying the functor $\Hom_{\per(\Gamma)}(-, X''[1])$ and $\Hom_{K^b(\proj H^0(\Gamma))}(-, p_*(X'')[1])$ to the conflation \ref{conf2} and its image under $p_*$, we get the following commutative diagram with exact rows
	
\begin{center}
	\makebox[\textwidth][c]{\includegraphics[width=1.4\textwidth]{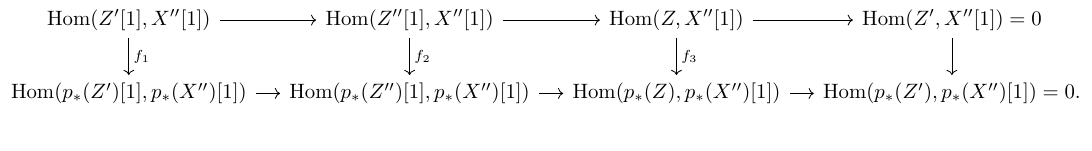}}
\end{center}
\vspace*{-2mm}

Since $p_*$ is an additive equivalence between $\add_{\per(\Gamma)}(\Gamma)$ and $\add_{\per(\bar{\Gamma})}(\bar{\Gamma})$, both $f_1$ and $f_2$ are isomorphisms, and by the Four Lemma then so if $f_3$. By applying $\Hom_{\per(\Gamma)}(-, X'[1])$ and $\Hom_{K^b(\proj H^0(\Gamma))}(-, p_*(X')[1])$ and using a similar argument, we get that $p_*$ induces an isomorphism $\Hom(Z, X'[1]) \xrightarrow{g_3} \Hom(p_*(Z), p_*(X')[1])$. Now we apply $\Hom_{\per(\Gamma)}(Z, -)$ and $\Hom_{K^b(\proj H^0(\Gamma))}(p_*(Z), -)$ to the conflation \ref{conf1} and its image under $p_*$, which produces de following commutative diagram with exact rows 

\begin{center}
	\makebox[\textwidth][c]{\includegraphics[width=1.3\textwidth]{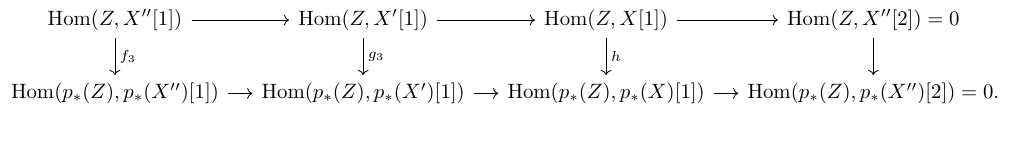}}
\end{center}

Since $f_3$ and $g_3$ are isomorphisms, by the Four Lemma then so is $h$. 

\end{proof}

\begin{corollary}\label{thekey}
	Let $X, Z \in \pero(\Gamma)$ and suppose there is a conflation
	\begin{equation}\label{conflation}
		\con{p_*(X)}{}{\bar{Y}}{}{p_*(Z)}{f}
	\end{equation} 
	where $\bar{Y} \in \K^{[-1,0]}(\proj H^0(\Gamma))$. Then there exists $Y \in \pero(\Gamma)$ and a conflation $$\con{X}{}{Y}{}{Z}{F}$$ whose image by $p_*$ is the conflation \ref{conflation}. 
\end{corollary}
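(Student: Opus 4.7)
The plan is to lift the extension class realizing the given conflation, realize it as a conflation in $\pero(\Gamma)$, and then verify that applying $p_*$ recovers the original.

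First I would observe that the conflation \ref{conflation} is realized by some extension class
\[
f \in \EE_{\K^{[-1,0]}(\proj H^0(\Gamma))}(p_*(Z), p_*(X)) = \Hom_{\K^b(\proj H^0(\Gamma))}(p_*(Z), p_*(X)[1]).
\]
By \cref{thekey1}, the functor $p_*$ induces an isomorphism $\Hom_{\per(\Gamma)}(Z, X[1]) \simeq \Hom_{\K^b(\proj H^0(\Gamma))}(p_*(Z), p_*(X)[1])$, so there exists a unique $F \in \Hom_{\per(\Gamma)}(Z, X[1])$ with $p_*(F) = f$.

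Next, I would complete $F$ to a triangle $X \to Y \to Z \xrightarrow{F} X[1]$ in the triangulated category $\per(\Gamma)$. The key point here is that $\pero(\Gamma) = \Gamma * \Gamma[1]$ is extension-closed in $\per(\Gamma)$: given the defining triangles for $X$ and $Z$ with outer terms in $\add(\Gamma)$, an octahedral argument shows that $Y$ admits such a presentation as well, so $Y \in \pero(\Gamma)$. Consequently, the triangle above restricts to a conflation $\con{X}{}{Y}{}{Z}{F}$ in the extriangulated category $\pero(\Gamma)$.

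Finally, since $p_*$ is an extriangulated functor between $\pero(\Gamma)$ and $\K^{[-1,0]}(\proj H^0(\Gamma))$ (as noted in the remark following \cref{indc_funct}), applying it to the above conflation yields a conflation $\con{p_*(X)}{}{p_*(Y)}{}{p_*(Z)}{p_*(F)}$ whose extension class is precisely $p_*(F) = f$. Two conflations with the same end terms and the same extension class are isomorphic (as conflations, via the identity on the outer terms), so this is, up to isomorphism, the conflation \ref{conflation}.

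The main subtlety I expect is the extension-closedness of $\pero(\Gamma)$ inside $\per(\Gamma)$, which is what guarantees that $Y$ stays in our two-term subcategory; the rest of the argument is a formal consequence of \cref{thekey1} together with the general fact that extension classes classify conflations up to isomorphism.
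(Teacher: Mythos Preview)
Your argument is correct and follows the same route as the paper: lift the extension class $f$ to $F$ via \cref{thekey1}, take $Y = \CCone(F)$, and use that $p_*$ is extriangulated to recover the original conflation. The paper's proof is more terse and does not spell out the extension-closedness of $\pero(\Gamma)$ or the uniqueness of conflations up to isomorphism, but these are exactly the points you justify, so your version is simply a more detailed rendering of the same proof.
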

	
\begin{proof}
 Let $f \in \Hom_{\K^{b}(\proj H^0(\Gamma))}(p_*(Z), p_*(X)[1])$ and $\con{p_*(X)}{}{\bar{Y}}{}{p_*(Z)}{f}$ be a conflation realizing $f$. By \cref{thekey1} there exist $F \in \Hom_{\per(\Gamma)}(Z, X[1])$ such that $p_*(F) = f$. By letting $Y = \CCone(F)$ we get the result.
\end{proof}

We end this section with the following result by T.~Brüstle and D.~Yang, which relates the set of isomorphism classes of $2$-term silting objects in $\per^{[-1,0]}(\Gamma)$ and that of $\per^{[-1,0]}(\bar{\Gamma})$. 
\begin{proposition}\cite[Proposition A.3]{brustle2013ordered}\label{ind_silting}
	The induction functor $p_*: \per(\Gamma) \rightarrow \K^b(\proj \bar{\Gamma})$ induces a bijection between the sets of isomorphism classes of $2$-term silting objects $\tsilt \Gamma$ and $\tsilt \bar{\Gamma}$. 
\end{proposition}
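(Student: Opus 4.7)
My plan is to combine Propositions \ref{indc_funct}, \ref{thekey1}, and \ref{maximal_silt}. The induced map on isomorphism classes of $2$-term silting objects will be obtained by restricting $p_*$, with well-definedness in both directions coming from the identification of extension groups, and the silting condition bootstrapped from the presilting condition via the maximality criterion.

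First I would check that $p_*$ sends $\tsilt \Gamma$ into $\tsilt \bar{\Gamma}$. Fix a basic $2$-term silting object $T \in \per^{[-1,0]}(\Gamma)$. Since $p_*$ is extriangulated and takes $\add_{\per(\Gamma)}(\Gamma)$ to $\add_{\per(\bar{\Gamma})}(\bar{\Gamma})$, the image $p_*(T)$ lies in $\K^{[-1,0]}(\proj \bar{\Gamma})$. Proposition \ref{thekey1} identifies
\[
\Hom_{\K^b(\proj \bar{\Gamma})}(p_*(T), p_*(T)[1]) \simeq \Hom_{\per(\Gamma)}(T, T[1]) = 0,
\]
so $p_*(T)$ is $2$-term presilting. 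To promote this to the silting condition it is enough, by Proposition \ref{maximal_silt} applied in $\per(\bar{\Gamma})$, to check that $|p_*(T)| = |\bar{\Gamma}|$. Since $p_*$ preserves indecomposability and is a bijection on isomorphism classes of objects of $\per^{[-1,0]}$ by Proposition \ref{indc_funct}, and since $p_*(\Gamma) = \bar{\Gamma}$, we obtain $|p_*(T)| = |T| = |\Gamma| = |\bar{\Gamma}|$.

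Injectivity of the induced map is then immediate from the injectivity on isomorphism classes of objects already granted by Proposition \ref{indc_funct}. For surjectivity, given $\bar{T} \in \tsilt \bar{\Gamma}$, Proposition \ref{indc_funct} produces $T \in \per^{[-1,0]}(\Gamma)$ with $p_*(T) \simeq \bar{T}$. Proposition \ref{thekey1} then lifts the vanishing $\Hom(\bar{T}, \bar{T}[1]) = 0$ back to $\Hom_{\per(\Gamma)}(T, T[1]) = 0$, so $T$ is presilting in $\per(\Gamma)$. Counting indecomposable summands gives $|T| = |\bar{T}| = |\bar{\Gamma}| = |\Gamma|$, and Proposition \ref{maximal_silt} upgrades $T$ to a $2$-term silting object.

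The key technical hinge is Proposition \ref{thekey1}: without its identification of extension groups upstairs and downstairs, the backward direction, turning a presilting object downstairs into a presilting object upstairs, would not be available, and this is the step I would expect to be most delicate. Once that identification is in place, Proposition \ref{maximal_silt} converts the homological vanishing into the silting property by a purely numerical check, with the bookkeeping of indecomposable summands furnished by Proposition \ref{indc_funct}.
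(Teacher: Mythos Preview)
Your proof is correct and assembles exactly the right ingredients. Note that the paper itself does not supply a proof of this proposition: it is quoted directly from \cite[Proposition~A.3]{brustle2013ordered}, and the surrounding Propositions~\ref{indc_funct} and~\ref{thekey1} that you invoke are likewise taken from that appendix, so what you have written is essentially a reconstruction of the original Br\"ustle--Yang argument from the pieces already recorded in the preliminaries. One small point worth making explicit: you only check vanishing of $\Hom(-,-[1])$, but for objects in $\Gamma * \Gamma[1]$ (respectively $\bar{\Gamma} * \bar{\Gamma}[1]$) the vanishing of $\Hom(-,-[i])$ for $i \geq 2$ is automatic, so the presilting condition really does reduce to the single degree you verify.
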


\section{Silting reduction in $\Kproj$}\label{sec_reduction}

The goal of this section is to explicitly describe the reduction of $\Kproj$ with respect to a presilting object $U$. Reductions for hereditary extriangulated categories were treated in general in \cite[Section 2.2.3]{gorsky2023hereditary}. In our setting, we show that the reduction of $\Kproj$ by a $2$-term presilting complex is equivalent to the $2$-term category of perfect complexes over a non-positive dg algebra. This is done using Iyama-Yoshino reduction in $\Kb$ \cite{yoshino2008mutation} as well as Iyama-Yang's results showing that the Verdier localisation by a presilting object is a reduction \cite{iyama2018silting}. We show that both operations are compatible with those in \cite{gorsky2023hereditary}. This is a particular case of the reduction of a $0$-Auslander triangulated category $\K$ with respect to a presilting object $U$ being equivalent to the Verdier localization $\K/\thi(U)$, which was shown in general in \cite{borve2024silting}.

\subsection{Thick subcategories generated by $2$-term presilting complexes}

Let $\HH$ be a full subcategory of an extriangulated category $\K$. We say that $\HH$ is \textit{closed under extensions} if for every conflation $\scon{X}{f}{Y}{g}{Z}$ such that $X, Z \in \HH$, then $Y \in \HH$ as well. We say that $\HH$ is \textit{closed under cones} (resp. \textit{closed under cocones}) if for any inflation $\infl{X}{f}{Y}$ (resp. deflation $\defl{X}{g}{Y}$) with $X, Y \in \HH$ we have that $\Cone(f) \in \HH$ (resp. $\CCone(g) \in \HH$). 

\begin{definition}\cite{nakaoka2022localization} Let $\K$ be an extriangulated category. We say that a full subcategory $\HH \subset \K$ is \textit{thick} if it is stable under finite direct sums and direct summands, and closed under extensions, cones and cocones. 
	
\end{definition}

If $\D$ is a triangulated category with shift functor $\Sigma$, then a full subcategory $\T$ of $\D$ is thick (with respect to the previous definition) if and only if it is a triangulated subcategory which is stable under direct summands. Recall that if $\K \subset \D$ is a full and closed under extensions, then it is extriangulated \cite{nakaoka2019extriangulated} with respect to $\Sigma|_{\K}$. The next proposition follows from the definitions. 
\begin{proposition}\label{interthick}
	Let $\T \subset \D$ be a thick subcategory of $\D$ and let $\K \subset \D$ be closed under extensions and direct summands. Then $\T \cap \K$ is a thick subcategory of the extriangulated category $\K$. 
\end{proposition}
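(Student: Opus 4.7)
The plan is to verify each of the five closure conditions in the definition of a thick subcategory of an extriangulated category, applied to $\T \cap \K$ as a subcategory of $\K$. The key preliminary observation is that the extriangulated structure on $\K$ is induced by the triangulated structure of $\D$ via extension-closure: a conflation $\scon{X}{f}{Y}{g}{Z}$ in $\K$ is, by definition, a distinguished triangle $X \to Y \to Z \to X[1]$ of $\D$ whose three vertices all happen to lie in $\K$, and similarly inflations and deflations in $\K$ come with their cones (resp. cocones) already in $\K$. Consequently, every closure property to be checked in $\K$ reduces, on the $\T$-side, to a property of $\T$ as a triangulated subcategory of $\D$, since membership in $\K$ is automatic for the object produced in each case.

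Closure of $\T \cap \K$ under finite direct sums and direct summands I would dispatch first: $\T$ is thick in the triangulated sense so it is closed under both, while $\K$ is closed under summands by hypothesis and under direct sums as an extension-closed subcategory (a split triangle realises $X \oplus Y$ as an extension of $Y$ by $X$). For extension-closure, if $\scon{X}{f}{Y}{g}{Z}$ is a conflation in $\K$ with $X, Z \in \T \cap \K$, then $Y \in \K$ by the very definition of a conflation in $\K$, and $Y \in \T$ because the associated distinguished triangle in $\D$ has two vertices in the triangulated subcategory $\T$. For cone-closure, given an inflation $\infl{X}{f}{Y}$ in $\K$ with $X, Y \in \T \cap \K$, the conflation $\scon{X}{f}{Y}{}{\Cone(f)}$ places $\Cone(f)$ in $\K$ automatically and in $\T$ by the triangulated thickness of $\T$; cocone-closure is the dual statement.

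I do not anticipate a real obstacle here: once one has unwound that the extriangulated structure on $\K$ encodes exactly those triangles of $\D$ with all three vertices in $\K$, the problem collapses to the well-known fact that a triangulated subcategory closed under summands is stable under taking the third vertex of any distinguished triangle whose other two vertices it contains. The only small care required is to keep track of the distinction between an inflation/deflation \emph{in $\K$}, for which the cone or cocone is in $\K$ by definition, and an arbitrary morphism of $\K$, whose cone in $\D$ need not lie in $\K$ at all.
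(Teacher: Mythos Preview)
Your proposal is correct and matches the paper's approach: the paper simply states that the proposition ``follows from the definitions'' and gives no further argument, so you have spelled out exactly the routine verification the paper leaves implicit.
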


In what follows, $\D = \Kb$ and $\K = \Kproj$. Since we will work with the notion of thick subcategory in the triangulated category $\Kb$ as well as the notion of thick subcategory in the extriangulated category $\Kproj$, to avoid confusion, for any subcategories $\C \in \Kb$ and $\HH \in \Kproj$ we will denote $\thi_b(\C)$ the smallest (triangulated) thick subcategory in $\Kb$ containing $\C$, and by $\thi_{[-1,0]}(\HH)$ the smallest (extringulated) thick subcategory  of $\Kproj$ that contains $\HH$. The following proposition relates both notions. 

\begin{lemma}\label{explicitthick}
	Let $U \in \Kproj$ be a $2$-term presilting object and $\U=\add(U)$, then 
	\[ \thi_{[-1,0]}(U)= \thi_b(U) \cap \Kproj = \left(\U[-1] * \U * \U[1]\right) \cap \Kproj. \]
\end{lemma}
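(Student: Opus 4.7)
The plan is to prove the two set-equalities through four inclusions.

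The inclusions $\thi_{[-1,0]}(U) \subseteq \thi_b(U) \cap \Kproj$ and $(\U[-1] * \U * \U[1]) \cap \Kproj \subseteq \thi_b(U) \cap \Kproj$ are the easy ones: the first is an immediate application of \cref{interthick} to $\thi_b(U) \subseteq \Kb$, since $\thi_b(U) \cap \Kproj$ is thick in $\Kproj$ and contains $U$; the second is clear, because $\thi_b(U)$ is triangulated and already contains all shifts of $U$ together with their extensions.

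For $(\U[-1] * \U * \U[1]) \cap \Kproj \subseteq \thi_{[-1,0]}(U)$, write any such $X$ as the middle term of a triangle $Y \to X \to D \to Y[1]$ in $\Kb$ with $Y \in \U[-1] * \U$ and $D \in \U[1]$. Rotating yields $D[-1] \to Y \to X \to D$, and since $D[-1] \in \U \subseteq \Kproj$ and $X \in \Kproj$, the extension-closure of $\Kproj$ in $\Kb$ forces $Y \in \Kproj$, giving a conflation $D[-1] \rightarrowtail Y \twoheadrightarrow X$ in $\Kproj$. Writing further $A \to Y \to C \to A[1]$ with $A \in \U[-1]$, $C \in \U$, and rotating, gives a conflation $Y \rightarrowtail C \twoheadrightarrow A[1]$ in $\Kproj$ between objects of $\U \subseteq \thi_{[-1,0]}(U)$. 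Closure of $\thi_{[-1,0]}(U)$ under cocones then places $Y$ inside $\thi_{[-1,0]}(U)$, and closure under cones lands $X$ there as well.

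The main work is the remaining inclusion $\thi_b(U) \cap \Kproj \subseteq \U[-1] * \U * \U[1]$. The presilting hypothesis $\Hom(U, U[k]) = 0$ for $k > 0$ implies $\U[a] * \U[b] \subseteq \U[b] * \U[a]$ whenever $a > b$ (such extensions split, and a split extension lies in either order), so iterated extensions of shifts of $U$ can be rearranged, with equal adjacent layers collapsed, into a strictly increasing filtration. Hence every $X \in \thi_b(U)$ lies in some $\U[a_1] * \cdots * \U[a_k]$ with $a_1 < \cdots < a_k$. Now take such $X$ also in $\Kproj = \K^{[-1,0]}(\proj \Lambda)$: chosen representatives of $X$ live in complex-degrees $[-1,0]$ while those of $U[a]$ live in $[-1-a, -a]$, which are disjoint whenever $|a| \geq 2$, so all chain maps vanish degree-wise, giving $\Hom_{\Kb}(X, U[a]) = 0$ for $a \geq 2$ and $\Hom_{\Kb}(U[a], X) = 0$ for $a \leq -2$. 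Applying the first vanishing to the truncation triangle $X'' \to X \to U^{(a_k)} \to X''[1]$ (with $X'' \in \U[a_1] * \cdots * \U[a_{k-1}]$) in the case $a_k \geq 2$ forces $X \to U^{(a_k)}$ to be zero, so the triangle splits as $X'' \cong X \oplus U^{(a_k)}[-1]$. Since the weight intervals $\U[a_1] * \cdots * \U[a_k]$ are closed under direct summands (a standard consequence of the bounded co-$t$-structure on $\thi_b(U)$ with coheart $\U$, coming from the silting object $U$ of $\thi_b(U)$), one concludes $X \in \U[a_1] * \cdots * \U[a_{k-1}]$. Iterating and running the symmetric argument on the bottom (using the vanishing of $\Hom_{\Kb}(U^{(a_1)}, X)$ for $a_1 \leq -2$) shrinks the filtration to $-1 \leq a_1$ and $a_k \leq 1$, so $X \in \U[-1] * \U * \U[1]$.

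The main obstacle is this last inclusion, and specifically the direct-summand closure of the weight intervals $\U[a_1] * \cdots * \U[a_k]$ needed to iterate the splitting: the cleanest way to get this is to appeal to the silting co-$t$-structure on $\thi_b(U)$, whose truncation functors render the intervals closed under summands.
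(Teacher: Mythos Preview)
Your proof is correct and follows essentially the same route as the paper's: for the hard inclusion $\thi_b(U) \cap \Kproj \subseteq \U[-1] * \U * \U[1]$ both arguments write $X$ in a finite interval of the silting filtration, use the degree-disjointness of $2$-term complexes to kill the extreme maps, split off the outer layers, and invoke direct-summand closure of the weight intervals (you via the co-$t$-structure on $\thi_b(U)$, the paper via \cref{thickdes}) to iterate. The only cosmetic differences are that the paper cites \cref{thickdes} directly rather than rederiving the increasing filtration, and for the inclusion $(\U[-1]*\U*\U[1])\cap\Kproj \subseteq \thi_{[-1,0]}(U)$ the paper peels off the $\U[-1]$ layer first while you peel off the $\U[1]$ layer first.
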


In order to prove \cref{explicitthick} we will make use of the following proposition. 

\begin{proposition} \label{thickdes} Let $\D$ be a Hom-finite, Krull-Schmidt triangulated category. If $\U = \add(\U)$ is a presilting subcategory of $\C$, then 
	\begin{enumerate}[(i)]
		\item \cite[Propositions 2.7]{iyama2018silting} For all $n \leq 0$, $$\U  * \U[1] * \cdots *\U[n] = \add(\U  * \U[1] * \cdots *\U[n]) .$$
		\item \cite[Propositions 2.15]{aihara2012silting}
		\[ \begin{split}
			\thi_\D(\U)   &  = \bigcup_{n \geq 0} \add\left(\U[-n] * \U[1-n]* \cdots * \U[n-1] * \U[n]\right) \\
			& = \bigcup_{n \geq 0} \U[-n] * \U[1-n]* \cdots * \U[n-1] * \U[n].
		\end{split} \]
	\end{enumerate}
\end{proposition}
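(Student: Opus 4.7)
The plan is to handle the two items in turn, both exploiting that $\D$ is Hom-finite Krull--Schmidt (so right $\U$-approximations exist and minimal ones are essentially unique) together with the presilting identity $\Hom_\D(\U,\U[k]) = 0$ for $k > 0$.

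For item (i), I would argue by induction on $n \geq 0$. The base case $n = 0$ is the hypothesis $\U = \add(\U)$. For the inductive step, fix $X \in \U*\U[1]*\cdots*\U[n]$ with a defining triangle $U_0 \to X \to Y \to U_0[1]$, where $U_0 \in \U$ and $Y \in \U[1]*\cdots*\U[n]$. Iterating the presilting identity along the filtration of $Y$ yields $\Hom(\U, Y) = 0$, so any morphism $U' \to X$ with $U' \in \U$ factors through $U_0$; in particular $U_0 \to X$ is a right $\U$-approximation. If now $X = X_1 \oplus X_2$, pick minimal right $\U$-approximations $A_i \to X_i$. By Krull--Schmidt and minimality, $A_1 \oplus A_2$ is a minimal right $\U$-approximation of $X$, so its cocone decomposes as $Y_1 \oplus Y_2$, where $Y_i$ is the cocone of $A_i \to X_i$. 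Applying the inductive hypothesis to the shifted presilting subcategory $\U[1]$ at length $n-1$ gives $Y_i \in \U[1]*\cdots*\U[n]$, so $X_i \in \U*\U[1]*\cdots*\U[n]$, as required.

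For item (ii), write $\Ss_n := \U[-n]*\U[1-n]*\cdots*\U[n]$. The first equality $\thi_\D(\U) = \bigcup_{n \geq 0} \add(\Ss_n)$ is obtained by checking that $\bigcup_n \add(\Ss_n)$ is a thick triangulated subcategory of $\D$ containing $\U$: stability under shift is transparent, since $\Ss_n[1] \subset \Ss_{n+1}$; stability under summands is built in; and stability under extensions reduces to showing $\Ss_n * \Ss_m \subset \Ss_N$ for some $N = N(n,m)$. The second equality --- that the $\add$ may be removed --- is the bilateral analogue of item (i): each $\Ss_n$ is already closed under direct summands, proven by the same right-approximation argument, now applied to the leftmost piece $\U[-n]$ (which, up to a global shift, is again presilting) and inducting on the filtration length.

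The main obstacle I expect is the reordering step needed for extension-closure in item (ii), namely the inclusion $\Ss_n * \Ss_m \subset \Ss_N$. This requires merging two concatenated bilateral filtrations into a single one of symmetric range, and is carried out in \cite{aihara2012silting} by a careful induction on filtration length, crucially exploiting the presilting identity to control extensions between different shifts of $\U$ and to absorb misordered adjacent factors into a longer well-ordered filtration.
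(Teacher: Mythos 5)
The paper offers no proof of this proposition — it is quoted directly from Iyama--Yang and Aihara--Iyama — so there is no in-house argument to compare against; your sketch reproduces the standard proofs from those references and is correct. The only compressed step in (i) is the link between $Y$ and $Y_1\oplus Y_2$: since $U_0\to X$ is a right $\U$-approximation, it splits as (minimal approximation)$\,\oplus\,$(zero map from some $U'\in\U$), so $Y\cong Y_1\oplus Y_2\oplus U'[1]$ and $Y_i$ is a direct summand of $Y$, which is what licenses the appeal to the inductive hypothesis; in (ii) you correctly identify the reordering $\U[i]*\U[j]\subseteq\U[j]*\U[i]$ for $i\ge j$ (forced by the splitting of the classifying extension under the presilting condition) as the essential point and defer it to the cited source.
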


\begin{proof}[Proof of \cref{explicitthick}]
	It follows from the definitions that $ \thi_{[-1,0]}(U) \subset \thi_b(U) \cap \Kproj.$ Let $X \in \left(\U[-1] * \U * \U[1]\right) \cap \Kproj$, then there exists a triangle
	\begin{equation}\label{primertriangulo}
		 V[-1] \rightarrow X \rightarrow Y \dashrightarrow V
	\end{equation}
	where $V \in \U$ and $Y \in \U * \U[1]$. In particular $Y$ is a $2$-term complex in $\U * \U[1]$. By definition, there exists a triangle 
	\begin{equation}\label{segundotriang}
		\bar{U}' \rightarrow Y \rightarrow \bar{U}[1] \dashrightarrow \bar{U}'[1]
	\end{equation}
	with $\bar{U}, \bar{U}' \in \U$. A rotation of the previous triangle gives a conflation $\bar{U} \rightarrowtail \bar{U}' \twoheadrightarrow Y $ in $\Kproj$, which implies that $Y \in \thi_{[-1,0]}(U)$. By rotating the triangle \ref{primertriangulo}, we get $X \rightarrowtail Y \twoheadrightarrow V$, implying that $X \in \thi_{[-1,0]}(U)$. Thus $$\left(\U[-1] * \U * \U[1]\right) \cap \Kproj \subset \thi_{[-1,0]}(U).$$
	
	We show now that $\thi_b(U) \cap \Kproj \subset \U[-1] * \U * \U[1]$. Since $\U$ is presilting, by \cref{thickdes} we have that 
	\[ \begin{split}
		\thi_b(U) & = \thi_b(\U)  = \\ 
		& = \bigcup_{n \geq 0} \U[-n] * \U[1-n]* \cdots * \U[n-1] * \U[n].
	\end{split} \]
	Let $X \in \thi_b(U) \cap \Kproj$. Then there exists $n$ such that $X \in \U[-n] * \U[1-n]* \cdots * \U[n-1] * \U[n] $. Since taking extensions is an associative operation, we can find a triangle 
	$$U'[-n] \xrightarrow{f} X \rightarrow X' \dashrightarrow U'[1-n]$$
	where $U' \in \U $ and $X' \in \U[1-n]* \cdots * \U[n-1] * \U[n]$. Suppose that $n>1$. Since both $X$ and $U$ are $2$-term complexes, $X \in  \prescript{\perp}{}{\U[\leq 2]}\cap \U[\geq 2]^\perp $. This implies that $f = 0$ and thus, $X$ is a direct summand of $X'$. By \cref{thickdes} i), we know that $\U[1-n]* \cdots * \U[n-1] * \U[n] = \add(\U[1-n]* \cdots * \U[n-1] * \U[n])$ and thus $X \in \U[1-n]* \cdots * \U[n-1] * \U[n]$. By applying the previous argument whenever $i-n < -1$, we can deduce that $X \in \U[-1] * \U * \cdots * \U[n-1] * \U[n]$. Using again the associativity of taking extension, we can find a triangle
	\[ X'' \rightarrow X \xrightarrow{g} U''[n] \dashrightarrow X''[1].\]
	where $X'' \in \U[-1] * \U * \cdots * \U[n-1]$ and $U'' \in \U$. Once more, if $n >1$ we deduce that $g=0$, since both $X$ and $U''$ are $2$-term complexes, and thus $X$ is a direct summand of $X'' \in \U[-1] * \U * \cdots * \U[n-1] = \add(\U[-1] * \U * \cdots * \U[n-1])$. We conclude that $X \in \U[-1] * \U * \cdots * \U[n-1]$. Applying this argument recursively whenever $n-i>1$, we finally get that $X \in \U[-1] * \U * \U[1]$. 
	We conclude that 
	\begin{align*}
		\left(\U[-1] * \U * \U[1]\right) &\cap \Kproj \subset  \thi_{[-1,0]}(U)  \\ &\subset \thi_b(U) \cap \Kproj  \subset \left(\U[-1] * \U * \U[1]\right) \cap \Kproj ,
	\end{align*}
	which gives the result. 
	
\end{proof}

\begin{corollary}\label{thickisthick}
	Let $\HH \subset \Kproj$ be a thick subcategory and consider $U \in \HH$ a 2-term presilting object. If  $\thi_b(U) = \thi_b(\HH)$, then $\thi_{[-1,0]}(U) = \HH$.
\end{corollary}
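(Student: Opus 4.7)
The plan is to deduce the statement as a fairly direct consequence of \cref{explicitthick}, using both inclusions in a symmetric way.

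First I would establish the easy inclusion $\thi_{[-1,0]}(U) \subset \HH$. Since $U \in \HH$ and $\HH$ is by hypothesis a thick extriangulated subcategory of $\Kproj$ containing $U$, and $\thi_{[-1,0]}(U)$ is by definition the smallest such subcategory, this inclusion is immediate.

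For the reverse inclusion $\HH \subset \thi_{[-1,0]}(U)$, I would take an arbitrary $X \in \HH$ and use \cref{explicitthick} to rewrite
\[ \thi_{[-1,0]}(U) = \thi_b(U) \cap \Kproj. \]
Since $\HH \subset \Kproj$, I already have $X \in \Kproj$. On the other hand, $\HH$ is contained in $\thi_b(\HH)$ (which is the smallest triangulated thick subcategory of $\Kb$ containing $\HH$), so $X \in \thi_b(\HH)$. Using the hypothesis $\thi_b(U) = \thi_b(\HH)$, I conclude $X \in \thi_b(U) \cap \Kproj = \thi_{[-1,0]}(U)$.

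There is no real obstacle here: the substantive content lies entirely in \cref{explicitthick}, which identifies the extriangulated thick closure of a $2$-term presilting object with the intersection of its triangulated thick closure with $\Kproj$. Once that lemma is in hand, the corollary follows by combining the minimality of $\thi_{[-1,0]}(U)$ among thick extriangulated subcategories containing $U$ with the hypothesis that $\thi_b$ cannot distinguish $U$ from $\HH$.
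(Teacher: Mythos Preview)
Your proposal is correct and follows essentially the same argument as the paper: both directions are handled exactly as you describe, with the key input being \cref{explicitthick} to identify $\thi_{[-1,0]}(U)$ with $\thi_b(U)\cap\Kproj$.
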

\begin{proof}
	
	Suppose $\thi_b(U) = \thi_b(\HH)$ and let $\U = \add(U)$. Since $U \in \HH$ and since $\HH$ is thick, we have that $\thi_{[-1,0]}(U )\subset \HH$. For the other inclusion note that $\HH \subset \thi_b(\HH) \cap \Kproj = \thi_b(U) \cap \Kproj$. By \cref{explicitthick}, $\thi_b(U) \cap \Kproj = \thi_{[-1,0]}(U)$, which gives the result. 
\end{proof}

\subsection{Silting reduction in $\Kproj$}

Before introducing the notion of reduction in $\Kproj$, let us first recall O.~Iyama and D.~Yang's additive description of the reduction of a triangulated category with respect to a presilting subcategory. 

\begin{theorem}\cite[Theorem 1.1]{iyama2018silting} \label{iyama_theo_reduction}
	Let $\D$ be a triangulated category and let $\U$ be a presilting subcategory of $\D$ satisfying certain mild assumptions\footnote{These assumptions are satisfied if, for instance, $\D$ is Hom-finite over a field and $\U= \add(U)$ for certain $U \in \D$ that can be completed into a silting object. Since we are working in the context where $\K$ is an algebraic $0$-Auslander reduced extriangulated category, and thus equivalent to $\Kproj$ for certain finite-dimensional algebra $\Lambda$ \cite{chen20230}, the needed assumptions hold (for more on these hypotheses, see \cite[Section 3.1]{iyama2018silting}). }. Let $\J_\U = \D/\thi_\D(\U)$ the triangle quotient of $\D$ with respect to $\U$. Let $\Z_\U = (^{\perp_\D} \U[>0]) \cap \U[<0]^{\perp_\D})$. Then the additive quotient $\Z_{\U}/[\U]$ has a natural structure of a triangulated category and we have a triangle equivalence $$\Z_\U/[\U] \xlongrightarrow{\bar{\rho}} \J_\U,$$ where $\bar{\rho}$ is induced by the functor $\Z_\U\subset \D \xrightarrow{\rho} \D/\thi_\D(\U) = \J_\U$.
\end{theorem}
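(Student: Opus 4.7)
The plan is to build the equivalence $\bar{\rho}:\Z_\U/[\U]\to\J_\U$ by first understanding the image of $\rho$ on $\Z_\U$, then producing quasi-inverses via $\U$-approximations. The key algebraic inputs are: (a) $\U$ is presilting, so $\Hom_\D(U,U'[i])=0$ for $i\neq 0$ when $U,U'\in\U$, in particular $\U\subset \Z_\U$; (b) the ``mild assumptions'' give enough functorial finiteness of $\U$ in $\D$ to construct left and right $\U$-approximations.

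First I would show that the composition $\Z_\U\hookrightarrow\D\xrightarrow{\rho}\J_\U$ annihilates the ideal $[\U]$ of morphisms factoring through an object of $\U$: indeed every object of $\U$ lies in $\thi_\D(\U)$ and so is zero in $\J_\U$, which yields the induced functor $\bar{\rho}$. Next I would prove essential surjectivity. Starting from an arbitrary $D\in\D$, take a minimal right $\U[>0]$-approximation obtained from a triangle $T\to D\to Z_1\dashrightarrow$ with $T\in\Filt(\U[>0])$ and $Z_1\in {}^{\perp_\D}\U[>0]$; then take a left $\U[<0]$-approximation $Z\to Z_1\to T'\dashrightarrow$ with $T'\in\Filt(\U[<0])$ and $Z\in\U[<0]^{\perp_\D}$. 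Checking that the truncation of the second step preserves the vanishing of $\Hom(-,\U[>0])$ (using that $\U$ is presilting) places $Z$ in $\Z_\U$, and both triangles become isomorphisms in $\J_\U$ since $T,T'\in\thi_\D(\U)$, giving $\bar{\rho}(Z)\simeq D$ in $\J_\U$.

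For full faithfulness I would use the calculus of fractions for the Verdier quotient $\D/\thi_\D(\U)$: a morphism $\bar{\rho}(Z)\to\bar{\rho}(Z')$ with $Z,Z'\in\Z_\U$ is represented by a roof $Z\xleftarrow{s}W\to Z'$ with $\mathrm{Cone}(s)\in\thi_\D(\U)$. Filtering $\mathrm{Cone}(s)$ by shifted copies of $\U$ and using $\Hom_\D(Z',\U[>0])=0$ and $\Hom_\D(\U[<0],Z')=0$ together with standard octahedral diagrams, one reduces the roof to an actual morphism $f:Z\to Z'$ in $\D$, so the induced map $\Hom_{\Z_\U/[\U]}(Z,Z')\to\Hom_{\J_\U}(\bar{\rho}(Z),\bar{\rho}(Z'))$ is surjective. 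For injectivity, I would show that a morphism $f:Z\to Z'$ that becomes zero in $\J_\U$ fits in a commutative diagram where $f$ factors as $Z\to U''\to Z'$ with $U''\in\U$, again by analysing the vanishing roof through the presilting hypothesis.

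Finally, the triangulated structure on $\Z_\U/[\U]$ is built by transporting the shift and triangles of $\J_\U$ along $\bar{\rho}$, or equivalently by defining $Z\langle 1\rangle$ as the cone of a fixed left $\U$-approximation $Z\to U_Z$; one checks this is well-defined up to $[\U]$ using that two approximations differ by a morphism factoring through $\U$, and that the cone lies in $\Z_\U$ since it preserves the required $\Hom$-vanishings. Standard triangles come from completing morphisms in $\Z_\U$ to triangles in $\D$ and then truncating cones back into $\Z_\U$ as in Step~1. I expect the hardest step to be the full faithfulness, precisely the reduction of a general roof to an honest morphism in $\D$ while staying inside $\Z_\U$; this is where the presilting property of $\U$ and the approximation hypotheses combine most delicately, and where verifying that the procedure is compatible with compositions (rather than only with individual morphisms) requires some care.
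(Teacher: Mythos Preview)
The paper does not give its own proof of this statement: it is quoted as \cite[Theorem 1.1]{iyama2018silting} and used as a black box throughout Section~\ref{sec_reduction}. So there is no proof in the present paper to compare against.

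Your outline is essentially the strategy of the original Iyama--Yang paper, with two caveats worth flagging. First, in the essential surjectivity step you speak of a single ``right $\U[>0]$-approximation'' and a single ``left $\U[<0]$-approximation'', but $\U[>0]$ and $\U[<0]$ are infinite unions and are not contravariantly/covariantly finite in general; Iyama--Yang instead take iterated approximations by $\U[i]$ one shift at a time, and the ``mild assumptions'' (their conditions (P1)--(P2)) are precisely what guarantee that for a given $D$ only finitely many steps are needed. Second, their full faithfulness argument does not go through the calculus of fractions and roofs as you propose, but rather establishes directly a bifunctorial isomorphism $\Hom_{\Z_\U/[\U]}(X,Y)\cong\Hom_{\J_\U}(X,Y)$ by applying $\Hom$ to the approximation triangles and using the presilting vanishing; the present paper extracts the positive-degree part of this as its Proposition~\ref{extension}. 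Finally, in Iyama--Yang the triangulated structure on $\Z_\U/[\U]$ is constructed intrinsically first (via the Iyama--Yoshino mechanism you describe with $Z\langle 1\rangle=\Cone(Z\to U_Z)$) and only afterwards shown to agree with the Verdier quotient, rather than being transported back from $\J_\U$; this order matters because one wants $\bar\rho$ to be a \emph{triangle} equivalence, not merely an additive equivalence that happens to induce a triangulation.
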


\begin{theorem}\cite[Theorem 3.7]{iyama2018silting}\label{bij_silting_silting_iyama} Under the assumptions of \cref{iyama_theo_reduction}, the functor $\rho : \D \rightarrow\J_\U$ induces a bijection between the sets of presilting subcategories in $\J_\U$ and presilting subcategories in $\D$ containing $\U$. Moreover, a subcategory $\mathcal{P} \subset \D$ containing $\U$ is silting if and only if $\rho(\mathcal{P})$ is silting in $\J_\U$. 
\end{theorem}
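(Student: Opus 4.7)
The plan is to leverage the triangle equivalence $\bar{\rho}: \Z_\U/[\U] \xrightarrow{\sim} \J_\U$ provided by \cref{iyama_theo_reduction}. The first key observation is that any presilting subcategory $\mathcal{P} \subset \D$ containing $\U$ must lie inside $\Z_\U$: indeed, $\Hom_{\D}(\mathcal{P},\mathcal{P}[i]) = 0$ for $i > 0$, combined with $\U \subset \mathcal{P}$, immediately yields both $\Hom_{\D}(\mathcal{P}, \U[i]) = 0$ and $\Hom_{\D}(\U, \mathcal{P}[i]) = 0$ for $i > 0$, which are precisely the orthogonality conditions defining $\Z_\U$. Hence the assignment $\mathcal{P} \mapsto \rho(\mathcal{P})$ is well-defined, and since $\rho$ factors through $\Z_\U/[\U]$, the resulting image is identified with $\mathcal{P}/[\U]$ inside $\J_\U$.

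For the forward direction of the bijection, I would use the equivalence $\bar{\rho}$ to identify $\Hom_{\J_\U}(\rho(P), \rho(Q)[i])$ with $\Hom_{\Z_\U/[\U]}(P, Q\langle i\rangle)$, where $\langle 1\rangle$ denotes the shift of the triangulated structure on $\Z_\U/[\U]$ constructed in \cref{iyama_theo_reduction} from cones along $\U$-approximations. The presilting hypothesis on $\mathcal{P}$, together with the orthogonality conditions defining $\Z_\U$, then force these Hom spaces to vanish for $i > 0$. Conversely, given a presilting subcategory $\mathcal{Q} \subset \J_\U$, I would lift it through $\bar{\rho}^{-1}$ to a subcategory $\tilde{\mathcal{Q}} \subset \Z_\U$ and set $\mathcal{P} := \add(\tilde{\mathcal{Q}} \cup \U)$. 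Checking that $\mathcal{P}$ is presilting decomposes into four mixed Hom vanishings: three of them (those involving $\U$) follow immediately from $\U$ being presilting and from $\tilde{\mathcal{Q}} \subset \Z_\U$, while the internal vanishing $\Hom_\D(\tilde{\mathcal{Q}}, \tilde{\mathcal{Q}}[i]) = 0$ for $i > 0$ must be obtained by transporting back through $\bar{\rho}$ the corresponding vanishing in $\J_\U$.

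For the silting statement, once the presilting bijection is in place, I would observe that $\rho$ is the Verdier quotient of $\D$ by $\thi_\D(\U)$. Since $\U \subset \mathcal{P}$, a standard property of Verdier localisations gives that thick subcategories of $\D$ containing $\thi_\D(\U)$ correspond bijectively to thick subcategories of $\J_\U$; applied to $\thi_\D(\mathcal{P})$, this yields $\thi_\D(\mathcal{P}) = \D$ if and only if $\thi_{\J_\U}(\rho(\mathcal{P})) = \J_\U$. Combined with the presilting bijection, this gives the silting equivalence.

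The main obstacle is the inverse direction of the presilting bijection, specifically controlling $\Hom_\D(\tilde{\mathcal{Q}}, \tilde{\mathcal{Q}}[i])$ from the vanishing of the corresponding Hom in the Verdier quotient. The subtlety is that the shift $\langle 1 \rangle$ on $\Z_\U/[\U]$ is \emph{not} inherited from $\D$ but is built inductively from cones along $\U$-approximations, so unwinding this construction and showing that no additional nonzero Hom classes appear in $\D$ requires iterated use of the orthogonality conditions defining $\Z_\U$ together with repeated application of the octahedral axiom to relate the iterated reduced shifts $\langle i \rangle$ to the ambient shifts $[i]$.
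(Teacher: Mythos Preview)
The paper does not supply its own proof of this statement: it is quoted verbatim as \cite[Theorem 3.7]{iyama2018silting} and used as a black box, so there is no in-paper argument to compare against.

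That said, your outline is correct and is essentially the strategy of Iyama--Yang. One simplification: the ``main obstacle'' you flag---passing from vanishing of $\Hom_{\J_\U}(\rho(\tilde{\mathcal{Q}}), \rho(\tilde{\mathcal{Q}})\langle i\rangle)$ back to vanishing of $\Hom_\D(\tilde{\mathcal{Q}}, \tilde{\mathcal{Q}}[i])$---is exactly the content of \cref{extension} (which is \cite[Lemma 3.4]{iyama2018silting} and is also quoted in the paper). That lemma asserts directly that $\rho$ induces a bijection $\Hom_\D(X, Y[i]) \to \Hom_{\J_\U}(\rho X, \rho Y\langle i\rangle)$ for all $i>0$ and $X, Y \in \Z_\U$. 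Once you invoke it, both directions of the presilting bijection become immediate, and there is no need to unwind the inductive construction of $\langle i\rangle$ via repeated octahedra; that work is already packaged inside the proof of \cref{extension}. Your treatment of the silting part via the correspondence between thick subcategories under Verdier localisation is fine.
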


From now on $\D = \Kb$ and $\U= \add(U)$ where $U$ is a $2$-term presilting object in $\Kb$. In this setting, the category $\J_\U$ of \cref{iyama_theo_reduction} has the following explicit description. 

\begin{proposition}\cite{neeman1992connection,borve2021two} Let $\Lambda$ be a finite-dimensional algebra and let $U$ be a basic $2$-term presilting complex in $\Kproj$ with Bongartz completion $T_U$. Then $\J_\U=  \Kb/\thi_b(\U)$ is equivalent to the category of perfect complexes $\per(C_U)$ where $C_U$ is the dg algebra $\EEn_{\D}(T_U)/{\langle e_{U}\rangle}$.
\end{proposition}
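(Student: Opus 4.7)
The plan is to realise $\J_\U$ as the perfect derived category of a derived idempotent quotient of $\EEn_\D(T_U)$, via Keller's Morita-type recognition theorem for silting objects. First, since the Bongartz completion $T_U$ (provided by \cref{Bconp_silting}) is a silting object in the algebraic, Krull--Schmidt, Hom-finite, idempotent-complete triangulated category $\Kb$, \cref{kellermorita} produces a non-positive dg algebra $A$, quasi-isomorphic to $\EEn_\D(T_U)$ (for instance via the truncation $\tau_{\le 0}$), together with a triangle equivalence
\[ F \colon \per(A) \xlongrightarrow{\simeq} \Kb \]
sending $A$ to $T_U$.

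Under $F$, the summand decomposition $T_U = U \oplus U'$ corresponds to an orthogonal idempotent decomposition $A = e_U A \oplus (1-e_U) A$ of $A$ as a right dg $A$-module, where $e_U \in A^0$ is a lift of the idempotent of $H^0(A) = \En_\D(T_U)$ cutting out the summand $U$. Consequently $F$ restricts to an equivalence $\thi_{\per(A)}(e_U A) \simeq \thi_b(\U)$, and passing to Verdier quotients yields a triangle equivalence
\[ \per(A)/\thi_{\per(A)}(e_U A) \xlongrightarrow{\simeq} \Kb/\thi_b(\U) = \J_\U. \]

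The final step, which is the main expected obstacle, is to identify the left-hand side with $\per(A/\langle e_U \rangle)$, where $\langle e_U \rangle = A e_U A$ is the two-sided dg ideal generated by $e_U$. The plan is to show that the canonical projection $A \twoheadrightarrow A/\langle e_U \rangle$ induces, via derived extension of scalars, a triangle functor $\per(A) \to \per(A/\langle e_U \rangle)$ whose kernel is precisely $\thi_{\per(A)}(e_U A)$ and which realises the target as a Verdier quotient of $\per(A)$. This is the dg analogue of Neeman's classical recollement picture \cite{neeman1992connection} and matches the statement of \cite{borve2021two} in this setting; one must verify that derived extension of scalars indeed lands in $\per(A/\langle e_U \rangle)$ (using the non-positivity of $A$) and that the resulting Verdier quotient is idempotent-complete. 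Setting $C_U := A/\langle e_U \rangle$, which is quasi-isomorphic to $\EEn_\D(T_U)/\langle e_U \rangle$, then concludes the proof.
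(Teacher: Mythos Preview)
The paper does not give its own proof of this proposition; it is stated with citations to \cite{neeman1992connection} and \cite{borve2021two} and immediately followed by a remark, so there is no in-paper argument to compare against. Your outline is the natural route and matches the strategy underlying those references: replace $\Kb$ by $\per(A)$ with $A$ the non-positive truncation of $\EEn_\D(T_U)$ via \cref{kellermorita}, identify $\thi_b(\U)$ with $\thi_{\per(A)}(e_U A)$, and then pass to the Verdier quotient.

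The only point that deserves a warning is your third step. The identification
\[
\per(A)/\thi_{\per(A)}(e_U A)\ \simeq\ \per\bigl(A/\langle e_U\rangle\bigr)
\]
is not a formality: for a general (dg) algebra the naive two-sided quotient $A/Ae_UA$ need not compute the Verdier quotient, and one typically needs either a stratifying/homological condition on the ideal $Ae_UA$ or to replace the naive quotient by a derived (Drinfeld/dg) quotient. In particular, the derived extension-of-scalars functor $-\otimes^{\mathbf L}_A A/\langle e_U\rangle$ does kill $e_UA$, but showing that its kernel on $\per(A)$ is \emph{exactly} $\thi_{\per(A)}(e_UA)$ and that the induced functor on the quotient is an equivalence requires an argument (this is precisely what \cite{borve2021two} supplies in this setting, building on the recollement picture of \cite{neeman1992connection}). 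Your parenthetical ``using the non-positivity of $A$'' gestures at the right ingredient, but you should be explicit that you are invoking those references for this step rather than claiming it follows from general nonsense.
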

\begin{remark}\label{jasoo_red_algebra}
	The choice of $C_U$ as notation is intentional. Let $(M,P)$ be the support $\tau$-rigid pair associated to $U$, then by \cite[Theorem 4.12 b)]{jasso2015reduction} \[ H^0(C_U) = \En_{\J_\U}(T_U) \simeq \En_\Lambda(H^0(T_U))/{\langle e_{H^0(U)}\rangle}, \]
	where $\En_\Lambda(H^0(T_U))/{\langle e_{H^0(U)}\rangle} = C_{(M,P)}$  is the algebra associated to the $\tau$-tilting reduction of $\Mod \Lambda$ by $(M,P)$ as described by G.~Jasso in \cite{jasso2015reduction}.
\end{remark}

The following result is a weaker version of \cite[Lemma 3.4]{iyama2018silting} which will be essential four our results.  

\begin{proposition}\cite[Lemma 3.4]{iyama2018silting}\label{extension} The functor $\Kb \xrightarrow{\rho} \J_\U$ induces a bijective map 
	\[\Hom_\D(X, Y[i]) \rightarrow \Hom_{\J_\U}(X, Y[i])\]
	for every $i > 0$ and $X, Y \in \Z_\U$.
\end{proposition}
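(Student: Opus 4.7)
The plan is to use the calculus of right fractions for the Verdier localization $\J_\U = \D/\thi_b(\U)$. A morphism $\varphi \in \Hom_{\J_\U}(X, Y[i])$ is represented by a roof $X \xleftarrow{s} X' \xrightarrow{f} Y[i]$ with $\Cone(s) \in \thi_b(\U)$, and $\rho$ sends $g \in \Hom_\D(X, Y[i])$ to the class of $(\mathrm{id}_X, g)$. Applying $\Hom_\D(-,Y[i])$ to the triangle $V[-1] \to X' \xrightarrow{s} X \to V$, with $V := \Cone(s)$, yields the exact sequence
\[
\Hom_\D(V, Y[i]) \to \Hom_\D(X, Y[i]) \xrightarrow{s^*} \Hom_\D(X', Y[i]) \xrightarrow{\delta} \Hom_\D(V[-1], Y[i]).
\]
Surjectivity of the induced map amounts to producing, for each equivalence class of roofs, a representative with $\delta(f) = 0$. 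Injectivity reduces dually to showing that if $g \in \Hom_\D(X, Y[i])$ satisfies $g \circ t = 0$ for some $t : X'' \to X$ with $W := \Cone(t) \in \thi_b(\U)$, then the resulting factorization $\bar{g} : W \to Y[i]$ must itself vanish, after a suitable refinement of $t$.

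The key reduction is to trim the cones $V$ and $W$ so that their shifts are non-positive, \emph{i.e.}, they lie in $\U[-m] * \cdots * \U[0]$ rather than in the generic $\U[-n] * \cdots * \U[n]$ supplied by \cref{thickdes}(ii). For such cones the Hom-vanishings are automatic: if $V \in \U[k]$ with $k \leq 0$, then $\Hom_\D(V[-1], Y[i]) = \Hom_\D(U, Y[i-k+1]) = 0$ since $i-k+1 > 0$ by $Y \in \Z_\U$, and a long exact sequence gives the analogous vanishing for iterated extensions as well as for $\Hom_\D(W, Y[i])$. To trim $W \in \U[-n] * \cdots * \U[n]$ with $n \geq 1$, filter $W$ as $W_1 \to W \to W_2$ with $W_2 \in \U[n]$ the extremal positive shift. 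Because $X \in {}^{\perp}(\U[>0])$, the composition $X \to W \to W_2$ vanishes, so $X \to W$ lifts to a morphism $X \to W_1$; an octahedral argument then produces a refined $t' : X''' \to X$ with $\Cone(t') = W_1$ (strictly smaller positive range) factoring through $t$, so the relation $g \circ t' = 0$ is automatic. Iterating gives a factorization of $g$ through a cone with only non-positive shifts, and the final Hom-vanishing forces $g = 0$. The parallel argument for surjectivity trims $\Cone(s)$ using the same orthogonality and concludes via the vanishing of $\delta$.

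The main obstacle is the octahedral bookkeeping: verifying at each step that the refined morphism preserves the necessary data (equivalence class in $\J_\U$ for surjectivity, the relation $g \circ t' = 0$ for injectivity) and that the shift range of the cone strictly decreases. The orthogonalities needed are $\Hom_\D(X, \U[>0]) = 0$ on the source side, which powers the trimming, and $\Hom_\D(U, Y[>0]) = 0$ on the target side (coming from $Y \in (\U[<0])^{\perp}$), which powers the final vanishing; negative shifts in the cone are harmless because they contribute Hom groups of the form $\Hom_\D(U, Y[j])$ with $j$ strictly positive. The symmetry between injectivity and surjectivity is then reflected in the symmetry of the two orthogonality conditions defining $\Z_\U$.
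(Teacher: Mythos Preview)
The paper does not supply its own proof of this statement; it is quoted directly from \cite[Lemma~3.4]{iyama2018silting}. Your argument via the calculus of right fractions for the Verdier quotient is correct and is essentially the standard one: represent morphisms by roofs, use \cref{thickdes}(ii) to write the cone as an iterated extension of shifts of $\U$, and then trim. The key observation, which you identify correctly, is that the orthogonality $X \in {}^{\perp}(\U[>0])$ lets you peel the positive shifts off the cone via the octahedral axiom, while $Y \in (\U[<0])^{\perp}$, i.e.\ $\Hom_\D(U, Y[>0]) = 0$, kills the remaining Hom groups once only non-positive shifts are left. One point worth writing out explicitly: in the injectivity step you claim the refined $t'$ factors through $t$. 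This is what the octahedron on $X \xrightarrow{p'} W_1 \to W$ gives---the induced map on fibers is $X''' \to X''$ with $t' = t \circ (X''' \to X'')$---so $g \circ t' = 0$ follows immediately; but the direction of this induced map is easy to get backwards, so it deserves a line. The dual bookkeeping for surjectivity (that the refined roof $(s', f \circ r)$ with $s' = s \circ r$ represents the same class in $\J_\U$) is equally routine. With those checks in place the sketch is complete.
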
 

We denote by $\langle 1 \rangle$ the shift functor in $\J_\U$ induced by $[1]$. We note that any triangle $X \xrightarrow{\bar{f}} Y \xrightarrow{\bar{g}} Z \dashrightarrow X\langle 1 \rangle$ in $\J_\U$ is the image of a triangle in $\Z_{\U}/[\U]$, which is in turn isomorphic to triangles obtained from a commutative diagram of the form
\begin{center}
	\begin{tikzcd}
	X \arrow[r, "f"] \arrow[d, equal] & Y \arrow[r, "g"] \arrow[d] & Z \arrow[r, dashed] \arrow[d] &X[1] \arrow[d, equal] \\
	X \arrow[r, "p_U"] & U_X \arrow[r] & X\langle 1 \rangle \arrow[r, dashed] &X[1] \, ,
	\end{tikzcd}
\end{center}
where $p_U$ is a minimal left $\U$-approximation of $X$. This fact says in particular that if $\C \subset \Z_{\U}$ is thick in $\Z_\U$, this remains true for $\C/[U]$ in $\Z_\U/[U]$ and thus for $\rho(\C) \subset \rho(\Z_\U)$.

\smallskip
The following lemma was originally shown by O.~Iyama and D.~Yang as a step towards proving \cref{iyama_theo_reduction}. In their context, they establish that for any $X \in \D$ there exists $Y \in \Z_\U$ such that $X \simeq Y$ in $\J_\U$. In the following lemma, we adapt their arguments to show that if $\D = \Kb$, and $X \in \Kproj$, then $Y$ can be chosen from $\Z_\U \cap \Kproj$.

\begin{lemma}\cite[Lemma 3.3]{iyama2018silting}
	Let $\Lambda$ be a finite-dimensional algebra and let $U$ be a basic $2$-term presilting object in $\Kproj$. For any $X \in \Kproj$ there exists $Y \in \Z_\U^{[-1,0]} = \Z_\U \cap \K^{[-1,0]}(\proj \Lambda)$ satisfying $X \simeq Y$ in $\J_\U$. 
\end{lemma}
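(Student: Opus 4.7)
The plan is to kill successively the two obstructions $\Hom_{\Kb}(U, X[1])$ and $\Hom_{\Kb}(X, U[1])$ to $X$ belonging to $\Z_\U$ by realizing two suitable extensions as conflations in $\Kproj$. The key observation is that since $U$ and $X$ are $2$-term, one automatically has $\Hom_{\Kb}(U, Y[i]) = 0 = \Hom_{\Kb}(Y, U[i])$ for every $Y \in \Kproj$ and every $i \geq 2$, so only $i = 1$ requires attention; moreover $\Kproj = \add(\Lambda) \ast \add(\Lambda)[1]$ is extension-closed in $\Kb$, which will guarantee that the middle terms of our conflations stay $2$-term.

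\smallskip

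\noindent\emph{Step 1.} Since $\Lambda$ is finite-dimensional, $\Hom_{\Kb}(U, X[1])$ is a finite-dimensional $\Bbbk$-space, hence finitely generated as a right $\En_{\Kb}(U)$-module. Choose $\delta_1 \colon U^n \to X[1]$ whose components form such a generating set; regarded as an element of $\EE_{\Kproj}(U^n, X)$, it is realized by a conflation $\scon{X}{}{Y_1}{}{U^n}$ in $\Kproj$. Applying $\Hom_{\Kb}(U,-)$ to the underlying triangle $X \to Y_1 \to U^n \xrightarrow{\delta_1} X[1]$, the surjectivity of $\Hom_{\Kb}(U, U^n) \to \Hom_{\Kb}(U, X[1])$ induced by $\delta_1$, together with $\Hom_{\Kb}(U, U^n[1]) = 0$ (presilting), forces $\Hom_{\Kb}(U, Y_1[1]) = 0$.

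\smallskip

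\noindent\emph{Step 2.} Applying $\Hom_{\Kb}(-, U[1])$ to the triangle from Step~1 and using $\Hom_{\Kb}(U^n, U[j]) = 0$ for $j = 1, 2$, one obtains an isomorphism $\Hom_{\Kb}(Y_1, U[1]) \simeq \Hom_{\Kb}(X, U[1])$, still finite-dimensional. Dually, pick $\delta_2 \colon Y_1 \to U^m[1]$ whose components generate $\Hom_{\Kb}(Y_1, U[1])$ as a left $\En_{\Kb}(U)$-module, and let $\scon{U^m}{}{Y_2}{}{Y_1}$ be the corresponding conflation in $\Kproj$. The dual argument gives $\Hom_{\Kb}(Y_2, U[1]) = 0$, while applying $\Hom_{\Kb}(U, -)$ to this second triangle and invoking Step~1 yields $\Hom_{\Kb}(U, Y_2[1]) \simeq \Hom_{\Kb}(U, Y_1[1]) = 0$. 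Combined with the initial observation, $Y_2 \in \Z_\U^{[-1, 0]}$. Finally, since both triangles above contain a term in $\add(U) \subset \thi_b(U)$, they become split in $\J_\U = \Kb / \thi_b(U)$, giving $X \simeq Y_1 \simeq Y_2$ there, so $Y := Y_2$ works.

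\smallskip

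This is essentially the Iyama--Yang construction of a representative in $\Z_\U$, adapted to remain inside $\Kproj$. The potential obstacle, and the reason their argument does not apply verbatim, is that the $\U$-approximations used in $\Kb$ may produce middle terms outside $\Kproj$; this is bypassed by replacing approximations with extensions realized via conflations in $\Kproj$ and exploiting its extension-closure.
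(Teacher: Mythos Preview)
Your proof is correct. Both approximation steps land in $\Kproj$ by extension-closure, the long exact sequences kill the relevant $\Hom$-groups as you claim, and the resulting triangles become isomorphisms in $\J_\U$ since their third terms lie in $\add(U)$.

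The route, however, differs from the paper's. The paper argues globally via Bongartz theory: it takes the Bongartz completion $T_U$ and co-completion $T_U^c$ of $U$, observes that $T_U, T_U^c \in \Z_\U^{[-1,0]}$ and hence $T_U * T_U^c \subset \Z_\U^{[-1,0]}$, and then uses that $\rho(T_U) \simeq \rho(\Lambda)$ and $\rho(T_U^c) \simeq \rho(\Lambda[1])$ together with the triangle equivalence $\bar{\rho}$ and \cref{extension} to conclude that $(T_U * T_U^c)/[\U]$ already covers $\rho(\Lambda * \Lambda[1]) = \rho(\Kproj)$. Your argument is instead local and constructive: for each $X$ you build $Y$ by two explicit conflations, without invoking Bongartz completions or the equivalence $\bar{\rho}$. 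Your approach is more self-contained and in fact closer in spirit to what the paper itself does later in the proof of \cref{rhothick} (there with left $\U[1]$- and right $\U[-1]$-approximations, which amounts to the same mechanism in a different order). The paper's approach, on the other hand, makes visible the concrete subcategory $T_U * T_U^c$ of representatives, which ties the lemma to the silting theory already in play. One small remark: your closing comment slightly overstates the obstacle, since suitably chosen $\U[\pm 1]$-approximations do keep the middle terms $2$-term (as the paper exploits in \cref{rhothick}); but your formulation via generators of the $\Hom$-spaces is an entirely equivalent way to arrange this.
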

\begin{proof}
	Let $\U= \add(U)$, with $U$ basic and presilting. First note that $$\Z_\U= \{ X \in \D \ | \ \Hom_\D(X, U[i]) = 0 = \Hom_\D(U[-i], X)  \ \forall i > 0 \}.$$ Since $\Hom_\D(Y, Y[i]) = \Hom_\D(Y[-i], Y) = 0 $ for all $i \geq 2$ and all $Y \in \Kproj$, then $$\Z_\U^{[-1,0]} = \{X \in \Kproj \ | \ \EE(X, U) = 0 = \EE(U, X)\}.$$ 
	
	Recall that $\K^{[-1,0]}(\proj \Lambda) = \Lambda * \Lambda[1]$ and that $\Lambda$ and $T_U$ are isomorphic in $\J_\U$ where $T_U$ is the Bongartz completion of $U$ into a silting complex in $\Kproj$, which by definition lies in $\Z_\U^{[-1,0]}$. We will show that we can find $H \in \Z_\U^{[-1,0]}$ such that $T_U[1] \simeq H$ in $\J_\U$. Consider the conflation $\Lambda \rightarrowtail U' \twoheadrightarrow V \dashrightarrow \Lambda[1]$ where the first morphism is a minimal right $\U$-approximation of $\Lambda$, then by definition $V \in \Kproj$ and $V \simeq \Lambda[1] \simeq T_U[1]$ in $\J_\U$. In fact, $V \simeq T_U^c$ inside $\J_\U$, where $T^c_U$ is the Bongartz co-completion of $U$, that is, the basic silting complex satisfying that $\add(T^c_U) = \add(V \oplus U)$. Since $T^c_U \in \Kproj$ and $\EE(T^c_U, U) = 0 = \EE(U, T^c_U)$, we have that $T^c_U \in \Z_\U^{[-1,0]}$ and that $T_U * T^c_U \subset \Z_\U^{[-1,0]}$. Since the functor $\bar{\rho}$ is a triangle equivalence, by \cref{extension} we have that
	$$ \Z_\U^{[-1,0]}/[\U] \supseteq (T_U * T_U^c)/ [\U]  \xrightarrow{\bar{\rho}} \rho(\Lambda)*_\J \rho(\Lambda)\langle 1 \rangle \simeq \rho(\Lambda * \Lambda[1]) = \subseteq \J_\U.$$
	In particular, for each $X \in \Kproj$ there exists $X' \in T_U * T^c_U$ such that $\rho(X) \simeq \rho(X')$. 
\end{proof}

\begin{remark}
	We note that the full extension-closed subcategory $\Z_\U^{[-1,0]}$ is precisely the subcategory considered by M. Gorsky, H. Nakaoka and Y. Palu in \cite[Definition 2.7]{gorsky2023hereditary} to define the reduction of an extriangulated category with respect to a presilting object.
\end{remark}

\begin{lemma}\label{rhothick} Let $U \in \Kproj$ be a $2$-term presilting complex and $\U$ its additive closure. Consider $\Z_\U$ and $\rho : \D \rightarrow \D/\thi_b(\U)$ as in \cref{iyama_theo_reduction}. Let $\HH \subset \Kproj$ be a thick subcategory such that $U \in \HH$. Then 
	$$\frac{\HH \cap \Z_\U}{[U]} \simeq \rho(\HH).$$
In particular, $\rho(\HH)$ is thick inside the extriangulated category $$ \rho(\Kproj) \simeq \pero(C_U).$$
\end{lemma}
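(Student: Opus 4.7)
The plan is to show that $\rho$ induces an equivalence $\Phi : (\HH \cap \Z_\U)/[U] \to \rho(\HH)$; the thickness of $\rho(\HH)$ in $\rho(\Kproj)$ will then follow as a formal consequence. Since $\HH \subset \Kproj$ we have $\HH \cap \Z_\U = \HH \cap \Z_\U^{[-1,0]}$, and the functor $\Phi$ is well defined because $\rho(U) = 0$. It is fully faithful as the restriction of the triangle equivalence $\bar{\rho} : \Z_\U/[\U] \to \J_\U$ of \cref{iyama_theo_reduction} to the full subcategory $(\HH \cap \Z_\U)/[U]$ of the source and to the full subcategory $\rho(\HH)$ of the target.

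The substantive step will be essential surjectivity: given $X \in \HH$, I will build $Z \in \HH \cap \Z_\U^{[-1,0]}$ with $\rho(Z) \simeq \rho(X)$ by a two-step procedure that kills the two extension groups cutting out $\Z_\U^{[-1,0]}$ while staying inside $\HH$. For the first step, I pick a $\Bbbk$-basis $g_1, \ldots, g_m$ of the finite-dimensional space $\EE(X, U) = \Hom_{\Kb}(X, U[1])$ and form the conflation $U^m \rightarrowtail Y \twoheadrightarrow X$ in $\Kproj$ realizing the element $(g_i) \in \Hom_{\Kb}(X, U^m[1])$. Applying $\Hom_{\Kb}(-, U[1])$ to the associated triangle, the induced map $\Hom_{\Kb}(U^m, U) \to \Hom_{\Kb}(X, U[1])$ is surjective by construction, forcing $\EE(Y, U) = 0$; and extension-closure of $\HH$ together with $X, U^m \in \HH$ gives $Y \in \HH$. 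The second step is dual: pick a basis $h_1, \ldots, h_k$ of $\EE(U, Y)$ and form the conflation $Y \rightarrowtail Z \twoheadrightarrow U^k$ realizing $(h_j) \in \Hom_{\Kb}(U^k, Y[1])$. The analogous computation gives $\EE(U, Z) = 0$, and applying $\Hom_{\Kb}(-, U[1])$ to this second triangle while using $\EE(Y, U) = 0$ from the first step and $\Hom_{\Kb}(U^k, U[1]) = 0$ from presiltingness of $U$ yields $\EE(Z, U) = 0$. Hence $Z \in \Z_\U^{[-1,0]}$, and $Z \in \HH$ by extension-closure. Finally, since $U^m$ and $U^k$ become zero in $\J_\U$, applying $\rho$ to both conflations produces isomorphisms $\rho(X) \simeq \rho(Y) \simeq \rho(Z)$.

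For the thickness claim, via the equivalence $\rho(\Kproj) \simeq \pero(C_U)$ it suffices to verify that $\rho(\HH)$ is closed under direct summands, extensions, cones, and cocones in $\rho(\Kproj)$. Closure under direct summands reduces, through the equivalence $\Phi$ combined with Krull--Schmidt, to the corresponding property of $\HH$ and $\Z_\U^{[-1,0]}$ in $\Kproj$. For the other closures, any conflation in $\rho(\Kproj)$ with outer terms in $\rho(\HH)$ lifts through essential surjectivity to a conflation in $\Kproj$ with outer terms in $\HH \cap \Z_\U^{[-1,0]}$; thickness of $\HH$ then forces the remaining term into $\HH$, and since $\Z_\U^{[-1,0]}$ is extension-closed in $\Kproj$ that term also lies in $\Z_\U^{[-1,0]}$, so its image belongs to $\rho(\HH)$. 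I expect the main obstacle to be the two-step vanishing construction in essential surjectivity, which requires the careful use of $U$ being $2$-term presilting together with membership of $U$ in $\HH$; the thickness claim will then follow formally.
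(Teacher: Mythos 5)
Your proposal is correct and follows essentially the same route as the paper: a two-step reduction producing first $Y$ with $\EE(Y,U)=0$ and then $Z$ with $\EE(U,Z)=0=\EE(Z,U)$, using presiltingness of $U$ and extension-closure of $\HH$ (with $U \in \HH$) to keep everything inside $\HH \cap \Z_\U^{[-1,0]}$, after which $\rho$ identifies $X$, $Y$ and $Z$. The only cosmetic difference is that you realize the approximations as universal extensions built from a basis of the relevant $\EE$-group, whereas the paper uses minimal left $\U[1]$- and right $\U[-1]$-approximations; these give the same surjectivity and hence the same vanishing.
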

\begin{proof}
	The following argument follows closely the those in \cite[Proposition 3.2, Lemma 3.3]{iyama2018silting}. We are going to show that for every $X \in \HH$ there exists and object $Z \in \HH \cap \Z_\U \subset \Z_\U^{[-1,0]}$ such that $\rho(X) \simeq \rho(Z)$. Let $X \in \HH$ and take $X\xrightarrow{f} U'[1]$ a minimal left $\U[1]$-approximation of $X$ and let $Y = \CCone(f)$. We then can construct a conflation 
	\[ U' \rightarrowtail Y \twoheadrightarrow X \overset{f}{\dashrightarrow}, \]
	which implies that $Y \in \Kproj$ and $\Hom(Y, U[n]) = 0$ for all $n \geq 2$. Moreover, applying the functor $\Hom_\D(-,U[1])$ to the previous triangle we obtain an exact sequence 
	\[\Hom_\D(U'[1],U[1]) \xlongrightarrow{\Hom_\D(-,f)} \Hom_\D(X,U[1]) \rightarrow \Hom_\D(Y, U[1]) \rightarrow \Hom_\D(U', U[1]). \]
	Given that $\Hom_\D(U', U[1]) = 0$, since $U$ is presilting and since $\Hom_\D(-,f)$ is surjective because $f$ is a left $\U[1]$-approximation, we deduce that $\Hom_\D(Y, U[1]) = 0$, 
	Now, consider $U''[-1] \xrightarrow{g} Y$ a minimal right $\U[-1]$-approximation and let $Z = \Cone(g)$, then there exists a conflation
	\[ Y \rightarrowtail Z \twoheadrightarrow U'' \overset{g[1]}{\dashrightarrow}. \]
	In particular, $Z \in \Kproj$ and $Z \in U[<-1]^{\perp}$. As before, by applying the functor $\Hom_\D(U[-1],-)$, and using that $g$ is a right $\U[-1]$-approximation and $U$ silting, we can deduce that $ Z \in U[-1]^\perp$. But $Z$ is an extension between two objects in $^\perp U[1]$, which implies that $Z \in ^\perp \U[>0] \cap \U[<0]^\perp = \Z_\U$. Moreover, under the assumption that $U \in \HH$, both previous conflations give that both $Y$ and $Z$ lie in $\HH$. We get that $$\rho(X) \simeq \rho(Y) \simeq \rho(Z),$$ with $Z \in \HH \cap \Z_\U$. 
	Since $\HH \cap \Z_\U$ is thick in $\Z_\U^{[-1,0]}$ by \cref{interthick}, we get that \[\rho(\HH) \simeq \bar{\rho}(\HH \cap \Z_\U)  \subset \bar{\rho}(\Z_\U^{[-1,0]}) \simeq \rho(\Kproj)\] is thick in $\rho(\Kproj) \simeq \pero(C_U)$. 
\end{proof}

\section{Thick subcategories and cotorsion pairs of $g$-finite algebras}\label{sec_thickg}

In this section we assume that $\Lambda$ is a $g$-finite algebra and study the maps between cotorsion pairs, thick subcategories and presilting objects in $\Kproj$ introduced in \cite{garcia2023thick}. 

\subsection{Bijection between cotorsion and torsion classes}
\begin{definition}\cite[Definition 1.7]{pauksztello2023cotorsion}\label{def_cotorsion}
	Let $\K$ be an extriangulated category with extension bifunctor $\EE$. We say that a pair of subcategories $(\X, \Y)$ is a \textit{cotorsion pair} if they are both full and additive and they satisfy 
	\begin{enumerate}
		\item $\Y = \X^{\perp_1} = \{Y \in \K \ | \ \EE(X,Y) = 0 \ \forall \ X\in \X\}$.
		\item  $\X = {}^{\perp_1}\Y = \{X \in \K \ | \ \EE(X,Y) = 0 \ \forall \ Y\in \Y\}$. 
	\end{enumerate}
We say that $(\X, \Y)$ is \textit{complete} \cite[Definition 4.1]{nakaoka2019extriangulated}, if additionally $$\K= \Cone(\Y,\X) = \CCone(\Y, \X).$$ When $\K = \Kproj$, we will denote by $\ctor \Lambda$ the set of all cotorsion pairs in $\Kproj$ and by $\cctor \Lambda$ the subset of cotorsion pairs of $\Kproj$ which are complete. 
\end{definition}

In \cite{pauksztello2023cotorsion}, D.~Pauksztello and A.~Zvonareva showed that the functor $H^0 : \Kproj \rightarrow \Mod \Lambda$ induces a map between the set of cotorsion pairs in $\Kproj$ and that of \textit{torsion classes} in $\Mod \Lambda$, which we denote by $\tor \Lambda$. 

\begin{theorem}\cite[Proposition 3.6 and Theorem 3.7]{pauksztello2023cotorsion}\label{PZoriginal}
	Let $\Lambda$ be a finite-dimensional $\Bbbk$-algebra. Then the functor $H^0 : \Kproj \rightarrow \Mod \Lambda$ induces a well-defined map
	\begin{align*}
		\psi : \ctor \Lambda &\rightarrow \tor \Lambda \\
		(\X, \Y) &\mapsto H^0(\Y).
	\end{align*}
	Moreover, $\psi$ induces a bijection between the sets of complete cotorsion pairs in $\Kproj$ and functorially finite torsion classes in $\Mod \Lambda$.
\end{theorem}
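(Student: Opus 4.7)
The plan is to split the theorem into two parts: well-definedness of the map $\psi$ on all cotorsion pairs, and the restricted bijection between complete cotorsion pairs and functorially finite torsion classes.

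For well-definedness, given $(\X, \Y) \in \ctor \Lambda$, I would show that $H^0(\Y)$ is closed under extensions and quotients in $\Mod \Lambda$. The key tools are that $H^0 : \Kproj \to \Mod \Lambda$ is essentially surjective, via minimal projective presentations, and that every short exact sequence of modules lifts to a conflation of $2$-term complexes. For extension closure, I would lift a short exact sequence $0 \to H^0(Y_1) \to M \to H^0(Y_2) \to 0$ with $Y_1, Y_2 \in \Y$ to a conflation $\scon{Y_1}{}{Y}{}{Y_2}$ in $\Kproj$; applying the long exact sequence in $\EE(X, -)$ for $X \in \X$ forces $\EE(X,Y)=0$, so $Y \in \Y$ and $H^0(Y) = M$. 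For quotient closure, given a surjection $H^0(Y) \twoheadrightarrow M$ with $Y \in \Y$, I would lift $M$ to some complex $Y_M \in \Kproj$ and build a deflation $\defl{Y}{}{Y_M}$; arguing that the cocone lies in $\X$ (using that kernels of $H^0$ are controlled by complexes concentrated in degree $-1$, which belong to $\X$ whenever $\X$ contains the projective generators) and then a diagram chase in the $\EE$ long exact sequence would force $Y_M \in \Y$.

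For the restricted bijection in the complete case, my strategy is to compose three established bijections:
\begin{align*}
    \cctor \Lambda \xrightarrow{\sim} \tsilt \Lambda \xrightarrow{\sim} \tautilt \Lambda \xrightarrow{\sim} \ftor \Lambda.
\end{align*}
The first correspondence is the matching of complete cotorsion pairs in $\Kproj$ with basic $2$-term silting objects from \cite{garcia2023thick}, which to $(\X, \Y)$ associates a silting $U$ whose additive closure is controlled by the boundary of $\X \cap \Y$. The second and third are the Adachi--Iyama--Reiten correspondences, $U \mapsto H^0(U)$ and $M \mapsto \Fac(M)$. I would verify that the composite equals $\psi$ by checking $H^0(\Y) = \Fac(H^0(U))$: the inclusion $\Fac(H^0(U)) \subseteq H^0(\Y)$ uses that any quotient of $H^0(U)^n$ lifts to a deflation from $U^n$ in $\Kproj$ whose target lies in $\Y$; the reverse inclusion follows from the fact that any $Z \in \Y$ admits a left $\add(U)$-approximation inside $\Y$.

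The main obstacle will be the quotient-closure part of well-definedness. Extension closure is a clean application of the long exact sequence in $\EE$, but arranging that a module-level surjection lifts to a deflation in $\Kproj$ whose cocone lies in $\X$ requires a careful choice of lift and essential use of the $0$-Auslander structure. Once well-definedness is established, the bijection in the complete case reduces to tracking the three known correspondences above and checking that their composition matches $\psi$ on the nose.
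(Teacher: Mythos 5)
The paper does not prove this statement: it is imported verbatim from Pauksztello--Zvonareva \cite{pauksztello2023cotorsion} (their Proposition~3.6 and Theorem~3.7) and used as a black box, so there is no in-paper argument to compare against. Judged on its own, your reconstruction is sound and follows what is essentially the standard route: well-definedness via lifting short exact sequences and surjections of modules to conflations in $\Kproj$, and the complete/functorially-finite bijection by composing $\cctor\Lambda \simeq \tsilt\Lambda \simeq \tautilt\Lambda \simeq \ftor\Lambda$ and checking $H^0(\Y)=\Fac(H^0(U))$. Two details deserve correction. First, in the quotient-closure step the cocone of the deflation $\defl{Y}{}{Y_M}$ you construct (by adjoining a projective cover of $\ker(H^0(Y)\twoheadrightarrow M)$ in degree $-1$) is a projective stalk complex concentrated in degree $0$, i.e.\ an object of $\add(\Lambda)$ --- not a complex concentrated in degree $-1$; objects of $\add(\Lambda[1])$ are the injectives and lie in $\Y$, not in $\X$. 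What actually closes the argument is that $\add(\Lambda)\subseteq\X$ for \emph{every} cotorsion pair together with $\Hom_{\Kb}(X,\Lambda[2])=0$ for degree reasons (equivalently, the hereditary/$0$-Auslander property of $\Kproj$, so the $\EE$-long exact sequence terminates and $\EE(X,Y)\to\EE(X,Y_M)$ is surjective). Second, the lift of a module extension to a conflation with the prescribed end terms $Y_1,Y_2$ is exactly the horseshoe lemma applied to the (not necessarily minimal) presentations $Y_1,Y_2$; it yields a degreewise split exact sequence of complexes, hence a conflation, and then both outer $\EE$-terms vanish because $\Y$ is closed under summands. These are fixable imprecisions rather than gaps.
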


The following is an extension of \cref{PZoriginal}. 

\begin{theorem}\label{cotortorbij}
	Let $\Lambda$ be a finite-dimensional $\Bbbk$-algebra. Then the functor $H^0: \Kproj \rightarrow \Lambda$ induces a bijection 
	\begin{align*}
		\psi  : \ctor \Lambda &\rightarrow \tor \Lambda \\
		(\X, \Y) &\mapsto H^0(\Y) .
	\end{align*}
\end{theorem}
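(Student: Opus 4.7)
I will construct an explicit two-sided inverse $\phi:\tor\Lambda\to\ctor\Lambda$. For $\T\in\tor\Lambda$, set $\Y_\T:=\{Y\in\Kproj:H^0(Y)\in\T\}$ and $\X_\T:={}^{\perp_1}\Y_\T$, and put $\phi(\T):=(\X_\T,\Y_\T)$. Closure of $\Y_\T$ under direct summands is immediate from additivity of $H^0$, and closure under extensions follows from the long exact sequence: every conflation $\scon{Y_1}{}{Y_2}{}{Y_3}$ in $\Kproj$ yields an exact sequence $H^0(Y_1)\to H^0(Y_2)\to H^0(Y_3)\to 0$ (the $H^1$ term vanishes since $\Kproj$ consists of $2$-term complexes), so $H^0(Y_2)$ lies in $\T$ by the closure of $\T$ under extensions and quotients. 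Moreover $\Lambda[1]\in\Y_\T$ and $\Lambda\in\X_\T$ hold automatically, since $\Hom_{\Kb}(\Lambda,-[1])$ and $\Hom_{\Kb}(-,\Lambda[2])$ vanish on $\Kproj$.

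The central step is to show that $(\X_\T,\Y_\T)$ is a cotorsion pair, which reduces to the nontrivial inclusion $(\X_\T)^{\perp_1}\subseteq\Y_\T$. I would approach this by approximation: since $\T=\bigcup_{M\in\T}\Fac(M)$ is a union of functorially finite torsion subclasses, \cref{PZoriginal} provides a family of complete cotorsion pairs $(\X_M,\Y_M)$ with $H^0(\Y_M)=\Fac(M)$, whence $\Y_M\subseteq\Y_\T$ and thus $\X_\T\subseteq\X_M$ for every $M\in\T$. Given $Z\in(\X_\T)^{\perp_1}$ with $H^0(Z)=N$, assume for contradiction $N\notin\T$. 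Then $N$ has a nonzero quotient $N'$ in the torsion-free class $\F:=\T^{\perp_0}$; let $X_{N'}$ denote its minimal projective presentation in $\Kproj$. Using an Auslander--Reiten-type identification of $\EE(X_{N'},-)$ in terms of $\Hom_\Lambda(-,\tau N')$ and the fact that $\Hom_\Lambda(T,\tau N')=0$ for all $T\in\T$ (a consequence of the torsion-freeness of $N'$), one deduces $X_{N'}\in\bigcap_M\X_M$, and promotes this to $X_{N'}\in\X_\T$. Since $\EE(X_{N'},Z)\neq 0$ because of the surjection $N\twoheadrightarrow N'$ on $H^0$, this contradicts $Z\in(\X_\T)^{\perp_1}$.

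The remaining identities are then straightforward: $\psi\circ\phi(\T)=\T$ follows because every $M\in\T$ is the $H^0$ of its minimal projective presentation, which sits in $\Y_\T$; and $\phi\circ\psi(\X,\Y)=(\X,\Y)$ follows by setting $\T:=H^0(\Y)$, observing $\Y\subseteq\Y_\T$ tautologically, and then combining $\Y=\X^{\perp_1}$ with the cotorsion pair property of $\phi(\T)=(\X_\T,\Y_\T)$ just established to force $\X=\X_\T$ and hence $\Y=\Y_\T$. The principal obstacle is the Auslander--Reiten-type calculation in the contradiction argument, and more specifically the promotion step from $X_{N'}\in\bigcap_M\X_M$ to $X_{N'}\in\X_\T={}^{\perp_1}\Y_\T$; if this proves delicate, an alternative is to show directly that $\Y_\T=\bigcup_M\Y_M$ is already extension-closed, which together with $\X_\T=\bigcap_M\X_M$ would bypass the $\tau$-computation via a filtered-limit argument inside the $0$-Auslander extriangulated structure of $\Kproj$.
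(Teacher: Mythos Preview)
Your setup coincides with the paper's: the candidate inverse is $\T\mapsto({}^{\perp_1}\Y_\T,\Y_\T)$ with $\Y_\T=\{Y:H^0(Y)\in\T\}$, and everything reduces to the inclusion $({}^{\perp_1}\Y_\T)^{\perp_1}\subseteq\Y_\T$; the Auslander--Reiten-type formula $\EE(X_M,Y)\simeq D\Hom_\Lambda(H^0(Y),\tau M)$ is also the paper's central tool. However, your contradiction argument breaks at two points. First, torsion-free classes are \emph{not} closed under $\tau$, so ``$N'\in\F$ implies $\Hom_\Lambda(T,\tau N')=0$ for all $T\in\T$'' is false. For a concrete failure take the self-injective Nakayama algebra on two vertices with $\rad^2=0$ and $\T=\add(S_1)$: then $\F=\add(S_2,P_1)$, yet $\tau S_2=S_1\notin\F$. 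The paper's explicit computation shows that $X_M\in{}^{\perp_1}\Y_\T$ if and only if $\tau M\in\F$, so the correct ``test object'' attached to a non-injective $L\in\F$ is $X_{\tau^{-1}L}$, not $X_L$. Second, even granting $X_{N'}\in\X_\T$, the same formula gives $\EE(X_{N'},Z)\simeq D\Hom_\Lambda(N,\tau N')$, and the surjection $N\twoheadrightarrow N'$ does not produce a nonzero map $N\to\tau N'$.

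The paper sidesteps both issues by computing $({}^{\perp_1}\Y_\T)^{\perp_1}$ explicitly. An ingredient that is invisible in your sketch, and which any repair would need, is the separate treatment of \emph{injective} modules in $\F$: these are detected not by any $X_M$ but by the shifted projectives $P[1]$ with $P\in\add(\Lambda)\cap{}^\perp\T$, which also lie in ${}^{\perp_1}\Y_\T$; the paper uses the equivalence $I_i\in\F\Leftrightarrow P_i\in{}^\perp\T$ to close the argument. Your filtered-union alternative does not help either: while $\X_\T=\bigcap_{M\in\T}\X_M$ and $\Y_\T=\bigcup_{M\in\T}\Y_M$ are both correct, establishing $(\bigcap_M\X_M)^{\perp_1}\subseteq\bigcup_M\Y_M$ is exactly the statement in question, and no general ``filtered-limit'' principle in a $0$-Auslander category supplies it.
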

	\begin{proof}
	By \cref{PZoriginal}, the only thing left to proof is that $\psi$ is always a bijection whose inverse map is given by
	\[ \T \mapsto \left(\prescript{\perp_1}{}{\Y_\T}, \Y_\T\right) \]
	where $\Y_\T = \{X \in \Kproj \ | \ H^0(X)  \in \T \}$. It suffices to show that $(\prescript{\perp_1}{}{\Y_\T})^{\perp_1}= \Y_\T$. 
	Remark that for any $X \in \Kproj$, $X \simeq X_M \oplus Q[1]$, where $X_M$ is the minimal projective presentation of $M = H^0(X)$ and $Q \in \proj \Lambda$. By \cite[{Lemma 2.6}]{plamondon2013generic}, for any $Y \in \Kproj$ we have that $\EE(X_M,Y) \simeq  D\Hom_\Lambda(H^0(Y), \tau M)$. We deduce that for any $X, Y \in \Kproj$,
	\begin{align*}\EE(X, Y)  \simeq \EE(X_M,Y) &\oplus \EE(Q[1],Y) \simeq D\Hom_\Lambda(H^0(Y), \tau M) \oplus \Hom_{[-1,0]}(Q,Y) \\ &\simeq D\Hom_\Lambda(H^0(Y), \tau M) \oplus \Hom_\Lambda(Q, H^0(Y)).
	\end{align*}
	This implies that for any additive subcategory $\HH \subset \Kproj$, we have that 
	\begin{align*}
		\HH^{\perp_1} &= \{X \in \Kproj \ | \EE(\HH,X) = 0\} \\
		 & = \add \left(\left\{X_N \ | \ N \in \prescript{\perp}{}{\tau H^0(\HH)} \cap \bigl(\HH \cap \add(\Lambda[1])\bigr)[-1]^\perp \right\} \cup \add(\Lambda[1]) \right) \\
		 \prescript{\perp_1}{}{\HH} &= \{X \in \Kproj \ | \ \EE(X, \HH) = 0\}\\
	& = \add \left( \left\{ X_M \ | \ \tau M \in H^0(\HH)^\perp \right\} \cup \left(\add(\Lambda) \cap \prescript{\perp}{}{H^0(\HH)}\right)[1] \right).
	\end{align*}
	Now let $\T \subset \Mod \Lambda$ be a torsion class and let $\Y_\T = \{X \in \Kproj \ | \  H^0(X) \in \T \}$. Since $\T$ is closed under direct summands and for every $X, Y \in \Kproj$, $H^0( X\oplus Y) \simeq H^0(X)\oplus H^0(Y)$, we readily see that $\Y_\T$ is additive. This implies that 
	\[\prescript{\perp_1}{}{\Y_\T} = \add \left( \left\{ X_M \ | \ \tau M \in \T^\perp \right\} \cup \left(\add(\Lambda) \cap \prescript{\perp}{}{\T}\right)[1] \right). \]
	Hence
		\[H^0(\prescript{\perp_1}{}{\Y_\T}) = \{M \in \Mod \Lambda \ | \ \tau M \in \T^\perp\} = \add\left(\tau^{-1}(\T^\perp) \cup \add(\Lambda)\right)\]
	and
		\[ \prescript{\perp_1}{}{\Y_\T} \cap \add(\Lambda [1]) = \left(\add(\Lambda) \cap \prescript{\perp}{}{\T}\right)[1]. \]
	Finally, we get that 
	\begin{align*}\label{perpYperp}
		 (\prescript{\perp_1}{}{\Y_\T})^{\perp_1} &= \\
		 = &\add \left(\left\{X_N \ | \ N \in \prescript{\perp}{}{\tau H^0(\prescript{\perp_1}{}{\Y_\T})} \cap \left(\prescript{\perp_1}{}{\Y_\T} \cap \add(\Lambda[1])\right)[-1]^\perp \right\} \cup \add(\Lambda[1]) \right) \\
		 = &\add \left(\left\{ X_N \ | \ N \in \prescript{\perp}{}{\tau\left(\tau^{-1}(\T^\perp) \cup \add(\Lambda)\right)} \cap \left(\add(\Lambda) \cap \prescript{\perp}{}{\T}\right)^\perp \right\} \cup \add(\Lambda[1]) \right)
	\end{align*}
The only thing left to prove is that \[\prescript{\perp}{}{\tau\left(\tau^{-1}(\T^\perp) \cup \add(\Lambda)\right)} \cap \left(\add(\Lambda) \cap \prescript{\perp}{}{\T}\right)^\perp = \T. \] 
Note that $\tau\left(\tau^{-1}(\T^\perp) \cup \add(\Lambda)\right) = \tau\left(\tau^{-1}(\T^\perp)\right)$. Since  $\tau\left(\tau^{-1}(\T^\perp)\right)$ and $\T^\perp \setminus (\T^\perp \cap \inj \Lambda)$ have the same indecomposables, we get that $$\prescript{\perp}{}{\tau\left(\tau^{-1}(\T^\perp) \cup \add(\Lambda)\right)} = \prescript{\perp}{}{(\T^\perp \setminus (\T^\perp \cap \inj \Lambda))}.$$ Given that $\T$ is a torsion class and hence $\T = \prescript{\perp}{}{(\T^\perp)}$, we have that $$\T \subset \prescript{\perp}{}{\left(\T^\perp \setminus (\T^\perp \cap \inj \Lambda)\right)} \cap  \left(\add(\Lambda) \cap \prescript{\perp}{}{\T}\right)^\perp. $$

For the other inclusion, let $M$ in $\prescript{\perp}{}{\left(\T^\perp \setminus (\T^\perp \cap \inj \Lambda)\right)} \cap  \left(\add(\Lambda) \cap \prescript{\perp}{}{\T}\right)^\perp$. We want to prove that $M \in \prescript{\perp}{}{(\T^\perp)}$. Let $L$ be an indecomposable in $\T^\perp$. If $L$ is not injective, then $\Hom_\Lambda(M,L) = 0$ by definition of $M$. Assume thus that $L = I_i$, where $I_i \in \T^\perp \cap \inj \Lambda$ is the injective envelope of the simple $S_i$. Recall that for any $N \in \Mod \Lambda$, $\Hom(N, I_i) = 0$ if and only if $\Hom_\Lambda(P_i, N) = 0$, where $P_i$ is the projective cover of $S_i$. Thus $I_i \in \T^\perp \cap \inj \Lambda$ if and only if $\Hom_\Lambda(\T, I_i) = 0 = \Hom_\Lambda(P_i, \T)$, which is equivalent to $P_i \in \add(\Lambda) \cap \prescript{\perp}{}{\T}$. But $M \in \left( \add(\Lambda) \cap \prescript{\perp}{}{\T}\right)^\perp$ as well, so we have that
\[\Hom_\Lambda(P_i, M) = 0 = \Hom_\Lambda(M, I_i).\]
We conclude that $M \in \prescript{\perp}{}{(\T^\perp)} = \T$. In particular we get that 
\[ (\prescript{\perp_1}{}{\Y_\T})^{\perp_1} =\add \left( \{ X_N \ | \ N \in \T \ \} \cup \add(\Lambda[1])\right) = \Y_\T\]
which implies that $(\prescript{\perp_1}{}{\Y_\T}, \Y_\T)$ is a cotorsion pair. 
\end{proof}

\cref{cotortorbij} induces a ``mirror" of \cref{demonet_finite} in the category of projective presentations: 
\begin{corollary}
	Let $\Lambda$ be a finite-dimensional $\Bbbk$-algebra. The following are equivalent:
	\begin{enumerate}
		\item $\Lambda$ is $g$-finite.
		\item There exist finitely many complete cotorsion pairs in $\Kproj$.
		\item All cotorsion pairs in $\Kproj$ are complete.
	\end{enumerate}
\end{corollary}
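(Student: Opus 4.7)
The plan is to derive this three-way equivalence as a direct consequence of \cref{cotortorbij} together with the Pauksztello--Zvonareva restriction in \cref{PZoriginal} and the Demonet--Iyama--Jasso result in \cref{demonet_finite}. The key observation is the compatibility of two bijections induced by $\psi = H^0$: the bijection $\ctor \Lambda \to \tor \Lambda$ between all cotorsion pairs and all torsion classes, and its restriction to a bijection between complete cotorsion pairs and functorially finite torsion classes. Transferring finiteness and completeness statements across these two identifications produces all three implications.

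For $(1) \Leftrightarrow (2)$, I would simply invoke that by \cref{demonet_finite} condition $(1)$ is equivalent to $\Mod \Lambda$ having only finitely many functorially finite torsion classes; the restricted bijection of \cref{PZoriginal} then converts this verbatim into finiteness of the set of complete cotorsion pairs in $\Kproj$, which is condition $(2)$.

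For $(1) \Rightarrow (3)$, assuming $\Lambda$ is $g$-finite, \cref{demonet_finite} tells us every torsion class in $\Mod \Lambda$ is functorially finite. Given any $(\X, \Y) \in \ctor \Lambda$, \cref{cotortorbij} guarantees that $H^0(\Y) \in \tor \Lambda$, hence functorially finite; \cref{PZoriginal} then provides a complete cotorsion pair $(\X', \Y')$ with $H^0(\Y') = H^0(\Y)$, and injectivity of the full bijection $\psi$ forces $(\X', \Y') = (\X, \Y)$, so $(\X, \Y)$ is itself complete. Conversely, if every cotorsion pair is complete, then the restriction of $\psi$ to complete cotorsion pairs shares its domain $\ctor \Lambda$ with the full bijection; comparing codomains forces every torsion class in $\Mod \Lambda$ to be functorially finite, and \cref{demonet_finite} then delivers $(1)$.

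I do not anticipate any substantive obstacle here: the hard work is already carried out in \cref{cotortorbij} and in the existing literature, so the corollary reduces to keeping track of how the restricted and full bijections interact on complete versus arbitrary cotorsion pairs.
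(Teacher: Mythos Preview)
Your proposal is correct and follows essentially the same approach as the paper: the paper's proof consists of the single sentence ``The implications follow from \cref{cotortorbij}, \cref{PZoriginal} and \cref{demonet_finite},'' and your argument is precisely a detailed unpacking of how those three results combine to give all the equivalences.
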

\begin{proof}
	The implications follow from \cref{cotortorbij}, \cref{PZoriginal} and \cref{demonet_finite}.
\end{proof}
\begin{example}[Kronecker Quiver] Consider $\Lambda$ to be the path algebra the Kronecker quiver \[\begin{tikzcd} 1 \arrow[r,shift left=.75ex,"\alpha"] \arrow[r,shift right=.75ex,"\beta", swap] & 2 \end{tikzcd}. \]
Recall that all torsion classes of $\Lambda$ can be described as the additive closure of one of the following four types of subsets of indecomposable modules :
\begin{enumerate}
	\item Any final part of the preinjective component of the Auslander-Reiten quiver of $\Mod \Lambda$. 
	\item All preinjectives and a subset of the tubes of the Auslander-Reiten quiver of $\Mod \Lambda$.
	\item All preinjectives, all tubes and a final part of the postprojective component of the Auslander-Reiten quiver of $\Mod \Lambda$
	\item The module $P_1$ whose dimension vector is $(1,0)$.
\end{enumerate}

\begin{figure}[h!]
	\makebox[\textwidth][c]{\includegraphics[width=1.3\textwidth]{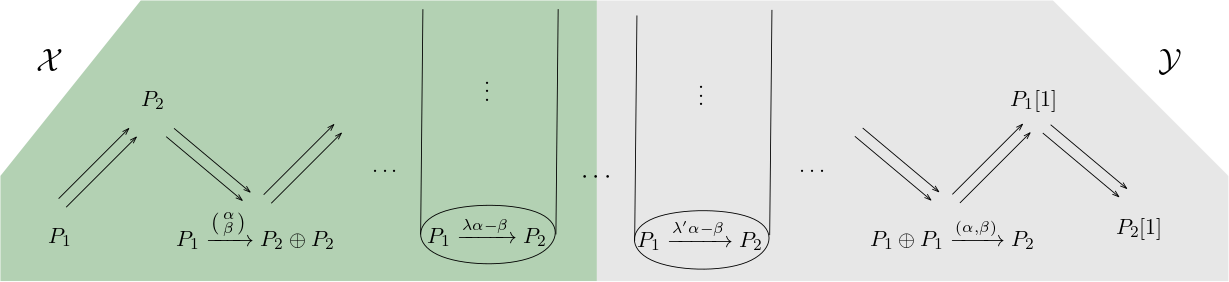}}
	\caption{\centering Example of a non-complete cotorsion pair $(\X, \Y)$ in $\Kproj$ when $\Lambda$ is the Kronecker algebra. Here $\lambda$ and $\lambda'$ lie in two disjoint parts of $\mathbb{P}(\Bbbk)$. The object at the base of the tube corresponding to the point at infinity is $P_1 \xrightarrow{\alpha} P_2$.}
	\label{kro}
\end{figure}

From these torsion classes, all are functorially finite with the exception of those of type (2). As a result of \cref{cotortorbij}, we can explicitly describe all cotorsion pairs of $\Kproj$. All complete ones are easily described by the results of T.~Adachi and M.~Tsukamoto in \cite{adachi2022hereditary}, who showed that any complete cotorsion pair $(\X, \Y)$ in $\Kproj$ satisfies that $\X= \CCone(\add(U), \add(U))$ and $\Y= \Cone(\add(U), \add(U))$ for a certain two-term silting object $U \in \Kproj$ satisfying $\X \cap \Y= \add(U)$. A non-complete cotorsion pair $(\X, \Y)$ is presented in \cref{kro}. Note that in this case $\X \cap \Y$ is always trivial.

\end{example}

\subsection{All thick subcategories have enough injectives and projectives}
Let $\Lambda$ be a finite-dimensional $\Bbbk$-algebra. A \textit{wide subcategory} of $\Mod \Lambda$ is a full additive subcategory of $\Mod \Lambda$ that is closed under extensions, kernels and cokernels. We say that a wide subcategory is \textit{left finite}, if the smallest torsion class containing it is functorially finite. Left finite wide subcategories were defined by F.~Marks and J.~{\v{S}}{\v{t}}ov{\'\i}{\v{c}}ek in \cite{marks2017torsion}, where it was shown that they are in bijection with functorially finite torsion classes. Given that the definition of a left finite wide subcategory is given by a condition on its associated torsion class, the following results follows immediately from \cref{demonet_finite}. 
\begin{corollary}\cite{demonet2019tilting, marks2017torsion}\label{allwideareleft}
	If $\Lambda$ is a g-finite finite-dimensional $\Bbbk$-algebra, then all wide subcategories are left finite.
\end{corollary}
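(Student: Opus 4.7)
The plan is to derive this statement as an immediate consequence of the equivalence $(1) \Leftrightarrow (3)$ in \cref{demonet_finite} together with the very definition of a left finite wide subcategory. Recall that by definition, a wide subcategory $\W \subset \Mod \Lambda$ is left finite precisely when the smallest torsion class containing $\W$ is functorially finite. Hence establishing the statement reduces to showing that, under the $g$-finiteness hypothesis, the torsion class generated by any wide subcategory is functorially finite.

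First I would fix an arbitrary wide subcategory $\W \subset \Mod \Lambda$. Since torsion classes in $\Mod \Lambda$ are closed under arbitrary intersections, the smallest torsion class $\T(\W)$ containing $\W$ is well-defined (it is the intersection of all torsion classes containing $\W$, which is nonempty because $\Mod \Lambda$ itself is such a class). Next I would invoke \cref{demonet_finite}: the hypothesis that $\Lambda$ is $g$-finite gives, via the equivalence $(1) \Rightarrow (3)$, that \emph{every} torsion class in $\Mod \Lambda$ is functorially finite. In particular $\T(\W)$ is functorially finite, and so by definition $\W$ is left finite. Since $\W$ was arbitrary, every wide subcategory of $\Mod \Lambda$ is left finite.

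There is essentially no technical obstacle here: once \cref{demonet_finite} is available, the argument is a one-line unpacking of definitions. The only point worth being careful about is making explicit that the Marks--\v{S}\v{t}ov\'i\v{c}ek definition of ``left finite'' depends solely on functorial finiteness of the associated torsion class, so that the universal quantifier ``all torsion classes are functorially finite'' in \cref{demonet_finite}(3) directly translates into the universal quantifier ``all wide subcategories are left finite'' in the conclusion.
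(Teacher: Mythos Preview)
Your proposal is correct and matches the paper's approach exactly: the paper states just before the corollary that ``the definition of a left finite wide subcategory is given by a condition on its associated torsion class,'' so the result ``follows immediately from \cref{demonet_finite},'' which is precisely the one-line unpacking you give.
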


In \cite{garcia2023thick}, new maps between cotorsion pairs, thick subcategories and silting complexes in $\Kproj$ were introduced. These maps mirror the maps between torsion classes, wide subcategories and support $\tau$-tilting pairs of $\Mod \Lambda$ introduced in \cite{adachi2014tilting}, \cite{ingalls2009noncrossing} and \cite{marks2017torsion}; and restrict to bijections for certain subsets of these sets of objects. We denote by $\thi \Lambda$ the set of thick subcategories of $\Kproj$ and by $\fthi \Lambda$ the set of those thick subcategories that have enough injectives with respect to the extriangulated structure inherited of $\Kproj$.

\begin{theorem}\cite[Theorem 3.1]{garcia2023thick}\label{beta}
	Let $\Lambda$ be a finite-dimensional $\Bbbk$-algebra. There exists a well defined map
	$$\ctor \Lambda  \overset{\beta}{\longrightarrow} \thi \Lambda $$
 given by $$\beta(\X, \Y) = \{ X \in \X \ | \ \forall \ \text{conflation } X \rightarrowtail X' \twoheadrightarrow X'' \text{ such that } X' \in \X, \text{ then } X'' \in \X\}.$$ 
for any $(\X, \Y) \in \ctor \Lambda$. Moreover, $\beta$ induces a bijection between the set of complete cotorsion pairs and that of thick subcategories with enough injectives. 
\end{theorem}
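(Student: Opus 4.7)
My plan is in two stages: first verify that $\beta(\X, \Y)$ is thick in $\Kproj$ so that the map $\beta$ is well-defined, and then construct an inverse on the restrictions to complete cotorsion pairs and to thick subcategories with enough injectives.

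For well-definedness I would establish the four closure axioms of a thick subcategory for $\beta(\X, \Y)$. Closure under direct sums and summands follows from the corresponding properties of $\X$ by combining any conflation testing the defining condition with a split conflation on the complementary summand. Closure under extensions is more delicate: if $X_1, X_2 \in \beta(\X, \Y)$ fit in a conflation $X_1 \rightarrowtail X \twoheadrightarrow X_2$ and $X \rightarrowtail X' \twoheadrightarrow X''$ is a conflation with $X' \in \X$, I would use the (ET4) axiom (the octahedral-type axiom for extriangulated categories) to produce a commutative $3 \times 3$ diagram that reduces the membership $X'' \in \X$ to the defining properties of $X_1$ and $X_2$ individually. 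Closure under cones holds essentially by definition, together with an additional diagram chase showing that the cone itself inherits the defining condition. Closure under cocones uses the fact that $\Kproj$ is hereditary $0$-Auslander, so that $\EE^2 = 0$ already forces $\X$ to be closed under cocones; one then verifies via another (ET4) diagram that the defining condition of $\beta(\X, \Y)$ propagates to cocones.

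For the bijection, the inverse $\alpha \colon \fthi \Lambda \to \cctor \Lambda$ sends a thick subcategory $\HH$ with enough injectives to the cotorsion pair $(\X_\HH, \Y_\HH) = ({}^{\perp_1}(\inj(\HH)^{\perp_1}), \inj(\HH)^{\perp_1})$; the enough-injectives hypothesis is precisely what supplies the approximation sequences needed for completeness. To check $\beta \circ \alpha = \mathrm{id}_{\fthi \Lambda}$, one verifies that $\beta(\X_\HH, \Y_\HH)$ equals $\HH$, using that objects of $\HH$ admit resolutions by $\inj(\HH)$ and that, conversely, any object of $\beta(\X_\HH, \Y_\HH)$ is built from $\inj(\HH)$ by filtration. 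For $\alpha \circ \beta = \mathrm{id}_{\cctor \Lambda}$, one identifies the core $\X \cap \Y$ of a complete cotorsion pair with the injectives of $\beta(\X, \Y)$ and then invokes the fact (cf.~the Adachi--Tsukamoto theorem used in the Kronecker example) that a complete cotorsion pair is determined by its core.

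The main obstacle I expect is proving that $\beta(\X, \Y)$ has enough injectives when $(\X, \Y)$ is complete. For $X \in \beta(\X, \Y)$, completeness yields a conflation $X \rightarrowtail Y \twoheadrightarrow X'$ with $Y \in \Y$ and $X' \in \X$, and applying $\EE(-, Z)$ for $Z \in \Y$ together with $\EE^2 = 0$ forces $Y \in \X \cap \Y$. The delicate step is showing $Y \in \beta(\X, \Y)$ itself: one must verify that for any conflation $Y \rightarrowtail Y'' \twoheadrightarrow Y'''$ with $Y'' \in \X$, the third term $Y'''$ remains in $\X$, which requires exploiting both the $\X$- and $\Y$-membership of $Y$ together with the hereditary structure of $\Kproj$. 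Once this is done, the cone closure of $\beta(\X, \Y)$ established above gives $X' \in \beta(\X, \Y)$, making the conflation $X \rightarrowtail Y \twoheadrightarrow X'$ internal to $\beta(\X, \Y)$ and realising $Y$ as the sought injective envelope.
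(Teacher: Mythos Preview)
This statement is not proved in the present paper: it is quoted verbatim as \cite[Theorem 3.1]{garcia2023thick} and no argument is given here. Consequently there is no ``paper's own proof'' to compare your proposal against; the result is imported from the earlier paper and used as a black box (together with the companion results \cite[Theorem 3.15, Lemma 3.9, Lemma 3.11]{garcia2023thick}) in the proofs of \cref{WwTt} and \cref{allthick}.

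That said, a brief comment on your sketch. The outline for well-definedness (closure axioms via (ET4) diagrams and the hereditary condition $\EE^2=0$) is the natural one and should go through. Your proposed inverse, however, requires more care. You set $\alpha(\HH)=\bigl({}^{\perp_1}(\inj(\HH)^{\perp_1}),\,\inj(\HH)^{\perp_1}\bigr)$ and later identify $\inj(\beta(\X,\Y))$ with the core $\X\cap\Y$. But the commutative diagram of \cref{main_coro} shows that, for a complete cotorsion pair with $\X\cap\Y=\add(U)$ and $U$ silting, one has $\beta(\X,\Y)=\thi_{[-1,0]}(U_\rho)$ for the \emph{specific summand} $U_\rho$ of $U$ described in \cref{theo-thick-wide}; in particular the injectives of $\beta(\X,\Y)$ form $\add(U_\rho)$, not $\add(U)=\X\cap\Y$. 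So to make $\alpha\circ\beta=\mathrm{id}$ work you must show that $U_\rho^{\perp_1}=U^{\perp_1}$ in $\Kproj$, which is true but hinges on the defining property of $U_\rho$ (coming from the minimal right $\add(U)$-approximation of $\Lambda[1]$) and is not a formality. Likewise, your ``delicate step'' showing $Y\in\X\cap\Y$ lies in $\beta(\X,\Y)$ does not follow from the long exact sequence alone: for a conflation $Y\rightarrowtail Y''\twoheadrightarrow Y'''$ with $Y''\in\X$, the sequence only gives that $\EE(Y''',Z)$ is a quotient of $\Hom(Y,Z)$, which need not vanish. One instead needs the finer information that $\X\cap\Y=\add(U)$ is presilting and that $\X$ is closed under cones of maps between objects of $\add(U)$.
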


\begin{theorem}\cite[Theorem 3.15]{garcia2023thick}\label{theo-thick-wide}
	Let $\Lambda$ be a finite-dimensional $\Bbbk$-algebra. There exists a well defined map
	\[\thi \Lambda \overset{\Ww}{\longrightarrow} \wide \Lambda\]
	such that, when restricted to thick subcategories with enough injectives and left finite wide subcategories, it fits in the following commutative diagram

	\vspace{1mm}
	\begin{center}
		\begin{tikzcd}[cramped,column sep=large, row sep=small]
			& & \tsilt \Lambda \arrow[dr, "\thi(U_\rho)"] & & \\
			&  \cctor\Lambda \arrow[ur, "\Psi"]  \arrow[rr, "\beta", crossing over]& & \fthi \Lambda& \\ 
			\mathclap{\K^{[-1,0]}(\proj \Lambda)} & & & &  \\
			{} \arrow[rrrr, dashed, no head, crossing over]& & & & {} \\
			& & & & \mathclap{\Mod \Lambda} \\
			& \ftor\Lambda \arrow[from= uuuu, "\Phi", crossing over, crossing over clearance=7ex] & & \fwide\Lambda \arrow[from= ll, "\alpha"] \arrow[from= uuuu, crossing over, crossing over clearance=7ex,"\Ww"] & 
		\end{tikzcd}
	\end{center} 
	In particular, $\Ww$ and the map taking any $U \in \tsilt \Lambda $ to the thick category $\thi(U_\rho) \in \fthi \Lambda$ are bijective. Here, $U_\rho$ is the basic direct summand of $U$ satisfying $\add(U_\rho) = \add(U')$, where $U'$ is such that $U' \rightarrow \Lambda[1]$ is a minimal right $\add(U)$-approximation of $\Lambda[1]$. 
\end{theorem}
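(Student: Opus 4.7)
The plan is to construct the map $\Ww$ explicitly on every thick subcategory, verify that it lands in $\wide \Lambda$, establish commutativity of the diagram on the complete/functorially finite side by linking it to \cref{cotortorbij} and \cref{beta}, and finally read off bijectivity from the bijections already in place. For a thick subcategory $\HH \subset \Kproj$, the natural candidate is
\[
    \Ww(\HH) = \left\{ H^0(I) \mid I \in \HH,\ \EE(H, I) = 0 \text{ for every } H \in \HH \right\},
\]
that is, $H^0$ applied to the $\HH$-injective objects. One must verify that $\Ww(\HH)$ is a wide subcategory of $\Mod \Lambda$: extension-closure comes from lifting a short exact sequence in $\Mod \Lambda$ to a conflation in $\HH$ (via the $0$-Auslander structure of $\Kproj$) and checking that the middle term remains $\HH$-injective modulo $\add(\Lambda)$, while closure under kernels and cokernels amounts to analyzing $\Cone(\tilde f)$ and $\CCone(\tilde f)$ for a lift $\tilde f : I \to J$ of a morphism between $\HH$-injectives.

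\textbf{Commutativity of the diagram.} For a complete cotorsion pair $(\X,\Y)$, set $U = \Psi(\X,\Y)$ (so $\add(U) = \X \cap \Y$). The identification $\beta(\X,\Y) = \thi(U_\rho)$ would follow by combining the explicit formula for $\beta$ from \cref{beta} with the characterization of $U_\rho$ via minimal right $\add(U)$-approximations of $\Lambda[1]$: the objects of $\beta(\X,\Y)$ that do not lie in $\add(\Lambda)$ are exactly those generated by $U_\rho$ under extensions and direct summands. The identity $\Ww \circ \beta = \alpha^{-1} \circ \Phi$ then reduces to showing that the $\thi(U_\rho)$-injectives are precisely the minimal projective presentations of the modules in the wide subcategory associated to the torsion class $H^0(\Y) = \Phi(\X,\Y)$ provided by \cref{cotortorbij}; this is a direct computation using the fact that $U_\rho$ governs the $\EE$-orthogonality relations appearing on both sides.

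\textbf{Bijectivity and main obstacle.} Once commutativity holds on $\cctor \Lambda$, bijectivity of the restricted $\Ww : \fthi \Lambda \to \fwide \Lambda$ is automatic, since $\beta$ (\cref{beta}), $\Phi$ (\cref{cotortorbij}), and the Marks--{\v{S}}{\v{t}}ov{\'\i}{\v{c}}ek bijection $\alpha$ (from \cite{marks2017torsion}) are all bijections. The map $U \mapsto \thi(U_\rho)$ is then bijective as well, by composing $\Psi^{-1} : \tsilt \Lambda \to \cctor \Lambda$ with $\beta : \cctor \Lambda \to \fthi \Lambda$. The main obstacle throughout is the construction step: proving that $\Ww(\HH)$ is genuinely closed under kernels and cokernels is delicate because $H^0 : \Kproj \to \Mod \Lambda$ does not preserve (co)cones in a naive way --- a conflation descends to a short exact sequence only after discarding $\add(\Lambda[1])$-summands --- so careful bookkeeping of shifted-projective summands is needed to lift kernels and cokernels in $\Mod \Lambda$ back to $\HH$-injectives, and this is also what forces the image of $\fthi \Lambda$ under $\Ww$ to land exactly in $\fwide \Lambda$.
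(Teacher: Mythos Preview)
There is a genuine gap at the very first step: your proposed definition of $\Ww$ is not the map used in the paper (which is recalled explicitly later in Section~4.2).  The paper's map is
\[
  \Ww(\HH)=\{\,M\in\Mod\Lambda \mid \Hom_\Lambda(x,M)\text{ is an isomorphism for every }X=(X^{-1}\xrightarrow{x}X^0)\in\HH\,\},
\]
and \cref{W_T_perp} shows this equals $\HH^{\perp_{\mathbb Z}}\cap\Mod\Lambda$ inside $\D^b(\Mod\Lambda)$.  Your candidate, namely $H^0$ of the $\HH$-injective objects, is a different construction and does not even agree with the intended map on basic examples.  For instance, if $\HH=\{0\}$ then every module $M$ trivially satisfies the isomorphism condition, so the paper's $\Ww(\{0\})=\Mod\Lambda$; your definition gives $H^0(0)=0$.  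Likewise, for an indecomposable projective $P$ and $\HH=\add(P[1])$ (a thick subcategory with enough injectives, the injective object being $P[1]$ itself), the paper's definition yields the nonzero wide subcategory $\{M\in\Mod\Lambda\mid \Hom_\Lambda(P,M)=0\}$, whereas $H^0(\add(P[1]))=0$.  So the subsequent steps in your outline (extension/kernel/cokernel closure of $H^0$ of the injectives, and the commutativity argument built on it) are aimed at the wrong object.

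Note also that the statement you are asked to prove is quoted from \cite{garcia2023thick}; the present paper does not reprove it but only recalls the definition of $\Ww$ and then uses the result.  What the paper does prove here (\cref{WwTt}) is that $\Ww$ and its companion $\Tt(\C)={}^{\perp_{\mathbb Z}}\C\cap\Kproj$ are mutually inverse on $\fthi\Lambda$ and $\fwide\Lambda$; the argument goes via the simple-minded collection attached to the Bongartz completion $T_U$ of an injective generator $U$ of $\HH$, together with \cref{perpsimple} and \cref{explicitthick}, and not via any $H^0$-of-injectives description.  If you want to salvage your approach, you would first need to replace your definition by the perpendicular one above; once that is done, the ``bijectivity from the other bijections'' part of your plan is indeed how one concludes, but the commutativity and the fact that $\Tt\circ\Ww=\mathrm{id}$ require the perpendicular-category viewpoint rather than the lifting of short exact sequences you sketch.
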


\begin{corollary}\cite[Corollary 1.1]{garcia2023thick}\label{main_coro}
	There are explicit bijections between:
	\begin{enumerate}[(i)]
		\item Isomorphism classes of basic silting objects in $\K^{[-1,0]}(\proj \Lambda)$.
		\item Complete cotorsion pairs in $\K^{[-1,0]}(\proj \Lambda)$.
		\item Thick subcategories in $\K^{[-1,0]}(\proj \Lambda)$ with enough injectives.
	\end{enumerate}	These bijections fit in the following commutative diagram: 
\end{corollary}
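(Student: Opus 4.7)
The plan is to derive this corollary as a direct bookkeeping consequence of \cref{beta} and \cref{theo-thick-wide}, with no additional substantive construction required. The three sets appearing in (i), (ii) and (iii) are exactly the three corners of the upper triangle of the commutative diagram displayed in \cref{theo-thick-wide}, and each of the three arrows of that triangle has already been asserted to be a bijection in the preceding theorems, so the corollary amounts to collecting these statements together.

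More concretely, the bijection (ii) $\leftrightarrow$ (iii) is supplied by the map $\beta : \cctor \Lambda \to \fthi \Lambda$, whose bijectivity on complete cotorsion pairs is the second assertion of \cref{beta}. The bijection (i) $\leftrightarrow$ (iii) is furnished by the assignment $U \mapsto \thi(U_\rho)$, which \cref{theo-thick-wide} identifies as a bijection $\tsilt \Lambda \to \fthi \Lambda$. Composing these two bijections yields a bijection $\tsilt \Lambda \to \cctor \Lambda$; this composition coincides with the map $\Psi$ appearing in the diagram of \cref{theo-thick-wide}, so it realises (i) $\leftrightarrow$ (ii) and is in particular a bijection.

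The only verification to record is that the three bijections $\Psi$, $\beta$ and $U \mapsto \thi(U_\rho)$ assemble into a commutative triangle, i.e.\ that $\beta(\Psi(U)) = \thi(U_\rho)$ for every $U \in \tsilt \Lambda$. This commutativity is precisely what the upper triangle of the diagram in \cref{theo-thick-wide} already encodes, so nothing further is needed; the diagram advertised in the corollary statement is simply that upper triangle, redrawn. There is no genuine obstacle: the corollary is a compilation whose content is already contained in \cref{beta} and \cref{theo-thick-wide}, and the proof consists essentially of invoking those two results.
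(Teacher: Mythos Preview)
Your proposal is correct and matches the paper's treatment: the corollary is stated as a citation from \cite{garcia2023thick} with no separate proof, and the accompanying diagram simply collects the bijections from \cref{beta} and \cref{theo-thick-wide} (together with the cited literature for the remaining edges). One minor point: in the paper's diagram the arrow between $\tsilt \Lambda$ and complete cotorsion pairs is attributed directly to \cite{adachi2022hereditary} rather than obtained by composing $\beta$ with $U\mapsto\thi(U_\rho)$, but since all three maps are bijections and the triangle commutes, your derivation by composition is equally valid.
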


\begin{figure}[h!]
	\centering
	\begin{tikzcd}[cramped,column sep=large, row sep=small]
		& & \tsilt \Lambda \arrow[ddddd, "{H^0}" {yshift= 2ex}, "\text{\cite{adachi2014tilting}}"' {yshift= 1.5ex}] \arrow[dr, "\thi(U_\rho)"] & & \\
		&  \ctor \Lambda \arrow[from = ur, "\textbf{\cite{adachi2022hereditary} }"' {xshift= 2ex}]  \arrow[rr, "\beta" {xshift= 3ex}, crossing over]& & \fthi \Lambda & \\ 
		\mathclap{\Kproj} & & & &  \\
		{} \arrow[rrrr, dashed, no head, crossing over]& & & & {} \\
		& & & & \mathclap{\Mod \Lambda} \\
		& & \tautilt \Lambda \arrow[dl, " \text{\cite{adachi2014tilting} }"' {xshift=2ex}] \arrow[dr]& & \\
		& \ftor \Lambda \arrow[from= uuuuu, "{H^0}", "\text{\cite{pauksztello2023cotorsion}}"', crossing over, crossing over clearance=7ex] & & \fwide \Lambda \arrow[from= ll, "\alpha", "\text{\cite{marks2017torsion}}"'] \arrow[from= uuuuu, crossing over, crossing over clearance=7ex,"\Ww"] & 
	\end{tikzcd}
	\label{prisma}
\end{figure}

Unlike being a left finite wide subcategory, having enough injectives is a characteristic of thick subcategories that is inherent to them. In this section, we establish an equivalence between being $g$-finite and having finitely many thick subcategories. Furthermore, we will show that if $\Lambda$ is $g$-finite, then every thick subcategory in $\Kproj$ is generated by a $2$-term presilting complex, and that we can choose it to be injective in the thick subcategory it generates. 

First, we will need to show that if $\Lambda$ is a $g$-finite algebra, then all non-trivial thick subcategories in $\Kproj$ contain a non-zero presilting complex. For any $X \in \Kproj$, we denote by $[X]$ the class of $X$ in the Grothendieck group $K_0(\Kproj)$.

\begin{proposition}\label{presilt_in_thick} Let $\Lambda$ be a finite-dimensional $\Bbbk$-algebra. Suppose that $\Lambda$ is $g$-finite and let $\HH \subset \Kproj$ be a full additive subcategory that is closed under extensions. If $X \in \HH$, then $\HH$ contains a presilting object $0 \neq U$ such that $[X] = [U]$ or $\HH$ contains a non-zero $P \in \proj \Lambda$ and its shift $P[1]$.
\end{proposition}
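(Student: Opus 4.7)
The plan is to combine the theory of degenerations of objects in $\Kb$ developed in \cite{bernt2005degenerations} with the $g$-finiteness hypothesis. Representing $X$ by a morphism $P^{-1} \to P^0$ in $\proj \Lambda$, I would consider the affine variety $\mathbb{H} = \Hom_\Lambda(P^{-1}, P^0)$ parametrizing two-term complexes with fixed graded components $P^{-1}$ and $P^0$; the action of $G = \Aut(P^{-1}) \times \Aut(P^0)$ has orbits corresponding to isomorphism classes of such complexes, all of which share the same $K_0(\Kproj)$-class $[X]$.

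Under $g$-finiteness, only finitely many basic presilting complexes appear with $g$-vector $[X]$ (up to trivial summands of type $P \oplus P[1]$, which contribute zero to the $K_0$-class and do not obstruct presiltingness of the remaining summand), and each such isomorphism class gives an open orbit in $\mathbb{H}$ since being presilting means vanishing of $\EE(-,-)$, an upper-semicontinuous condition. A dimension argument therefore forces the orbit of $X$ to contain some presilting complex $V$ in its closure, so that $X$ degenerates to $V$ in $\Kb$. By \cite{bernt2005degenerations}, such a degeneration is realized by a triangle
\[Z \to X \oplus Z \to V \to Z[1]\]
in $\Kb$. Reducing to a chain of codimension-one degenerations and taking two-term truncations, one can arrange $Z$ to lie in $\Kproj \cap \HH$, so that the above triangle becomes a conflation in $\Kproj$ whose outer terms lie in $\HH$; extension-closure of $\HH$ then gives $V \in \HH$.

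By Krull–Schmidt, $V$ decomposes as $V \simeq U \oplus \bigoplus_{j} \bigl(P_j \oplus P_j[1]\bigr)$ with $U$ presilting and $P_j \in \proj \Lambda$, as trivial summands of type $P \oplus P[1]$ are the only way the presilting property can fail inside the $G$-orbit closure while preserving the $g$-vector. Since $\HH$ is additive and hence closed under direct summands, either every $P_j$ vanishes, in which case $V = U$ is a non-zero presilting object in $\HH$ with $[U] = [X]$, or some $P_j$ is non-zero, providing $P_j$ and $P_j[1]$ both in $\HH$; in either case the conclusion follows.

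I expect the main obstacle to be the middle step: the Jensen–Madsen–Su triangle is a priori only available in $\Kb$, and one must ensure that $Z$ can be chosen in $\Kproj \cap \HH$. The strategy here is to factor the degeneration through a sequence of codimension-one specializations on $\mathbb{H}/G$, where at each step $Z$ can be taken to be an indecomposable summand of $X$ or $V$ (hence automatically in $\HH$ by additivity), and to use the $0$-Auslander structure of $\Kproj$ together with \cref{thekey1} to guarantee that the resulting triangles are conflations in the two-term category.
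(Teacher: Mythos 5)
Your overall strategy for the case $[X]\neq 0$ — use $g$-finiteness to produce a presilting complex with $g$-vector $[X]$ whose orbit is open (hence dense, since $\Hom_\Lambda(X^{-1},X^{0})$ is an irreducible affine space), and then transport it into $\HH$ via the degeneration theory of \cite{bernt2005degenerations} — is exactly the paper's approach. But the degeneration is run in the wrong direction, and this breaks the closure argument. Since $\HH$ is only assumed closed under extensions (and summands), the presilting complex must be exhibited as a direct summand of an \emph{iterated self-extension of $X$}, i.e.\ as a middle term of conflations whose outer terms are already known to lie in $\HH$; this is what \cref{degeneration} provides when $X\in\overline{G\cdot V}$, that is, when the presilting complex $V$ (padded by a contractible complex $Q\xrightarrow{\operatorname{id}}Q$ so as to live in the same variety, with existence of $V$ coming from \cref{completefan}) is the \emph{generic} point and $X$ its degeneration. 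You instead place $V$ in $\overline{G\cdot X}$ and write the triangle $Z\to X\oplus Z\to V\to Z[1]$, which exhibits $V$ as a \emph{cone}; concluding $V\in\HH$ from this would require closure under cones, which is not assumed and is precisely what fails for general extension-closed $\HH$ (e.g.\ the tubes for the Kronecker algebra). Note also that the relevant input from $g$-finiteness is \emph{existence} of a presilting complex with $g$-vector $[X]$ (completeness of the $g$-fan), not finiteness of the number of such complexes; by \cref{samegvector} there is at most one anyway.

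The second gap is the case $[X]=0$, which your argument does not reach. There is no non-zero presilting object with $g$-vector $0$, and the generic complex in $\Hom_\Lambda(P,P)$ is the contractible complex $P\xrightarrow{\operatorname{id}}P$, which is zero in $\Kproj$, so the degeneration argument yields nothing. Your proposed Krull--Schmidt decomposition $V\simeq U\oplus\bigoplus_j(P_j\oplus P_j[1])$ of the generic object does not occur: $P\oplus P[1]$ is not presilting (as $\Hom(P[1],P[2])\simeq\En(P)\neq 0$), and the trivial summands appearing in the dense orbit are contractible, not shifts. The alternative conclusion $P,P[1]\in\HH$ therefore needs a separate argument; the paper reduces to $X\simeq(P\xrightarrow{f}P)$ with $f$ radical, hence nilpotent, and constructs an explicit self-extension of $X$ isomorphic to $(P\xrightarrow{f^{2}}P)$, iterating until the differential vanishes. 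Finally, a smaller point: \cref{degeneration} is stated over an algebraically closed field, so for general $\Bbbk$ one must check (as the paper does) that the self-extensions produced over $\overline{\Bbbk}$ descend to $\Bbbk$.
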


To establish \ref{presilt_in_thick}, we will employ an algebraic-geometric result concerning the varieties of bounded complexes of projective modules over a finite-dimensional algebra. Let $p<q  \in \mathbb{Z}$ and consider $\C^{[p,q]}(\proj \Lambda) \subset \C^b(\proj \Lambda)$ the category of complexes of projective $\Lambda$-modules concentrated in degrees in the interval $[p,q]$. Fix a set of representatives $\{P_i\}_{1\leq 1\leq n}$ of the isoclasses of indecomposable projective $\Lambda$-modules, then for any choice of $\bar{l} = (l_p, \cdots, l_{p+j}, \cdots, l_{q}) \in (\mathbb{Z}_{\geq 0}^n)^{q-p+1}$ where $l_j = (l_{j,1}, l_{j,2}, \cdots, l_{j,n})$, we define $R_{\bar{l}}$ to be the closed subvariety 
\[R_{\bar{l}} \subset \prod_{j = 0}^{q-p-1} \Hom_\Lambda\left(\bigoplus_{i=1}^n P_i^{\oplus l_{p+j , i}} , \bigoplus_{i=1}^n P_i^{\oplus l_{p+j+1 , i}} \right)  \]
defined by the relation $f_{p+i+1} \circ f_{p+i} = 0$ for all $0 \leq i \leq q-p-2$. In other words, $R_{\bar{l}}$ parametrizes all complexes in $\C^{[p,q]}(\proj \Lambda)$ with $\bigoplus_{i=1}^n P_i^{\oplus l_{p+j , i}}$ in position $p + j$. The variety $R_{\bar{l}}$ is equipped with a group action of \[G_{\bar{l}} = \prod_{j = 0}^{q-p} \Aut_\Lambda\left(\bigoplus_{i=1}^n P_i^{\oplus l_{p+j , i}} \right)\] given by 
\[ (g_{p+j})_{0 \leq j \leq q-p} \cdot (f_{p+i})_{0 \leq i \leq q-p-1} = \left( g_{p+i+1}f_{p+i}g_{p+i}^{-1}\right)_{0 \leq i \leq q-p-1} .\] 
 
\begin{theorem}\cite[Theorem 2]{bernt2005degenerations} \label{degeneration}
	Let $\Lambda$ be a finite-dimensional $\Bbbk$-algebra and suppose that that  $\Bbbk = \bar{\Bbbk}$. Let  $\bar{l} = (l_p, \cdots, l_{p+j}, \cdots, l_{q}) \in (\mathbb{Z}_{\geq 0}^n)^{q-p+1}$ for some $p < q \in \mathbb{Z}$. Let $N, M \in R_{\bar{l}}$ such that $N \in \overline{G_{\bar{l}} \cdot M}$. Then there exists $m \in \mathbb{N}$ and exact sequences in $\C^b(\proj \Lambda)$
	\[ 0 \rightarrow N_0 \rightarrow N_{i+1} \rightarrow N_i \rightarrow 0\]
	for any $ 0 \leq i \leq m-1$ such that $N_0 = N$ and $N_{m} \simeq M \oplus N'$ for some $N' \in \C^{[p,q]}(\proj \Lambda)$. 
\end{theorem}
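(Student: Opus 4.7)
The plan is to apply Theorem~\ref{degeneration} to the variety $R_{\bar{l}}$ parametrizing 2-term complexes of projectives with the same shape $\bar{l}$ as $X$, leveraging $g$-finiteness to pin down the generic orbit. First I would reduce to $\Bbbk = \bar{\Bbbk}$, which preserves $g$-finiteness, membership of $X$ in $\HH$, and both alternatives in the conclusion. For 2-term complexes there is no composition constraint to impose on a differential, so $R_{\bar{l}}$ is just the affine $\Hom$-space $\Hom_\Lambda(l_{-1},l_{0})$ and is in particular irreducible; and because $\Lambda$ is $g$-finite, the set of iso classes of 2-term presilting objects in $\Kproj$ is finite, so the presilting locus $\mathcal{P}_{\bar{l}} \subset R_{\bar{l}}$ is a finite union of open $G_{\bar{l}}$-orbits.

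In the main case, there is a non-zero presilting iso class $U$ of shape $\bar{l}$. Irreducibility of $R_{\bar{l}}$ and openness of the orbit $G_{\bar{l}} \cdot U$ then force $X \in \overline{G_{\bar{l}}\cdot U}$. Applying Theorem~\ref{degeneration} with $N=X$ and $M=U$ yields a chain of short exact sequences in $\mathcal{C}^{[-1,0]}(\proj\Lambda)$ starting at $N_0 = X$ and ending with $N_m \simeq U \oplus N'$. Each such exact sequence of 2-term complexes is a conflation in $\Kproj$, so if $X \in \HH$ then extension-closedness inductively places every $N_i$ in $\HH$; in particular $U \oplus N' \in \HH$. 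Since $\HH$ is additionally closed under direct summands (implicit in ``full additive''), $U \in \HH$; and since $U$ and $X$ share their shape in $R_{\bar{l}}$, we have $[U] = [X]$ in $K_0(\Kproj)$, settling case~(a).

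In the remaining case the shape $\bar{l}$ admits no non-zero presilting representative, which forces $\bar{l}$ to exceed strictly the reduced shape prescribed by the $g$-vector $[X]$; intuitively, $\bar{l}$ contains an ``excess'' projective summand in both degrees. I would then produce a representative $M$ of a $G_{\bar{l}}$-orbit with $X \in \overline{G_{\bar{l}} \cdot M}$ and with $M \simeq P \oplus P[1] \oplus M''$ for some non-zero $P \in \proj\Lambda$; Theorem~\ref{degeneration} applied exactly as in the previous paragraph then places $M \in \HH$, whence summand-closure yields $P, P[1] \in \HH$, settling case~(b). The main obstacle is precisely the existence of such an $M$: one must show that when no presilting of shape $\bar{l}$ exists, the generic orbit of $R_{\bar{l}}$ must split off a $P \oplus P[1]$ summand. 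This step leans on the completeness of the $g$-fan granted by $g$-finiteness, which ensures that any $g$-vector is attained by a reduced presilting object and hence that any non-reduced shape decouples generically into such a pair.
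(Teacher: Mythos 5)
Your proposal does not actually prove the assigned statement. The statement is \cref{degeneration} itself, i.e.\ the theorem of \cite{bernt2005degenerations} asserting that whenever $N$ lies in the orbit closure $\overline{G_{\bar{l}} \cdot M}$ inside the variety $R_{\bar{l}}$ of complexes of projectives, there is a finite chain of short exact sequences $0 \to N_0 \to N_{i+1} \to N_i \to 0$ with $N_0 = N$ and $N_m \simeq M \oplus N'$. Your very first sentence is ``apply Theorem~\ref{degeneration}'', and what follows is a sketch of \cref{presilt_in_thick}, not of \cref{degeneration}: you invoke the degeneration theorem as a black box and never give any reason why an orbit-closure degeneration can be realized by such exact sequences. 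With respect to the assigned statement this is circular --- the key content (a Riedtmann--Zwara-type description of degenerations for complexes of projectives, which is what the cited reference actually establishes) is entirely absent. In the paper this theorem is imported verbatim from \cite{bernt2005degenerations} and is not reproved, so there is no internal proof to compare against; but any blind proof of it would have to engage with the geometry of $R_{\bar{l}}$ and the structure of its orbit closures, none of which appears in your text.

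Even read charitably as a proof of \cref{presilt_in_thick}, the sketch diverges from the paper and leaves its hardest step unjustified. The paper handles the case $[X] = 0$ by a purely algebraic self-extension argument (iterating a radical endomorphism $f$ of $P$ until $f^{2i} = 0$, exhibiting $P \oplus P[1]$ as a summand of an iterated self-extension of $X$), with no appeal to generic orbits; and in the case $[X] \neq 0$ it obtains the dense orbit by taking the presilting object $U$ with $[U] = [X]$ (which exists by \cref{completefan}) and padding it with an identity complex $Q \xrightarrow{\Id} Q$ so that $u \oplus \Id_Q$ has dense orbit in $\Hom(X^{-1}, X^0)$ by \cite[Lemma 2.16]{plamondon2013generic}. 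Your case (b), in which a shape admitting no presilting representative is claimed to split off $P \oplus P[1]$ generically, is precisely the point you flag as ``the main obstacle'' and is never established; in the paper that dichotomy does not arise in this form, since the two cases are separated by whether $[X]$ vanishes rather than by whether the shape $\bar{l}$ carries a presilting object.
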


We will also make use of the following known result, which follows from \cite{demonet2019tilting}.

\begin{proposition}\cite{demonet2019tilting}\label{completefan} Suppose that $\Lambda$ is $g$-finite. Then for any $\theta \in K_0(\Kproj)$ there exists a presilting object $X \in \Kproj$ such that $[X] = \theta$. 
\end{proposition}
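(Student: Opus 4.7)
The strategy is to combine the existence of a presilting complex with any prescribed class (\cref{completefan}) with the geometric degeneration theorem (\cref{degeneration}) to transfer a presilting representative into $\HH$. First, I would reduce to the case $\Bbbk = \bar{\Bbbk}$ so that \cref{degeneration} applies directly. Then, invoking \cref{completefan}, I produce a basic $2$-term presilting complex $U \in \Kproj$ with $[U] = [X]$ in $K_0(\Kproj)$; the goal becomes to show that $U$ itself lies in $\HH$, or else to exhibit a pair $P, P[1]$ inside $\HH$.

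Next, I would set up a common geometric arena for $X$ and $U$. Choose representatives $X : X^{-1} \to X^0$ and $U : U^{-1} \to U^0$ in $\C^{[-1,0]}(\proj \Lambda)$. The equality $[X] = [U]$ in $K_0(\Kproj)$ gives $X^0 \oplus U^{-1} \simeq X^{-1} \oplus U^0$ as projectives, so after adding suitable contractible summands of the form $P \xrightarrow{\mathrm{id}} P$ to each of $X$ and $U$ --- a modification that leaves both objects unchanged in $\Kproj$ --- they are represented as points of a single variety $R_{\bar{l}}$. The key geometric claim is that the $G_{\bar{l}}$-orbit of $U$ is open (hence dense, by irreducibility of $R_{\bar{l}}$): this rests on $g$-finiteness, which by \cite{demonet2019tilting} makes the $g$-fan complete, so that $[U]$ is realized by a unique rigid, orbit-open presilting representative. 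Consequently $X \in \overline{G_{\bar{l}} \cdot U}$.

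Applying \cref{degeneration} then produces a chain of short exact sequences
\begin{equation*}
0 \to X \to N_{i+1} \to N_i \to 0, \qquad 0 \leq i \leq m-1,
\end{equation*}
in $\C^{[-1,0]}(\proj \Lambda)$ with $N_0 = X$ and $N_m \simeq U \oplus N'$ for some $N' \in \Kproj$. Each such sequence is degree-wise split (its terms are projective), so it lifts to a conflation in $\Kproj$; extension-closure of $\HH$ together with $X \in \HH$ inductively gives $N_i \in \HH$ for every $i$, and in particular $U \oplus N' \in \HH$.

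The main obstacle is the extraction of $U$ from $U \oplus N' \in \HH$, since $\HH$ is not assumed to be closed under direct summands. My plan is to analyze $N'$ carefully. In the favourable case $N' \simeq 0$ in $\Kproj$, the conclusion $U \in \HH$ is immediate, with $U \neq 0$ provided $[X] \neq 0$. The case $[X] = 0$ falls into the second alternative: then $[U] = 0$ forces the basic presilting $U$ to contain a summand of the form $P \oplus P[1]$ for some nonzero projective $P$, and the filtration transports this pair into $\HH$. When $N' \neq 0$ and $[X] \neq 0$, I expect a finer geometric analysis --- tracking which orbits of $R_{\bar{l}}$ are traversed by the $N_i$ --- to either rule out this situation or to exhibit a contractible pair $P \oplus P[1]$ already appearing as a summand of some $N_i$, yielding the second alternative; this last refinement is the delicate point of the argument.
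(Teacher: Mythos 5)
Your proposal does not prove the stated proposition. \cref{completefan} asserts that, for a $g$-finite algebra, \emph{every} class $\theta \in K_0(\Kproj)$ is realized as $[X]$ for some presilting $X$; what you have written is instead an argument for \cref{presilt_in_thick} (that an extension-closed additive subcategory $\HH$ containing an object $X$ contains a presilting object with the same class, or a pair $P$, $P[1]$). Worse, your very first substantive step invokes \cref{completefan} itself to produce a presilting complex $U$ with $[U]=[X]$, so as a proof of \cref{completefan} the argument is circular: the degeneration theorem (\cref{degeneration}) can only transport an already-existing presilting representative of a given class into $\HH$ along a chain of self-extensions; it cannot manufacture such a representative in the first place, which is precisely what \cref{completefan} is about.

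The paper does not prove \cref{completefan}; it cites it as a consequence of \cite{demonet2019tilting}, and the relevant input is the completeness of the $g$-vector fan for $g$-finite algebras. Concretely: for each $U = \bigoplus_{i=1}^n U_i \in \tsilt \Lambda$ the classes $[U_1], \dots, [U_n]$ form a $\mathbb{Z}$-basis of $K_0(\Kproj)$, and the cones $C(U) = \mathrm{cone}\{[U_1], \dots, [U_n]\}$ form a fan in $K_0(\Kproj) \otimes_{\mathbb{Z}} \mathbb{R}$ which, when $\Lambda$ is $g$-finite, is finite and covers the whole space. Given $\theta$, choose $U$ with $\theta \in C(U)$; the coordinates of $\theta$ in the basis $\{[U_i]\}$ are then non-negative integers $a_1, \dots, a_n$, and $X = \bigoplus_i U_i^{\oplus a_i}$ is a presilting object with $[X] = \theta$. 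None of the geometric degeneration machinery enters here; it belongs to the proof of \cref{presilt_in_thick}, where the paper uses it essentially as you describe (including the reduction to $\Bbbk = \bar{\Bbbk}$ and the treatment of the case $[X]=0$ via nilpotence of a radical differential).
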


\begin{theorem}\cite[Theorem 6.5]{demonet2019tilting}\label{samegvector} Let $\Lambda$ be a finite-dimensional algebra and let $U$ and $V$ be $2$-term presilting complexes in $\Kproj$. Then $[U] = [V]$ if and only if $U \simeq V$. 
\end{theorem}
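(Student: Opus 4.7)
The strategy is to leverage the Adachi-Iyama-Reiten correspondence between basic 2-term presilting complexes in $\Kproj$ and basic $\tau$-rigid pairs $(M, Q)$ over $\Lambda$, where $M$ is a $\tau$-rigid $\Lambda$-module and $Q \in \proj \Lambda$ satisfies $\Hom_\Lambda(Q, M) = 0$. Under this bijection, a presilting complex $U$ decomposes uniquely as $U \simeq U^{\circ} \oplus Q_U[1]$, where $Q_U \in \proj \Lambda$ and $U^{\circ}$ has no direct summand of the form $Q'[1]$; setting $M_U := H^0(U^\circ)$, the complex $U^\circ$ is isomorphic to (the shift of) a minimal projective presentation $P_1(M_U) \xrightarrow{d} P_0(M_U) \twoheadrightarrow M_U$. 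Consequently, in $K_0(\Kproj) \simeq K_0(\proj \Lambda)$,
\[ [U] \;=\; [P_0(M_U)] - [P_1(M_U)] - [Q_U]. \]
Since both the decomposition $U^\circ \oplus Q_U[1]$ and the isomorphism class of $U^\circ$ are determined by $(M_U, Q_U)$, the theorem reduces to showing that the $g$-vector map $(M, Q) \mapsto [P_0(M)] - [P_1(M)] - [Q]$ from basic $\tau$-rigid pairs to $K_0(\proj \Lambda)$ is injective.

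To establish this injectivity, my plan is to use Bongartz completion (\cref{Bconp_silting}) to complete $U$ to a basic 2-term silting complex $T_U = U \oplus U'$ in $\Kproj$. The key structural input is the classical fact that the $g$-vectors of the indecomposable direct summands of any basic 2-term silting complex form a $\mathbb{Z}$-basis of $K_0(\Kproj)$ (equivalent to the Adachi-Iyama-Reiten statement that a basic support $\tau$-tilting pair has exactly $|\Lambda|$ indecomposable pieces whose $g$-vectors are linearly independent). Expanding the equality $[U] = [V]$ in the basis given by $T_U$, one would like to match, indecomposable by indecomposable, the summands of $V$ with summands of $T_U$, and in fact with summands of $U$. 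The crucial geometric fact behind this is that the collection of positive cones spanned by the $g$-vectors of basic 2-term silting complexes forms a simplicial fan — the $g$-fan of $\Lambda$ — whose rays are indexed by isomorphism classes of indecomposable 2-term presilting complexes.

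The main obstacle is the indecomposable case: showing that two indecomposable 2-term presilting complexes with the same $g$-vector must be isomorphic. This is essentially the statement that each ray of the $g$-fan is realized by a unique indecomposable 2-term presilting complex (with a primitive $g$-vector as its generator). Once this base case is handled, the general case follows by induction on $|U|$ using silting reduction as developed in Section~\ref{sec_reduction}: after removing a common indecomposable summand $X$ from both $U$ and $V$ (which exists by the base case applied to a shared ray), the quotients $\rho(U)$ and $\rho(V)$ become 2-term presilting complexes in the reduced extriangulated category $\pero(C_X)$ with equal $g$-vectors, to which the inductive hypothesis applies via \cref{bij_silting_silting_iyama} and the equivalence $\rho(\Kproj) \simeq \pero(C_X)$ established in \cref{rhothick}.
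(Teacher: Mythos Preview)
The paper does not prove this statement: it is quoted verbatim from \cite[Theorem 6.5]{demonet2019tilting} and no argument is given in the present paper. So there is no ``paper's own proof'' to compare your proposal against; the result is imported as a black box.

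That said, your outline has two genuine gaps. First, you correctly identify the indecomposable case as ``the main obstacle'', but you do not prove it: restating it as ``each ray of the $g$-fan is realized by a unique indecomposable $2$-term presilting complex'' is a reformulation, not an argument. The fact that the cones of basic $2$-term silting complexes form a simplicial fan (in particular, that distinct rays correspond to distinct indecomposable rigid objects) is essentially equivalent to the theorem you are trying to prove, so invoking it here is circular.

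Second, even granting the indecomposable case, your inductive step does not go through as written. From $[U]=[V]$ you cannot directly conclude that $U$ and $V$ share an indecomposable summand: writing $U=\bigoplus U_i$ and $V=\bigoplus V_j$, the equality $\sum_i [U_i]=\sum_j [V_j]$ in $K_0(\Kproj)$ does not force $[U_i]=[V_j]$ for some $i,j$ without already knowing that the rays spanned by the $[U_i]$ and the $[V_j]$ agree --- which is again the content of the theorem. The actual proof in \cite{demonet2019tilting} proceeds differently: one uses the Euler-form identity $\langle [U],\underline{\dim}\,X\rangle = \dim\Hom(U,X)-\dim\Hom(U,X[1])$ for $X\in\Mod\Lambda$ (or its $\tau$-rigid reformulation) together with Bongartz completion to force $\Hom(U,V[1])=0=\Hom(V,U[1])$ directly from $[U]=[V]$, and then concludes $U\simeq V$ from rigidity of $U\oplus V$ and equality of $g$-vectors. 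No induction via silting reduction is needed.
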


\begin{proof}[\textit{Proof of \cref{presilt_in_thick}}]
Note that if $\HH = \{0\}$ then the propositions follows immediately. Suppose then that there is $ 0 \neq X \in \HH$. 

\fbox{\textit{Case $[X] = 0$: }}  If $[X] = 0$, given that the only $2$-term presilting complex with zero $g$-vector is $0$, we have to show that there exists a non-zero projective module $P$ such that $\begin{tikzcd}[cramped, sep=small] P \arrow[d] \\ 0 \end{tikzcd} \in \HH$. Since $[X] = 0$, there exists $P \in \proj \Lambda$ such that $ X \simeq \begin{tikzcd}[cramped, sep=small] P \arrow[d, "f"] \\ P \end{tikzcd}$. If $f$ is not radical, then $X$ is isomorphic to some $\complex{P'\oplus P''}{{\begin{psmallmatrix} f' & 0 \\ 0 & a \end{psmallmatrix}}}{P' \oplus P}$ with $a$ invertible. Since $\HH$ is additive we can assume that $\complex{P'}{f'}{P'} \in \HH$. Thus we can suppose that $f$ is radical and hence there exists $m \geq 1$ such that $f^m = 0$. Consider the morphism $\delta \in \Hom(X, X[1])$ given by the commutative diagram 
\begin{center}
	\begin{tikzcd}
	 0 \arrow[r] \arrow[d] & P \arrow[d, "f"] \\
	 P \arrow[r, "\Id_P"] \arrow[d, "f"] & P \arrow[d] \\
	 P \arrow[r] & 0
\end{tikzcd}
\end{center}

Since $f$ is radical, then $\delta \neq 0$ and its mapping cone $\complex{P \oplus P}{{\begin{psmallmatrix} -f & 0 \\ 1_P &f \end{psmallmatrix}}}{P \oplus P}$ is isomorphic to the complex in the rightmost column of the following diagram
\begin{center}
	\begin{tikzcd}[ampersand replacement=\&, row sep = large, column sep= large]
		P \oplus P \arrow[d, "{\begin{pmatrix} -f & 0 \\ \Id_P & f \end{pmatrix}}"'] \arrow[r, "{\begin{pmatrix} \Id_P & f \\ 0 & \Id_P \end{pmatrix}}", "\simeq"']\& P \oplus P \arrow[d, "{\begin{pmatrix} 0 & f^2 \\ \Id_P & 0 \end{pmatrix}}"'] \\ P \oplus P \arrow[r, "{\begin{pmatrix} \Id_P & f \\ 0 & \Id_P \end{pmatrix}}"', "\simeq"] \& P \oplus P .
	\end{tikzcd}
\end{center}
This implies that $X^{(1)} = \Cone(\delta)[-1] \simeq \begin{tikzcd}[cramped, sep=small] P \arrow[d, "f^2"] \\ P \end{tikzcd}$ belongs to the thick subcategory $\HH$ since it is an self-extension of $X$. By repeating this argument we can construct objects $X^{(i)} \simeq \begin{tikzcd}[cramped, sep=small] P \arrow[d, "f^{2i}"] \\ P \end{tikzcd}$ for every $i \geq 1$. By choosing $i$ such that $2i \geq m$ we conclude that $P \oplus P[1] \simeq \begin{tikzcd}[cramped, sep=small] P \arrow[d, "0 = f^{2i}"] \\ P \end{tikzcd}$ is in $\HH$, and since $\HH$ is closed under direct summands, then both $P$ and $P[1]$ are in $\HH$. 

\fbox{\textit{Case $[X] \neq 0$: }}
First suppose that $\Bbbk= \bar{\Bbbk}$. Let $X \in \HH$ such that $[X]\neq 0$, then $X = \begin{tikzcd}[cramped, sep=small] X^{-1} \arrow[d, "f"] \\ X^0 \end{tikzcd}$, where $[X^{-1}] \neq [X^0]$. Recall that we can decompose the $g$-vector of $X$ as $[X] = \theta_X^+ - \theta_X^-$, where $\theta_X^+ = \left(\max\{0, \theta_{X,i}\}\right)_{1 \leq i \leq n}$ and $\theta_X^- = \left(\max\{0, -\theta_{X,i}\}\right)_{1 \leq i \leq n}$. By \cref{completefan} there exist a $2$-term presilting complex $U$ whose $g$-vector is $[X]$. Take $u$ the point in $\Hom(P^{\theta_X^-}, P^{\theta_X^+})$ corresponding to $U$, which has an open dense orbit (see for instance \cite[Lemma 2.16]{plamondon2013generic}). Choose $Q \in \proj \Lambda$ such that $P^{\theta_X^-}  \oplus Q = X^{-1}$ and $P^{\theta_X^+}  \oplus Q = X^{0}$, then $u \oplus \Id_Q$ still has an open dense orbit in $\Hom(X^{-1}, X^0)$. Hence $\overline{G \cdot u \oplus \Id_Q} = \Hom(X^{-1}, X^0)$, and $X \in \overline{G \cdot U \oplus \begin{tikzcd}[cramped, sep=small] Q \arrow[d, equal] \\ Q \end{tikzcd}}$. From \cref{degeneration} we deduce that $U \oplus \begin{tikzcd}[cramped, sep=small] Q \arrow[d, equal] \\ Q \end{tikzcd}$ can be constructed as a direct summand of a sequence of self-extensions of $X$ in $\C^{[-1,0]}(\proj \Lambda)$. In particular, $U$ is contained in $\HH$ and $[U] = [X]$. This proves the result over an algebraically closed field.

\smallskip
Now let $\Bbbk$ be any field and let $\mathbb{K} = \bar{\Bbbk}$. We have a fully faithful functor $\C^b(\proj \Lambda)\otimes_\Bbbk \mathbb{K} \hookrightarrow \C^b(\proj \Lambda \otimes_\Bbbk \mathbb{K})$ induced by $- \otimes_\Bbbk \mathbb{K}$. Let $X \in \HH$ be as before and consider $\bar{X} = X \otimes_\Bbbk \mathbb{K} \in \K^{[-1,0]}(\proj \Lambda \otimes_\Bbbk \mathbb{K})$. By the previous argument we can find a $2$-term presilting complex $\bar{U} \in \K^{[-1,0]}(\proj \Lambda \otimes_\Bbbk \mathbb{K})$ that is a direct summand of an object obtained by a sequence of self-extensions of $\bar{X}$. Explicitly, there are exact sequences in $\C^{[-1,0]}(\proj \Lambda \otimes_\Bbbk \mathbb{K})$
\begin{equation}\label{cdprojA}
	\begin{tikzcd}
		0 \arrow[r] & \bar{X}^{-1}_0 \arrow[r] \arrow[d, "\bar{f}_0"] & \bar{X}^{-1}_{i+1} \arrow[r] \arrow[d, "\bar{f}_{i+1}"] & \bar{X}^{-1}_i \arrow[d, "\bar{f}_i"] \arrow[r] & 0\\
		0 \arrow[r] & \bar{X}^{0}_0 \arrow[r] & \bar{X}^{0}_{i+1} \arrow[r] & \bar{X}^{0}_i \arrow[r] & 0
	\end{tikzcd}
\end{equation}
for $0 \leq i \leq m-1$ where $\bar{X} = \begin{tikzcd}[cramped, sep=small] X^{-1}\otimes_\Bbbk \mathbb{K} \arrow[d, "f\otimes_\Bbbk \mathbb{K}"] \\ X^0 \otimes_\Bbbk \mathbb{K} \end{tikzcd} = \begin{tikzcd}[cramped, sep=small] \bar{X}^{-1}_0 \arrow[d, "\bar{f}_0"] \\ \bar{X}^0_0 \end{tikzcd}$ and such that $\bar{U}$ is a direct summand of $\bar{X}_m = \begin{tikzcd}[cramped, sep=small] \bar{X}^{-1}_m \arrow[d, "\bar{f}_m"] \\ \bar{X}^0_m \end{tikzcd}$. 
In particular, for $i = 0$ we get 

\begin{equation}\label{cdproj}
	\begin{tikzcd}
		0 \arrow[r] & \bar{X}^{-1}_0 \arrow[r] \arrow[d, "\bar{f}_0"] & \bar{X}^{-1}_{1} \arrow[r] \arrow[d, "\bar{f}_{1}"] & \bar{X}^{-1}_0 \arrow[d, "\bar{f}_0"] \arrow[r] & 0 \ \text{\textcolor{white}{.}}\\
		0 \arrow[r] & \bar{X}^{0}_0 \arrow[r] & \bar{X}^{0}_{1} \arrow[r] & \bar{X}^{0}_0 \arrow[r] & 0 \ .
	\end{tikzcd}
\end{equation}
By definition, both lines in the commutative diagram \ref{cdproj} are short exact sequences. Since $\bar{X}^{-1}_0 = X^{-1}_0 \otimes_\Bbbk \mathbb{K}$ and $\bar{X}^{0}_0 = X^{0}_0 \otimes_\Bbbk \mathbb{K}$ are projective modules in $\proj \Lambda \otimes_\Bbbk \mathbb{K}$, both exact sequences split. In particular 
\begin{gather*}
	\bar{X}^{-1}_1 \simeq  \left(X^{-1}_0 \otimes_\Bbbk \mathbb{K}\right) \oplus \left(X^{-1}_0 \otimes_\Bbbk \mathbb{K}\right)   \simeq \left(X^{-1}_0 \oplus X^{-1}_0\right) \otimes_\Bbbk \mathbb{K} \\
	\bar{X}^{0}_1 \simeq  \left(X^{0}_0 \otimes_\Bbbk \mathbb{K}\right) \oplus \left(X^{0}_0 \otimes_\Bbbk \mathbb{K}\right)   \simeq \left(X^0_0 \oplus X_0^0\right) \otimes_\Bbbk \mathbb{K} 
\end{gather*}
and thus 
\[\begin{split}
	\bar{f}_1 \in & \Hom_{\Lambda \otimes_\Bbbk \mathbb{K}}\left(\left(X^{-1}_0 \oplus X^{-1}_0\right) \otimes_\Bbbk \mathbb{K}, \left(X^{0}_0 \oplus X_{0}^0\right) \otimes_\Bbbk \mathbb{K}\right) \\ \simeq & \Hom_\Lambda\left(X^{-1}_0 \oplus X^{-1}_0, X^{0}_0 \oplus X_{0}^0\right)\otimes_\Bbbk \mathbb{K}.
\end{split}\]
We deduce that the exact sequence of complexes \ref{cdproj} is induced by an exact sequence of complexes in $\C^{[-1,0]}(\proj \Lambda)$ under the functor $- \otimes_\Bbbk \mathbb{K}$. By applying the same argument for every $0 \leq i \leq m-1$ and its respective exact sequence \ref{cdprojA}, we deduce that all self-extensions of $\bar{X} = X \otimes_\Bbbk \mathbb{K}$ lie in $\C^b(\proj \Lambda)\otimes_\Bbbk \mathbb{K}$. In particular, there exists $U \in \Kproj$ which is a direct summand of a sequence of self-extensions of $X$ such that $\bar{U} \simeq U \otimes_\Bbbk \mathbb{K}$. Moreover, $U$ is presilting if and only if $\bar{U}$ is (see for instance \cite[Proposition 6.6 b)]{demonet2019tilting}), and $[X] = [\bar{X}] = [\bar{U}] = [U]$, which finishes the proof.
\end{proof}

\begin{theorem}\label{gfinite-thick}
	Let $\Lambda$ be a $g$-finite, finite-dimensional $\Bbbk$-algebra. Let $\HH$ be any thick subcategory of $\Kproj$, then there exists a presilting complex $U \in \Kproj$ such that $\HH = thick_{[-1,0]}(U)$.
\end{theorem}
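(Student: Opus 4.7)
My plan is to choose a basic $2$-term presilting complex $U \in \HH$ maximising the number of indecomposable summands $|U|$, and then show $\HH = \thi_{[-1,0]}(U)$ by silting reduction. When $\HH = \{0\}$ I take $U = 0$. Otherwise, \cref{presilt_in_thick} guarantees that $\HH$ contains a non-zero presilting object (in the second case of that proposition, the projective $P \neq 0$ is itself presilting), and \cref{maximal_silt} bounds $|U|$ above by $|\Lambda|$, so a maximiser exists. Since $U \in \HH$ and $\HH$ is thick, the inclusion $\thi_{[-1,0]}(U) \subset \HH$ is automatic; the reverse inclusion is the heart of the argument.

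For the reverse inclusion I would pass to the silting reduction $\rho : \Kb \rightarrow \J_\U \simeq \per(C_U)$ of \cref{iyama_theo_reduction}. By \cref{rhothick}, $\rho(\HH)$ is a thick subcategory of $\rho(\Kproj) \simeq \pero(C_U)$. Via the induction functor $p_*$ of \cref{indc_funct}, $\pero(C_U)$ is equivalent, as a $0$-Auslander extriangulated category, to $\K^{[-1,0]}(\proj H^0(C_U))$. By \cref{jasoo_red_algebra}, $H^0(C_U) \simeq C_{(M,P)}$, where $(M,P)$ is the support $\tau$-rigid pair corresponding to $U$; Jasso's $\tau$-tilting reduction \cite{jasso2015reduction} embeds the support $\tau$-tilting pairs of $C_{(M,P)}$ into those of $\Lambda$, so $H^0(C_U)$ inherits $g$-finiteness from $\Lambda$. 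Hence \cref{presilt_in_thick} applies to the thick subcategory $p_*(\rho(\HH))$.

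The strategy is then to argue $\rho(\HH) = \{0\}$. If not, \cref{presilt_in_thick} (transported across $p_*$) yields a non-zero $2$-term presilting object $V \in \rho(\HH)$. Using \cref{rhothick} I lift $V$ to some $\tilde V \in \HH \cap \Z_\U$ with $\rho(\tilde V) \simeq V$; by \cref{bij_silting_silting_iyama}, $U \oplus \tilde V$ is then presilting in $\Kb$, and since both summands lie in $\Kproj$ it is $2$-term. Because the equivalence $\Z_\U/[U] \xrightarrow{\bar\rho} \J_\U$ kills precisely the indecomposables of $\add(U)$ and $\rho(\tilde V) \neq 0$, the object $\tilde V$ admits an indecomposable summand outside $\add(U)$, giving $|U \oplus \tilde V| > |U|$ inside $\HH$; this contradicts the maximality of $|U|$. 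Hence $\rho(\HH) = \{0\}$, i.e., $\HH \subset \thi_b(U)$, and \cref{explicitthick} finishes: $\HH \subset \thi_b(U) \cap \Kproj = \thi_{[-1,0]}(U)$.

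The main obstacle I expect is step three: verifying that the lifted presilting object $\tilde V$ genuinely enlarges $U$. This relies on the compatibility between the extriangulated reduction of \cref{rhothick} and Iyama--Yang's triangulated reduction of \cref{bij_silting_silting_iyama}, together with the fact that the only indecomposables $\rho$ identifies with zero are those in $\add(U)$. A secondary point requiring care is the $g$-finiteness of $H^0(C_U)$, which propagates the hypothesis through the reduction and lets \cref{presilt_in_thick} be re-applied in the reduced category.
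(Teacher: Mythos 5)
Your proposal is correct and follows essentially the same route as the paper: produce a non-zero presilting object in $\HH$ via \cref{presilt_in_thick}, pass to the silting reduction $\rho(\HH) \subset \pero(C_U)$ and then through $p_*$ to $\K^{[-1,0]}(\proj H^0(C_U))$ (which is $g$-finite by Jasso's reduction), and use \cref{bij_silting_silting_iyama} to lift a non-zero presilting object back to a strictly larger presilting object of $\HH$, concluding with \cref{explicitthick}. The only (cosmetic) difference is that you phrase the termination as a maximality argument on $|U|$ bounded by $|\Lambda|$, whereas the paper builds an increasing chain $\add(V_i) \subsetneq \add(V_{i+1})$ that must stabilize; these are the same argument.
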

\begin{proof}
	Let $\Lambda$ be any $g$-finite finite-dimensional $\Bbbk$-algebra and let $\HH$ be a thick subcategory of $\Kproj$. If $\HH = \{0\}$ then we are done. If not, take $0 \neq X \in \HH$. By \cref{presilt_in_thick} there exists a presilting $0 \neq U$ in $\HH$ and we let $\thi_b(U)$ be the thick subcategory of $\Kb$ generated by $U$. By \cref{explicitthick} we know that $\thi_{[-1,0]}(U) = \thi_b(U) \cap \Kproj$. Consider $\J_U = \Kb /\thi_b(U) $ and $\rho : \Kb \rightarrow \J_U$ as in \cref{iyama_theo_reduction}. By \cref{rhothick}, we know that $\HH' = \rho(\HH)$ is a thick subcategory of $ \rho(\Kproj) \simeq \pero(C_U)$. 
	
 	Now consider the functor $$p_*: \pero(C_U) \rightarrow \K^{[-1,0]}(\proj H^0(C_U))$$ induced by the canonical projection $p : C_U \rightarrow H^0(C_U)$ as in \cref{indc_funct}. This functor induces an equivalence $p_*: \pero(C_U)/\mathcal{I} \rightarrow \K^{[-1,0]}(\proj H^0(C_U))$ where $\mathcal{I}$ is the ideal of morphisms that factor trough a morphism $X[1] \rightarrow Y$ for $X, Y \in \add(C_U)$. By \cref{thekey}, $p_*(\HH')$ is closed under extensions and direct summands. Moreover, if $\HH' \neq \{0\}$ then $p_*(\HH') \neq \{0\}$ since $p_*$ preserves isomorphism classes. 
	As recalled in \cref{jasoo_red_algebra}, $\Mod(H^0(C_U))$ is equivalent to the $\tau$-tilting reduction of $\Mod \Lambda$ associated to $U$, in particular $H^0(C_U)$ is $g$-finite by \cite[Theorem 3.16]{jasso2015reduction}. By applying \ref{presilt_in_thick} to the category $p_*(\HH') \subset \K^{[-1, 0]}(\proj H^0(C_U))$, we can find a $2$-term presilting object $0 \neq V' \in p_*(\HH')$, and thus there exists $V \in \HH'$ such that $V'=p_*(V)$ which is itself presilting by \cref{ind_silting}. Moreover, $V \in \rho(\HH) \simeq \bar{\rho}(\HH \cap \Z_\U)$, thus by \cref{bij_silting_silting_iyama} we know that $V= W \oplus U \in \HH\cap \Z_\U \subset \HH$, where $W \neq 0$ since we supposed $V'$ and thus $V$ to be non-zero in $\J_U$. By substituting $U$ by $V$ in the previous argument, we can find a sequence of $2$-term presilting complexes $(V_i)_{i \in \mathbb{N}}$ such that $\add(V_i) \subsetneq \add(V_{i+1})$. Since the number of indecomposables of a presilting $2$-term complex is bounded by $|\Lambda|$, the sequence must stabilize, that is for $i \gg 0$ $$\add(V_i) = \add(V_{i+1}) = \add(\bar{U}),$$ where $\bar{U} \in \Kproj$ is a basic presilting complex. Then $\HH = \thi_{[-1,0]}(\bar{U})$, which gives the result. 
\end{proof}

\begin{corollary}\label{finitethick}
	Let $\Lambda$ be a finite-dimensional algebra, then $\Lambda$ is $g$-finite if and only if there exist finitely many thick subcategories in $\Kproj$.
\end{corollary}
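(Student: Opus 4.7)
The plan is to prove both implications using results already at our disposal in the paper. The forward direction will leverage \cref{gfinite-thick} together with the $g$-finiteness hypothesis, while the backward direction will follow directly from the bijection between $2$-term silting objects and thick subcategories with enough injectives established in \cref{main_coro}.

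For the forward direction, suppose $\Lambda$ is $g$-finite. By \cref{gfinite-thick}, every thick subcategory $\HH \subset \Kproj$ is of the form $\thi_{[-1,0]}(U)$ for some $2$-term presilting complex $U$. Therefore the map sending a basic $2$-term presilting complex $U$ to $\thi_{[-1,0]}(U)$ is surjective onto the set of thick subcategories of $\Kproj$. Thus it suffices to show that there are only finitely many basic $2$-term presilting complexes up to isomorphism. This is a standard consequence of $g$-finiteness: by Bongartz completion (\cref{Bconp_silting}), any basic $2$-term presilting complex is a direct summand of a basic $2$-term silting complex, and since $\Lambda$ has only finitely many basic $2$-term silting complexes by definition of $g$-finiteness, the number of basic $2$-term presilting complexes is also finite. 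This yields the finiteness of thick subcategories.

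For the backward direction, suppose that $\Kproj$ admits only finitely many thick subcategories. Then in particular, the subset $\fthi \Lambda$ of thick subcategories with enough injectives is finite. By \cref{main_coro}, the assignment $U \mapsto \thi_{[-1,0]}(U_\rho)$ defines a bijection between the set $\tsilt \Lambda$ of isomorphism classes of basic $2$-term silting objects and $\fthi \Lambda$. Hence $\tsilt \Lambda$ is finite, which is precisely the statement that $\Lambda$ is $g$-finite.

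I do not expect any substantive obstacle in either direction, as both implications are essentially immediate consequences of previously established results. The entire argumentative weight of the corollary rests on \cref{gfinite-thick}, whose proof constitutes the genuinely difficult content (relying on the degeneration argument from \cref{degeneration} and the silting reduction machinery from \cref{sec_reduction}). The only point that warrants care is verifying that the map $U \mapsto \thi_{[-1,0]}(U)$ from basic $2$-term presilting complexes to thick subcategories sends a finite set to a finite set, which is immediate since surjective images of finite sets are finite.
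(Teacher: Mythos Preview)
Your proof is correct and follows essentially the same route as the paper's own proof: both directions invoke \cref{gfinite-thick} for the forward implication and the bijection of \cref{main_coro} for the converse. The only difference is that you spell out explicitly, via Bongartz completion, why there are finitely many basic $2$-term presilting complexes, whereas the paper leaves this step implicit.
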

\begin{proof} 
	Suppose $\Lambda$ is $g$-finite. By \cref{gfinite-thick}, any thick subcategory is generated by a presilting complex $U \in \Kproj$, and since there are finitely many isomorphism classes of such $U$, then there exist finitely many thick subcategories. Conversely, if there are finitely many thick subcategories, then there are also finitely many of them with enough injectives. Given that \cref{main_coro} establishes a bijection between isomorphism classes $2$-term silting complexes and thick subcategories with enough injectives, we conclude that $\Lambda$ is $g$-finite.
	\smallskip
\end{proof}

\begin{example}
	Consider $\Lambda$ to be the path algebra the Kronecker quiver \[Q = \begin{tikzcd} 1 \arrow[r,shift left=.75ex,"\alpha"] \arrow[r,shift right=.75ex,"\beta", swap] & 2 \end{tikzcd}. \]
	\cref{gfinite-thick} provides yet another illustration of the fact that $\Lambda$ is $g$-infinite. Indeed, the tubes in $\Kproj$ provide a family of thick subcategories which do not contain a non-zero presilting object and that have no non-zero object whose $g$-vector coincides with that of a presilting object of $\Kproj$.
\end{example}

Before moving on to prove \cref{allthick}, we first recall the definition of the map $\Ww$ introduced in \cref{presilt_in_thick}. 
\begin{definition}
	Let $\C \subset \Mod \Lambda$ and $\HH \subset \Kproj$ be subcategories of $\Mod \Lambda$ and $\Kproj$ respectively. We define $\Tt(\C)$ to be the full subcategory of $\Kproj$ whose objects are all complexes $X = \complex{X^{-1}}{x}{X^0}$ such that the $\Bbbk$-linear map $$\Hom_\Lambda(x, M) : \Hom_\Lambda(X^0, M) \rightarrow \Hom_\Lambda(X^{-1}, M)$$ is an isomorphism. Similarly, we define $\Ww(\HH)$ as the full subcategory of modules $M$ such that $\Hom_\Lambda(x, M)$ is an isomorphism for all complexes $X = \complex{X^{-1}}{x}{X^0} \in \HH$. 
\end{definition}

\begin{proposition}\label{W_T_perp}
	Let $\C \subset \Mod \Lambda$ and $\HH \subset \Kproj$ be subcategories, then
	 \begin{gather*}
	 	\Ww(\HH) = \HH^{\perp_\mathbb{Z}} \cap \Mod \Lambda \\
	 	\Tt(\C)  = \prescript{\perp_\mathbb{Z}}{}{\C} \cap \Kproj
	 \end{gather*}
where \begin{gather*}
	\HH^{\perp_\mathbb{Z}} = \{Y \in \D \ | \ \Hom_\D(\HH, Y[i]) = 0 \ \forall \ i \in \mathbb{Z}\} \\
	\prescript{\perp_\mathbb{Z}}{}{\C} = \{Y \in \D \ | \ \Hom_\D(Y[i], \C) = 0 \ \forall \ i \in \mathbb{Z}\}
\end{gather*} and $\D = \D^b(\Mod \Lambda)$. 
\end{proposition}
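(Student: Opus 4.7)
The key computation underlying both equalities is to compare, for $X = \complex{X^{-1}}{x}{X^0} \in \Kproj$ and $M \in \Mod \Lambda$, the map $\Hom_\Lambda(x, M)$ with the $\Hom$ groups $\Hom_\D(X, M[i])$ for $i \in \mathbb{Z}$. The plan is to first establish that the isomorphism condition defining $\Ww$ and $\Tt$ is equivalent to the vanishing of $\Hom_\D(X, M[i])$ for all $i \in \mathbb{Z}$, and then deduce both statements formally.

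First, I would fix $X \in \Kproj$ and $M \in \Mod \Lambda$ (viewed as a complex concentrated in degree $0$). Since $X$ is a bounded complex of projectives, $\Hom_\D(X, M[i]) = \Hom_{\K^b(\proj \Lambda)}(X, M[i])$, which can be computed directly from chain maps modulo homotopy. Because $X$ is supported in degrees $\{-1, 0\}$ and $M[i]$ is supported in degree $-i$, the group $\Hom_\D(X, M[i])$ vanishes automatically for $i \notin \{0, 1\}$. For $i = 0$, an explicit calculation shows that $\Hom_\D(X, M) = \ker \Hom_\Lambda(x, M)$, and for $i = 1$, that $\Hom_\D(X, M[1]) = \Coker \Hom_\Lambda(x, M)$. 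Consequently, $\Hom_\Lambda(x, M)$ is an isomorphism if and only if $\Hom_\D(X, M[i]) = 0$ for every $i \in \mathbb{Z}$.

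From this equivalence, both identities follow directly. For the first, $M \in \Ww(\HH)$ means $\Hom_\Lambda(x, M)$ is an isomorphism for every $X \in \HH$, which by the above is equivalent to $\Hom_\D(X, M[i]) = 0$ for all $X \in \HH$ and all $i \in \mathbb{Z}$; since $M$ lies in $\Mod \Lambda$, this says precisely $M \in \HH^{\perp_\mathbb{Z}} \cap \Mod \Lambda$. For the second, note that $\Hom_\D(X[i], M) = \Hom_\D(X, M[-i])$, so for $X \in \Kproj$, membership in $\prescript{\perp_\mathbb{Z}}{}{\C}$ is equivalent to the vanishing of $\Hom_\D(X, M[j])$ for every $j \in \mathbb{Z}$ and every $M \in \C$, which by the key computation is precisely the condition defining $\Tt(\C)$.

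The only real work is the degree-by-degree computation in step one, which is routine. The main potential source of error is bookkeeping: getting the direction and sign of the differential $x$ right so that $\ker$ and $\Coker$ of $\Hom_\Lambda(x, M)$ are correctly identified with $\Hom_\D(X, M)$ and $\Hom_\D(X, M[1])$, respectively. Once this identification is in place, the rest of the argument is a direct unpacking of definitions.
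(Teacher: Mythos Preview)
Your proposal is correct and follows essentially the same approach as the paper: both reduce to showing that $\Hom_\D(X, M)$ and $\Hom_\D(X, M[1])$ are the kernel and cokernel of $\Hom_\Lambda(x, M)$, after observing that $\Hom_\D(X, M[i])$ vanishes for $i \neq 0,1$. The only minor difference is that you carry out this identification by a direct chain-level computation, whereas the paper reads it off from the long exact sequence obtained by applying $\Hom_\D(-, M)$ to the triangle $X[-1] \dashrightarrow X^{-1} \xrightarrow{x} X^0 \to X$.
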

\begin{proof}
Let $M \in \Mod \Lambda$ and $X = \begin{tikzcd}[cramped, sep=small]X^{-1} \arrow[d, "f"] \\ X^0
\end{tikzcd} \in \Kproj$. Then $M \in \Ww(X)$ (or equivalently $X \in \Tt(M)$) if and only if the $\Bbbk$-linear map
\begin{equation*}\label{inducedmorf} \Hom_\Lambda(X^0, M) \xlongrightarrow{\Hom_\Lambda(f, M)} \Hom_\Lambda(X^{-1},M) \end{equation*}
is an isomorphism. By definition, $X \in \prescript{\perp}{}M[i]$ (or equivalently $M \in X[-i]^{\perp}$) for any $i \neq 0, 1$, so the only thing left to  prove is the case when $i = 0 $ or $1$. Consider the triangle \[ X[-1]  \dashrightarrow X^{-1} \xrightarrow{f} X^0 \rightarrow X . \] 
Then by applying $\Hom_{\D}(-, M)$ we get an exact sequence 
\begin{equation*}
	\begin{tikzcd} \Hom_{\D}(X, M) \arrow[r] & \Hom_{\D}(X^0, M) \arrow[r , "{\Hom_{\D}(f,M)}"] \arrow[d, equal] & [3em]\Hom_{\D}(X^{-1}, M) \arrow[r] \arrow[d, equal] & \Hom_{\D}(X[-1], M) \\	& \Hom_{\Lambda}(X^0, M) \arrow[r, "{\Hom_\Lambda(f,M)}"] & \Hom_{\Lambda}(X^{-1}, M) & \end{tikzcd}
\end{equation*}
This implies that $\Hom_{\Lambda}(f,M)$ is an isomorphism if and only if $ \Hom_{\D}(X[-1], M) \simeq 0 \simeq \Hom_{\D}(X, M)$, which is equivalent to $M \in X^{\perp_{\mathbb{Z}}}$ (and $X \in \prescript{\perp_{\mathbb{Z}}}{}M$).
\end{proof}

\begin{remark}
	The wide subcategories $\Ww(\HH)$ were already considered in work of L.~Angeleri~H\"{u}gel, F.~Marks and J.~Vit\'{o}ria \cite{angeleri2016silting, angeleri2016ring}. In their work, they are defined with respect to a morphism between two (not necessarily finite-dimensional) projective $\Lambda$-modules, and are key to their generalization of large tilting modules and support $\tau$-tilting modules. 
\end{remark}

\begin{lemma}\label{perpsimple}
	Let $\Gamma$ be a non-positive dg algebra over $\Bbbk$ such that $H^0(\Gamma)$ is finite-dimensional. Let $S$ be a simple module in the heart of the standard t-structure $\{X \in \D(\Gamma)\ | \ H^i(X) = 0 \ \text{for } i \neq 0 \} \subset \D(\Gamma)$. Then $X \in \per(\Gamma)$ belongs to $\prescript{\perp_\mathbb{Z}}{}{S}$ if and only if $X \in \thi_{\per \Gamma}(\add(\Gamma) \setminus \add(P))$ where $P$ is the direct summand of $\Gamma$ satisfying that $\Hom_{\D(\Gamma)}(P, S[i])$ is a division algebra if $i = 0$ and is $0$ otherwise. 
\end{lemma}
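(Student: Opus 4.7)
For the inclusion $\thi_{\per\Gamma}(\add(\Gamma)\setminus\add(P)) \subseteq \prescript{\perp_\mathbb{Z}}{}{S}\cap\per(\Gamma)$, I would first observe that $\prescript{\perp_\mathbb{Z}}{}{S}$ is a thick subcategory of $\per(\Gamma)$, being the intersection of the kernels of the cohomological functors $\Hom_{\D(\Gamma)}(-, S[i])$ for $i \in \mathbb{Z}$. It therefore suffices to show that every indecomposable summand $Q$ of $\Gamma$ with $Q\not\simeq P$ lies in $\prescript{\perp_\mathbb{Z}}{}{S}$. Writing $Q = e_Q\Gamma$ for an appropriate idempotent $e_Q$, the standard formula $\Hom_{\D(\Gamma)}(e_Q\Gamma, M[i]) \simeq H^i(Me_Q)$ gives $\Hom_{\D(\Gamma)}(Q, S[i]) \simeq H^i(Se_Q)$. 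This vanishes for $i\neq 0$ because $S$ lies in the heart of the standard t-structure and is therefore concentrated in cohomological degree zero, and it vanishes for $i=0$ because the hypothesis characterizing $P$ forces $Se_Q = 0$ whenever $Q\not\simeq P$.

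For the reverse inclusion, my plan is to exploit a minimal semi-free model. Because $\Gamma$ is non-positive and $H^0(\Gamma)$ is finite-dimensional, every $X \in \per(\Gamma)$ admits a finite minimal semi-free resolution $F \xrightarrow{\sim} X$ whose underlying graded $\Gamma$-module decomposes as $F = \bigoplus_{k\in\mathbb{Z}} Q_k[-k]$ with each $Q_k \in \add(\Gamma)$ a finite direct sum, and whose twisting differential takes values in $F\cdot\Gamma^{<0}$ (a Kadeishvili-type minimality condition). Since $\Gamma^{<0}$ acts trivially on the simple module $S$ (as $S$ is concentrated in degree zero over the non-positive $\Gamma$), the induced differential on $\Hom^\bullet_\Gamma(F, S)$ vanishes, and so
\[ \Hom_{\D(\Gamma)}(X, S[k]) \;\simeq\; H^k\bigl(\Hom^\bullet_\Gamma(F, S)\bigr) \;\simeq\; \Hom_\Gamma(Q_k, S), \]
where the rightmost term, by the computation of the forward direction applied summand-wise to $Q_k$, counts (up to the division algebra $\En(S)$) the multiplicity of $P$-summands in $Q_k$. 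Under the assumption $\Hom(X, S[k])=0$ for every $k$, no $P$-summand appears in any $Q_k$, so $F$ is semi-free over $\add(\Gamma)\setminus\add(P)$ and therefore $X\simeq F$ lies in $\thi_{\per\Gamma}(\add(\Gamma)\setminus\add(P))$.

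The main obstacle I anticipate is providing a clean justification of the minimal semi-free model together with the vanishing of the induced differential on $\Hom^\bullet_\Gamma(F, S)$. This is folklore for non-positive dg algebras with finite-dimensional $H^0$, but will need a careful reference (for instance extracted from the silting-reduction machinery in \cite{koening2014silting}, which is already invoked in the preliminaries via \cref{kellermorita}). A self-contained alternative would proceed by induction on the number of generating shifts $\Gamma[k]$ appearing in any (not necessarily minimal) semi-free presentation of $X$: at each extremal layer $k_0$, the map $F\to X$ induces an isomorphism on $\Hom(-,S[k_0])$, so if this group vanishes the $P$-part of the extremal $Q_{k_0}$ splits off and can be removed, reducing the presentation length and allowing induction.
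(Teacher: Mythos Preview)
Your proposal is correct and follows essentially the same route as the paper. In both cases one replaces $X$ by a minimal finite semi-free (equivalently, twisted-complex) model built from shifts of summands of $\Gamma$, observes that minimality kills the induced differential on $\cHom^\bullet(-,S)$, and reads off the $P$-multiplicities from $\Hom(X,S[k])$. The paper does this in one line by citing \cite[Lemma~2.14]{plamondon2011cluster} for the existence of the minimal twisted-complex model with strictly upper-triangular differential, and then concludes directly that no $Q_i$ can be a shift of $P$; that reference is exactly the ``folklore'' input you were looking for, so you need not fall back on the inductive layer-stripping alternative.

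One small correction to your write-up: the minimality condition is not that the twisting differential lands in $F\cdot\Gamma^{<0}$, but rather in $F\cdot\bigl(\Gamma^{<0}\oplus\rad\Gamma^{0}\bigr)$. Indeed, the components $Q_k[-k]\to Q_{k+1}[-(k+1)]$ of the twist are degree-$0$ maps of $\Gamma$-modules and generally cannot be forced into $\Gamma^{<0}$; minimality only says they lie in the radical. This does not affect your argument, since a simple $H^0(\Gamma)$-module is annihilated by $\rad H^0(\Gamma)$ as well, so the induced differential on $\cHom^\bullet_\Gamma(F,S)$ still vanishes.
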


\begin{proof}
We are going to prove that if $ X \in \prescript{\perp_\mathbb{Z}}{}{S} \cap \per\Gamma$ then $X \in \thi_{\per \Gamma}(\add(\Gamma) \setminus \add(P))$. The other direction is straightforward. Let $X \in \per \Gamma$, then by \cite[Lemma 2.14]{plamondon2011cluster} $X$ is quasi-isomorphic to a \textit{twisted complex} in $\D(\Gamma)$, that is, its underlying graded module is of the form 
\[\bigoplus_{i = 1}^l Q_i\]
where $Q_i \in \add(\Gamma)[n_i]$ for some $n_i \in \mathbb{Z}$ and its differential is given by
\[\begin{pmatrix}
	d_1 & f_{12} & \cdots & f_{1l} \\
	0 & d_2 & \cdots & f_{2l} \\
	\vdots & \vdots & \ddots & \vdots \\
	0 & 0 & \cdots & d_{l}
\end{pmatrix}\]
where $d_i$ is the differential corresponding to $Q_i$. Recall that for any $Q \in \add(\Gamma)[n]$, then $\Hom(Q,S[i]) = 0$ if and only if $Q \notin \add(P)$ or $i \neq n$. Since $\Hom(X, S[i]) = 0$ for all $i \in \mathbb{Z}$, we conclude that none of the $Q_l$ are shifts of $P$. In particular, $X \in \thi_{\per \Gamma}(\add(\Gamma) \setminus \add(P))$.
\end{proof}

\begin{proposition}\label{WwTt}
	Let 
	\begin{gather*}
		\Ww : \thi \Lambda \rightarrow \wide \Lambda \\ \Tt : \wide \Lambda \rightarrow \thi \Lambda
	\end{gather*}
	be the maps defined in \cref{W_T_perp}. Then $\Ww$ and $\Tt$ induce mutually inverse bijections between the set of left finite wide subcategories in $\Mod \Lambda$ and thick subcategories with enough injectives in $\Kproj$. 
\end{proposition}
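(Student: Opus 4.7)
The plan is to combine the Galois connection formed by $\Ww$ and $\Tt$ with the bijection $\Ww|_{\fthi \Lambda} : \fthi \Lambda \to \fwide \Lambda$ from \cref{theo-thick-wide} and the silting reduction developed in \cref{sec_reduction}. From the explicit descriptions in \cref{W_T_perp} one reads off that for any full additive subcategories $\HH \subseteq \Kproj$ and $\C \subseteq \Mod \Lambda$,
\[
\HH \subseteq \Tt(\C)
\iff \Hom_{\D^b(\Mod \Lambda)}(\HH[i], \C) = 0\ \forall\, i \in \ZZ
\iff \C \subseteq \Ww(\HH),
\]
so $(\Ww, \Tt)$ is a Galois connection and both compositions $\Ww \circ \Tt$ and $\Tt \circ \Ww$ are closure operators. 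It therefore suffices to prove that $\Tt \circ \Ww = \mathrm{Id}$ on $\fthi \Lambda$; the bijectivity of $\Ww|_{\fthi \Lambda}$ will then automatically force $\Tt(\C) \in \fthi \Lambda$ and $\Ww \circ \Tt = \mathrm{Id}$ on $\fwide \Lambda$.

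Fix $\HH \in \fthi \Lambda$ and use \cref{main_coro} to write $\HH = \thi_{[-1,0]}(U_\rho)$ for the appropriate summand of a $2$-term silting $U \in \tsilt \Lambda$. The Galois connection already gives $\HH \subseteq \Tt(\Ww(\HH))$; the content is the reverse inclusion. I would push the problem through the silting reduction $\rho : \Kb \to \Kb/\thi_b(U_\rho) \simeq \per(C_{U_\rho})$ of \cref{iyama_theo_reduction} and then through the induction functor $p_* : \per(C_{U_\rho}) \to \per(H^0(C_{U_\rho}))$ of \cref{indc_funct}. By \cref{explicitthick} and \cref{thekey1}, membership $X \in \HH$ is equivalent to $p_* \rho(X) = 0$ in $\K^{[-1,0]}(\proj H^0(C_{U_\rho}))$.

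The crucial identification, furnished by \cref{jasoo_red_algebra} and Jasso's $\tau$-tilting reduction \cite{jasso2015reduction}, is that the wide subcategory $\C = \Ww(\HH)$ is equivalent to $\Mod H^0(C_{U_\rho})$, with the simple objects of $\C$ matching the simple $H^0(C_{U_\rho})$-modules $S_1, \dots, S_r$. For $X \in \Tt(\Ww(\HH))$ the vanishing $\Hom_{\D^b(\Mod \Lambda)}(X[i], S) = 0$ for all $i \in \ZZ$ and every simple $S \in \C$ transports to $p_* \rho(X) \in \prescript{\perp_\ZZ}{}{S_j}$ for each $j$; applying \cref{perpsimple} to every $S_j$ and intersecting the resulting thick subcategories of $\per(H^0(C_{U_\rho}))$ leaves only $p_*\rho(X) = 0$. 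Hence $X \in \HH$, so $\Tt(\Ww(\HH)) \subseteq \HH$ and equality holds.

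The main obstacle I anticipate is the transport step above: carefully carrying the $\ZZ$-graded Hom-vanishing across both the Verdier quotient $\rho$ and the induction functor $p_*$, and precisely matching the simple objects of $\C$ viewed inside $\Mod \Lambda$ with the simples of $H^0(C_{U_\rho})$ furnished by Jasso's reduction. Everything else in the argument is formal: once $\Tt \circ \Ww = \mathrm{Id}$ on $\fthi \Lambda$ is established, $\Tt|_{\fwide \Lambda}$ being the inverse of $\Ww|_{\fthi \Lambda}$ follows from the bijectivity of the latter.
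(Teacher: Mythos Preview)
Your strategy diverges from the paper's. The paper does not route through the silting reduction $\rho:\Kb\to\Kb/\thi_b(U)$ at all; instead it takes the Bongartz completion $T_U$ of the injective generator $U$ of $\HH$, invokes the Koenig--Yang/Keller equivalence $\phi:\D_{fd}(\Gamma_U)\xrightarrow{\simeq}\D^b(\Mod\Lambda)$ sending $\Gamma_U$ to $T_U$ (\cref{silt_smc}), and identifies the simples of $\Ww(\HH)$ with a subset $\Ss_U$ of the associated simple-minded collection (via Asai's description $\Ww(U)=\Filt(\Ss_U)$). Then $\Tt(\Ww(\HH))={}^{\perp_\ZZ}\Ss_U\cap\Kproj=\phi\bigl({}^{\perp_\ZZ}\overline{\Ss_U}\bigr)\cap\Kproj$, and \cref{perpsimple} applied \emph{inside} $\per(\Gamma_U)$ yields ${}^{\perp_\ZZ}\overline{\Ss_U}=\thi_{\per(\Gamma_U)}(\bar U)$, whence $\Tt(\Ww(\HH))=\thi_b(U)\cap\Kproj=\HH$ by \cref{explicitthick}. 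Your Galois-connection framing is correct and matches the paper's reduction to the single inclusion $\Tt(\Ww(\HH))\subseteq\HH$.

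The gap in your argument is precisely the transport step you flag, and it is not closed by the results you cite. The Verdier quotient $\rho$ is defined on $\Kb$, whereas the simple objects $S$ of $\Ww(\HH)$ generally lie in $\D^b(\Mod\Lambda)\setminus\Kb$, so $\rho(S)$ is not even available; and \cref{thekey1} only compares $\EE$-groups between objects of $\pero(C_{U_\rho})$, saying nothing about Hom's from $p_*\rho(X)$ into simple $H^0(C_{U_\rho})$-modules inside $\D^b(\Mod H^0(C_{U_\rho}))$. To make your route work you would need a compatible reduction at the level of $\D_{fd}$ (as in the full Iyama--Yang framework, not just \cref{iyama_theo_reduction} on $\Kb$) together with a verification that Jasso's abelian equivalence $\Ww(\HH)\simeq\Mod H^0(C_{U_\rho})$ intertwines the relevant derived Hom's with those computed from $p_*\rho(X)$. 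The paper sidesteps all of this because $\phi$ is already a triangle equivalence on the whole of $\D_{fd}$, so the matching of simples and the preservation of ${}^{\perp_\ZZ}$ come for free.
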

\begin{proof}
	Let $\HH= \thi_{[-1,0]}(U)$ where $U$ a basic injective generator of $\HH$. Then by \cref{W_T_perp} we know that $\Ww(\HH) = \Ww(U) = U^{\perp_\mathbb{Z}} \cap \Mod \Lambda$. By \cref{theo-thick-wide}, we know that $\Ww(\HH) = \W_{(M,P)}$ where $(M,P)$ is the support $\tau$-tilting pair associated to the presilting complex $U$. The only thing we need to prove is that $\Tt(\W_{(M,P)}) \subset \thi_{[-1,0]}(U)$, since the other inclusion is always satisfied.
	
	Recall that since $U$ is a $2$-term presilting complex, it can be completed into a $2$-term silting complex. Let $T_U$ the Bongartz completion of $U$. Since $U$ is injective in $\HH = \thi_{[-1,0]}(U)$, then $T_U = U \oplus V$ where $V$ is the Bongartz complement of $U$ satisfying that $\add(V) \cap \add(U)$ \cite[Lemma 3.11]{garcia2023thick}. Let $\Ss \subset \D^{[-1,0]}(\Mod \Lambda)$ be the simple-minded collection associated to $T_U$, then $\Ss = \Ss_U \sqcup \Ss_V$ where $\Ss_U = U^{\perp_\mathbb{Z}} \cap \Ss$ and $\Ss_V = V^{\perp_\mathbb{Z}} \cap \Ss$. By \cite[Theorem 2.3]{asai2020semibricks}, we know that $\Ss_U \subset \Mod \Lambda$ and that $\Ww(U) = \W_{(M,P)} = \Filt(\Ss_U)$. Thus $\Tt(\Ww(\HH)) = \prescript{\perp_\mathbb{Z}}{}{\Ss_U} \cap \Kproj$.
	Recall that there exists a non-positive dg algebra $\Gamma_U$ and triangulated equivalences
	\begin{center}
		\begin{tikzcd}
			\D_{fd}(\Gamma_U) \arrow[r, "\phi"', "\simeq"] & \D^b(\Mod \Lambda) \\
			\per(\Gamma_U) \arrow[u, hook] \arrow[r, "\bar{\phi}"', "\simeq"] & \Kb \arrow[u, hook]	
		\end{tikzcd}
	\end{center}
	which take $\Gamma_U$ to $T_U$ and all simple $H^0(\Gamma_U)$-modules to the simple-minded collection $\Ss$ (\cref{silt_smc}). Let $\bar{U} \in \add(\Gamma_U)$ be the perfect complex sent to $U$ and $\overline{\Ss_U}$ the set of simple $H^0(\Gamma_U)$-modules that are perpendicular to $\bar{U}$. Then $\phi(\overline{\Ss_U}) = \Ss_U$ and thus \[ \begin{split}
		\Tt(\Ww(\HH)) & = \prescript{\perp_\mathbb{Z}}{}{\Ss_U} \cap \Kproj \\ & =\prescript{\perp_\mathbb{Z}}{}{\phi\left(\overline{\Ss_U}\right)} \cap \Kproj \\ &= \phi\left(\prescript{\perp_\mathbb{Z}}{}{\overline{\Ss_U}} \right) \cap \Kproj
	\end{split}\ \] 
By \cref{perpsimple}, we know that $\prescript{\perp_\mathbb{Z}}{}{\overline{\Ss_U}} = \thi_{\per(\Gamma_U)}(\bar{U})$. Since $\phi$ is a triangle equivalence $\phi(\thi_{\per(\Gamma_U)}(\bar{U})) = \thi_{b}(U)$. This implies that $\Tt(\Ww(\HH)) = \thi_{b}(U) \cap \Kproj$. By applying \cref{explicitthick} we get that \[ \Tt(\Ww(\HH)) = \thi_{[-1,0]}(U) = \HH. \]
\end{proof}

\begin{theorem}\label{allthick}
	Suppose $\Lambda$ is $g$-finite. Then all thick subcategories of $\Kproj$ have enough injectives. 
\end{theorem}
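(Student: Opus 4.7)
The plan is to show that for any thick subcategory $\HH \subset \Kproj$, we have $\HH = \Tt(\Ww(\HH))$, and then invoke \cref{WwTt} to conclude that $\HH$ has enough injectives. First, by \cref{gfinite-thick}, one writes $\HH = \thi_{[-1,0]}(U)$ for some presilting complex $U$. The wide subcategory $\Ww(\HH)$ is left finite by \cref{allwideareleft}, so by \cref{WwTt} the subcategory $\HH' := \Tt(\Ww(\HH))$ lies in $\fthi \Lambda$. From the perpendicular descriptions in \cref{W_T_perp}, the inclusion $\HH \subset \HH'$ is automatic.

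The main task is to prove the reverse inclusion $\HH' \subset \HH$. By \cref{theo-thick-wide} and \cref{main_coro}, $\HH'$ is generated by a basic injective object of the form $T_\rho$ for a unique silting $T \in \tsilt \Lambda$, so it suffices to show $T_\rho \in \HH$. I would approach this by revisiting the iterative construction from the proof of \cref{gfinite-thick}: starting from $U \in \HH$, one iteratively applies silting reduction by the current presilting (using \cref{presilt_in_thick} in the reduced category $\pero(C_U)$, which is $g$-finite by Jasso's reduction \cite{jasso2015reduction}) to build an ascending chain of presiltings inside $\HH$ that stabilizes at a maximal $\bar U$ with $\HH = \thi_{[-1,0]}(\bar U)$. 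The crux is to establish $\bar U = T_\rho$, which would force $T_\rho \in \HH$ and close the argument.

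To justify $\bar U = T_\rho$, I would track how the summands added in each iteration correspond to the simple objects (a semibrick in the sense of \cite{asai2020semibricks}) of the wide subcategory $\Ww(\HH)$, via the silting reduction of \cref{sec_reduction} and the bijection between silting objects and support $\tau$-tilting pairs. Since $\Ww(\HH) = \Ww(\HH')$ and the composite bijection $\wide \Lambda \to \tsilt \Lambda$ in \cref{main_coro} determines $T_\rho$ from $\Ww(\HH')$, the iteration inside $\HH$ is forced to produce $T_\rho$. The main obstacle is making this last claim precise: it requires showing that the summand extracted by \cref{presilt_in_thick} at each reduction step is determined by the semibrick data of $\Ww(\HH)$ alone, independently of whether one works inside $\HH$ or $\HH'$, and that this matching is compatible with the commutative diagram in \cref{theo-thick-wide}.
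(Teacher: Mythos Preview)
Your setup coincides with the paper's: write $\HH=\thi_{[-1,0]}(U)$ via \cref{gfinite-thick}, observe $\Ww(\HH)$ is left finite, set $\HH'=\Tt(\Ww(\HH))=\thi_{[-1,0]}(V)$ with $V$ an injective generator, and note $\HH\subset\HH'$. The divergence is in how you close the inclusion. Your plan to prove $\bar U = T_\rho$ by tracking semibricks through the iterative reduction is not on solid ground: the construction in \cref{gfinite-thick} is highly non-canonical (at each step \cref{presilt_in_thick} picks a presilting with prescribed $g$-vector depending on an arbitrary $X$, not on $\Ww(\HH)$), and in general a thick subcategory has many non-isomorphic presilting generators (already $\HH=\Kproj$ is generated by $\Lambda$ and by $\Lambda[1]$). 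So there is no reason the iteration should land on the injective one, and the proposed ``semibrick determines the summand'' mechanism is not something that \cref{presilt_in_thick} provides.

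The paper avoids this entirely with a short rank argument. Since $U\in\thi_{[-1,0]}(V)=\CCone(\add V,\add V)$, one gets a conflation $U\rightarrowtail V'\twoheadrightarrow V''$ with $V',V''\in\add(V)$, exhibiting $U$ as a presilting object in $V[-1]*V\subset\thi_b(V)$. Now both $U$ and $V$ cut out the same $\tau$-perpendicular wide subcategory $\Ww(\HH)$, and the number of simples there is $|\Lambda|-|U|=|\Lambda|-|V|$; hence $|U|=|V|$. By \cref{maximal_silt} (applied inside $\thi_b(V)$ with silting object $V[-1]$), $U$ is already silting in $\thi_b(V)$, so $\thi_b(U)=\thi_b(V)$, and \cref{thickisthick} gives $\HH=\thi_{[-1,0]}(U)=\thi_{[-1,0]}(V)=\HH'$. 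This replaces your unproven identification $\bar U=T_\rho$ with the much weaker (and true) equality $\thi_b(U)=\thi_b(V)$, obtained by counting summands.
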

\begin{proof}
	 Let $\HH$ be a thick subcategory of $\Kproj$, by \cref{gfinite-thick} we know that there exists a $2$-term presilting complex $U$ such that $\HH = \thi_{[-1,0]}(U)$. Since $\Lambda$ is $g$-finite, then $\Ww(\HH)$ is left finite and by \cref{WwTt} $\Tt(\Ww(\HH))$ has enough injectives. Let $V$ be an injective generator of $\Tt(\Ww(\HH))$, then
	 \[\thi_{[-1,0]}(U) \subset \Tt(\Ww(\thi_{[-1,0]}(U))) = \thi_{[-1,0]}(V).\] 
	 Since $U \in \thi_{[-1,0]}(V) = \CCone(V,V)$ \cite[Lemma 3.9]{garcia2023thick}, there exists a conflation $U \rightarrowtail V' \twoheadrightarrow V''$ with $V', V'' \in \add(V)$ and thus $U$ is $2$-term presilting in $\thi_b(V)$ with respect to $V[-1]$. By \cref{Bconp_silting}, we know that $U$ can be completed into a silting complex in $\thi_b(V)$. Since $U$ and $V$ give rise to the same $\tau$-perpendicular category $\Ww(\HH)$, we deduce that $|U| = |V| = |V[-1]|$, which in turn implies by \cref{maximal_silt} that $U$ is already silting in $\thi_b(V)$ and thus $\thi_b(V) = \thi_b(U)$. By \cref{thickisthick} this implies that $$\HH = \thi_{[-1,0]}(U) = \thi_{[-1,0]}(V).$$
\end{proof}

\begin{remark}
	The arguments used both in \cref{WwTt} and \cref{allthick} can be adapted to show that the results hold if we substitute left finite for \textit{right finite} wide subcategories \cite[Definition 1.2]{asai2020semibricks}. That is, $\Ww$ and $\Tt$ are inverse of each other if restricted to the set of right-finite wide subcategories and thick subcategories with enough projectives. Moreover, if $\Lambda$ is $g$-finite, the dual statement of \cref{allthick} also holds.
\end{remark}

\begin{theorem}\label{allthickproj}
	Suppose $\Lambda$ is $g$-finite. Then all thick subcategories of $\Kproj$ have enough projectives. 
\end{theorem}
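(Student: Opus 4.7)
The plan is to mirror the proof of \cref{allthick} by systematically dualising each ingredient, exactly as indicated in the remark preceding the statement. Let $\HH$ be a thick subcategory of $\Kproj$. Since $\Lambda$ is $g$-finite, \cref{gfinite-thick} supplies a $2$-term presilting complex $U$ with $\HH = \thi_{[-1,0]}(U)$, so the task reduces to producing a projective generator of $\HH$.

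The first step is to record the dual versions of the relevant earlier results. By the dual of \cref{allwideareleft} (which follows from \cref{demonet_finite} applied to the opposite algebra, equivalently from the fact that $g$-finiteness is a self-dual property), every wide subcategory of $\Mod \Lambda$ is right finite. The remark after \cref{allthick} indicates that $\Ww$ and $\Tt$ restrict to mutually inverse bijections between right finite wide subcategories and thick subcategories of $\Kproj$ with enough projectives; one proves this exactly as in \cref{WwTt}, but using the Bongartz \emph{co-}completion $T^c_U = U \oplus V$, the right finite semibricks of \cite[Theorem 2.3]{asai2020semibricks}, and the obvious analog of \cref{perpsimple} in which $\add(P)$ is replaced by the summand corresponding to injective simples in the appropriate t-structure.

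With these in place, the proof proceeds as follows. Apply $\Ww$ to $\HH$; since $\Lambda$ is $g$-finite, $\Ww(\HH)$ is a right finite wide subcategory, and by the dual of \cref{WwTt} the thick subcategory $\Tt(\Ww(\HH))$ has enough projectives, with a basic projective generator $V$. As in the proof of \cref{allthick} we have the inclusion
\[
\thi_{[-1,0]}(U) \subset \Tt(\Ww(\thi_{[-1,0]}(U))) = \thi_{[-1,0]}(V).
\]
Because $U \in \thi_{[-1,0]}(V) = \Cone(V,V)$, there is a conflation $V' \rightarrowtail V'' \twoheadrightarrow U$ with $V', V'' \in \add(V)$, which exhibits $U$ as a $2$-term object with respect to $V$ inside $\thi_b(V)$.

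The main step — the analog of the argument in \cref{allthick} — is now to show that $\thi_b(U) = \thi_b(V)$. By \cref{Bconp_silting} applied inside the triangulated category $\thi_b(V)$ (which has $V$ as a silting object), the presilting object $U$ admits a completion to a silting object of $\thi_b(V)$. Since both $U$ and $V$ determine the same $\tau$-perpendicular subcategory $\Ww(\HH)$, the number of indecomposable summands satisfies $|U| = |V|$, so by \cref{maximal_silt} the object $U$ is already silting in $\thi_b(V)$, whence $\thi_b(U) = \thi_b(V)$. Finally, \cref{thickisthick} gives
\[
\HH = \thi_{[-1,0]}(U) = \thi_{[-1,0]}(V),
\]
and the right-hand side has enough projectives by construction. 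The only genuine obstacle is establishing the dual of \cref{WwTt}; once that has been set up in parallel with the injective version, the rest of the argument is a direct dualisation.
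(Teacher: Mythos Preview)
Your proposal is correct and follows precisely the approach the paper itself indicates: the paper gives no separate proof of \cref{allthickproj}, only the preceding remark that the arguments of \cref{WwTt} and \cref{allthick} dualise upon replacing left finite by right finite wide subcategories and Bongartz completion by co-completion. One small remark: \cref{perpsimple} itself needs no modification in the dual argument --- it is still $\prescript{\perp_\mathbb{Z}}{}{S}$ that is computed --- what changes is only that one works with the co-completion $T^c_U$ and the corresponding part of the simple-minded collection, so your description of that step could be streamlined.
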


\begin{corollary}\label{allthickprojinj}
		Suppose $\Lambda$ is $g$-finite. Then all thick subcategories of $\Kproj$ have enough projectives and enough injectives. 
\end{corollary}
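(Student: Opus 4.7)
The statement to prove is immediate from the two preceding theorems, so the proof will essentially be a one-line deduction. My plan is as follows.

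The approach is simply to invoke \cref{allthick} and \cref{allthickproj} in succession. Let $\HH$ be an arbitrary thick subcategory of $\Kproj$ and assume $\Lambda$ is $g$-finite. By \cref{allthick}, $\HH$ has enough injectives. By \cref{allthickproj} (the dual of \cref{allthick}, obtained by replacing left finite wide subcategories by right finite ones and thick subcategories with enough injectives by those with enough projectives, as indicated in the remark preceding it), the same $\HH$ has enough projectives. Combining both assertions for the same $\HH$ yields the corollary.

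Since there is no obstacle here beyond checking that the two theorems apply to a common thick subcategory, the only thing worth emphasizing in the write-up is that the hypothesis ``$\Lambda$ is $g$-finite'' is what allows both \cref{allthick} and \cref{allthickproj} to be invoked simultaneously on any $\HH \in \thi \Lambda$. The proof will therefore be a two-sentence paragraph with appropriate cross-references, and requires no further lemmas or computations.
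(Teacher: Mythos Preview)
Your proposal is correct and matches the paper's approach: the corollary is stated without proof immediately after \cref{allthick} and \cref{allthickproj}, so it is intended as the trivial conjunction of those two results under the common hypothesis that $\Lambda$ is $g$-finite. Nothing further is needed.
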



\bibliographystyle{alpha}
\bibliography{Biblio}

\end{document}